\documentclass[11pt]    {article}        
\usepackage{amsmath,amsthm,amsfonts,graphicx}
\usepackage{multirow}
\usepackage[T1]{fontenc} 
\usepackage{amsmath,amsthm,amsfonts,graphicx}        
\usepackage{tikz}
\usepackage{caption}
\usepackage{subcaption}
\def\smallddots{\mathinner{\raise7pt\hbox{.}\raise4pt\hbox{.}\raise1pt\hbox{.}}} 
\def\smallsdots{\mathinner{\raise1pt\hbox{.}\raise4pt\hbox{.}\raise7pt\hbox{.}}}

\DeclareMathOperator{\diag}{diag}

\newtheorem{theorem}{Theorem}[section]

\numberwithin{equation}{section} 
\numberwithin{table}{section}
\newtheorem{lemma}{Lemma}[section]

\newtheorem{corollary}{Corollary}[section] 
\newtheorem{proposition}{Proposition}[section]

\newtheorem{algorithm}{Algorithm}[section]
\newtheorem{example}{Example}[section]
\newtheorem{definition}{Definition}[section]

\newtheorem{observation}{Observation}[section]

\newtheorem{remark}{Remark}[section]

\setlength{\textwidth}{6.5 in}
\setlength{\textheight}{9.0 in} 
\setlength{\oddsidemargin}{0.0 in}
\setlength{\topmargin}{-0.5 in} 
  
%------------------------------------------------------------------------------

%------------------------------------------------------------------------------

\begin{document}
 
\title{Polynomial Root-Finding and \\ Algebraic Eigenvalue Problem} 
\author{Victor Y. Pan} 
\author{Victor Y. Pan$^{[1,2,3],[a]}$ \\
%Oren Bassik$^{2,[b]}$, Soo Go$^{3,[c]}$,\\ Qi Luan$^{2,[d]}$, 
%and Liang Zhao$^{[1, 3],[e]}$\\
%\and\\
$^{[1]}$ Department of Computer Science \\
Lehman College of the City University of New York \\
Bronx, NY 10468 USA \\ 
$^{[2]}$ Ph.D. Program in   Mathematics and \\
$^{[3]}$ Computer Science\\
The Graduate Center of the City University of New York \\
New York, NY 10036 USA \\ 
$^{[a]}$ victor.pan@lehman.cuny.edu \\ 
http://comet.lehman.cuny.edu/vpan/  %\\
%$^{[b]}$obassik@gmail.com;
%$^{[c]}%%$sgo@gradcenter.cuny.edu\\
%$^{[d]}$ qi$\_$luan@yahoo.com;
%$^{[e]}%$Liang.Zhao1@lehman.cuny.edu \\
}
\date{}

\maketitle

%------------------------------------------------------------------------------

\begin{abstract}
Univariate polynomial root-finding has been studied for four millennia and very intensively in the last decades. Our  new {\em near-optimal} root-finders  approximate all complex zeros of a polynomial 
almost as fast as one accesses its coefficients  with the precision required for the solution within a prescribed error bound.  The root-finders work even for a {\em black box polynomial}, defined by an  oracle (black box subroutine) 
for its evaluation rather than by its coefficients. Hence they (i) support
approximation of the eigenvalues of a matrix in a record Las Vegas expected bit operation time and (ii)
are particularly fast for polynomials that can be evaluated fast 
 such as the sum of a few shifted monomials
 of the form $(x-c)^d$ or
  a Mandelbrot-like polynomial defined by a recurrence. 
 Our divide and conquer algorithm of 
ACM STOC 1995 is the  only other known near-optimal polynomial
root-finder, but it extensively uses the coefficients, is quite  involved, and has never been implemented, while according to extensive numerical experiments with standard test 
polynomials already a slower initial implementation of our new root-finders competes with user's choice package of root-finding subroutines MPSolve and supersedes it more and more significantly as the degree of a polynomial grows large.
 %\footnote{The root-finder of 1995 is deterministic,while our current ones  apply Las Vegas randomization, that is, allow  output errors but detect them at a dominated computational cost.} 
 We elaborate upon design and analysis of our algorithms, comment on their potential  heuristic acceleration, and briefly cover polynomial root-finding by means of functional iterations.
 Our techniques can be of independent interest.   
\end{abstract}
%\clearpage

\paragraph{Keywords:} polynomial root-finding, matrix eigenvalues, computational  complexity, black box polynomials, symbolic-numeric computing.
  
\paragraph{2020 Math. Subject Classification:} 
65H05, 26C10, 30C15  

%------------------------------------------------------------------------------ 

\clearpage

\noindent{\bf \Large PART I: State of the Art and  Our  Progress (Outline)}

\section{State of the Art and our Main Results}\label{s1}
 
%In this part of the paper we state our main resultsand outline our algorithms.

\subsection{Polynomial
Root-finding: Problems, Impact, and Earlier Studies}\label{prbs}
According to \cite{B40,P97,P98,MB11},
{\em univariate polynomial root-finding}, that is, the solution of a polynomial equation $p(x)=0$,  had been the  central problem  of Mathematics and Computational Mathematics for about four millennia since Sumerian times; it was still a central problem well into the 19th century. In a long way from its rudimentary level to  modern version, its study 
has been responsible for the introduction of mathematical rigor and notation as well as the concepts of irrational, negative, and complex numbers, algebraic groups, rings, and fields  etc.  

The existence of  complex roots of the equation $p(x)=0$ is called the {\em Fundamental Theorem of Algebra}. The attempts of its proof  by d'Alembert (1746),  Euler (1749), de Foncenex (1759), Lagrange (1772), Laplace (1795), and Gauss (1799) had gaps (cf. \cite{S81}), and
the first rigorous proof is due to an amateurish mathematician Argand (1806, revised in 1813). 
 
Already
 about 500 BC it was  proved in Pythagorean
school that the roots of the equation $x^2=2$
were irrational radicals, 
while even with the help of radicals  one cannot express the roots  
rationally if the degree $d$ of $p(x)$ exceeds 4,  according to Ruffini (1799,1813) and Abel (1824); moreover, one cannot do this already for specific  equations such as $x^5-x-1=0$
or $x^5-4x-2=0$,   according to Galois theory of 1830, 1832.

Nevertheless, in his  {\em constructive proof} of the Fundamental Theorem of Algebra in \cite{W24}, Herman Weyl has {\em rationally approximated}   within any fixed  positive error tolerance all roots of $p(x)=0$ lying in a fixed square in the complex plane, that is, all roots if the square is large enough.

This solved the following {\bf Problem 0} of  modern polynomial root-finding:  

{\bf INPUT:}   $d+1$ complex coefficients $p_0,p_1,\dots,p_d$
of a 
 polynomial \begin{equation}\label{eqpoly}
p=p(x):=\sum_{i=0}^dp_ix^i:=
p_d\prod_{j=1}^d(x-z_j),
\end{equation}
 a positive error tolerance $\epsilon=1/2^b>0$, and a bounded domain  $\mathbb D$ in the
complex plane, e.g., a disc or a rectangle.
 
{\bf OUTPUT:}
 approximations within
$\epsilon$
 to every zero of $p(x)$ (aka every root of $p(x)=0$)\footnote{Hereafter we freely say just ``root" or ``zero'' for short, always count them with their {\em multiplicities}, and do not distinguish multiple roots from pairwise isolated root clusters of diameters less than $\epsilon$.} lying in the domain $\mathbb D$
 such that 
\begin{equation}\label{eqrts} 
|\tilde z_j- z_j|\le \epsilon~{\rm if}~z_j\in \mathbb D.
 \end{equation}
  
\begin{example}\label{exallrts} {\rm [Approximation of all 
$d$ roots.]} For
a real \begin{equation}\label{eqR}
R\ge \max_{j=1}^d\{|x_j|\}
\end{equation}
(see \cite{H74} for simple expressions for such bound $R$) and $\mathbb D$ denoting the disc  $D(0,R):=\{x:~|x|\le R\}$,  Problem 0 amounts  to approximation of all $d$ roots $z_j$ within $\epsilon$.
\end{example}

\begin{example}\label{exrlnrrl} {\rm [Numerical real root-finding.]} Root-finding in  the 
 real line or its segment (real root-finding)  is highly important because in many applications, e.g., to optimization in algebraic geometry and geometric modeling,  
   only real roots  of a polynomial are of interest, and  typically they are much less numerous than all $d$ complex roots \cite{K43,EK95}. Now consider Problem 0 where $\mathbb D$ denotes   the minimal  rectangle
   in the complex plane   that covers the $\epsilon$-neighborhood 
 of a fixed real line segment. This task  addresses approximation of both real and {\em near-real roots}, that is,  their perturbations due to small input and rounding errors. The known real polynomial root-finders
 (cf. \cite[Chs. 2 and 3]{M07}) ignore this important issue.
\end{example}

Solution  of Problem 0 can be extended to the   eigenvalue problem for  matrices 
and  matrix polynomials  (see our Secs. \ref{srteig} and \ref{scmplests} and Part (C) of Sec. \ref{sdrctacc}) and is involved into the solution of a multivariate polynomial system of equations \cite{CLOS97}; both of them are  much studied areas   
 of modern computations.
\cite[Introduction]{MP13} specifies some applications of polynomial root-finding to  
computer algebra, signal processing, geometric modeling, control, and financial mathematics. Furthermore, Problem  0 for a domain $\mathbb D$ containing all roots is closely linked to polynomial factorization (see Secs. \ref{scprtf} and \ref{4.15}), which has applications to  
time series analysis, Wiener filtering, noise variance estimation, co-variance matrix
 computation, and the study of multichannel systems \cite{W69,BJ15,B83,DM89,DM90,VD94}.

 Hundreds of efficient polynomial root-finders have been proposed  \cite{M07,MP13}
 and keep appearing
  (see   \cite{PT13/16,BR14,EPT14,KRS16,LV16,PZ17,P17b,BSSXY16,BSSY18,IPY18,P19,P20,LPKZ20,IP20,IP21,M21,IP22,IM23,PGLZ23}, and the bibliography therein).  In \cite{S81}  Smale proposed  
to study Problem 0 in terms of its arithmetic complexity and  computational precision. Together they would define its bit operation complexity, but Smale deduced no complexity estimates.  \cite{S82}    Sch{\"o}nhage did this in \cite{S82}. His much involved 
root-finder, presented with terse complexity analysis on  46 pages, approximates all $d$ roots by using 
$\tilde O(bd^3)$ bit operations, that is, 
$bd^3$ bit operations
up to a  factor in $bd$.\footnote{Error amplification factors $2^{cd}$ are inevitable 
in polynomial root-finding, e.g., in routine shift of the variable, and assume that $b\ge d$
throughout our paper; 
alternatively one can replace $b$ with $b+d$
in our upper bounds on the computational cost.} 

His work prompted intensive  study  of root-finding by researchers in the Theory of Computing 
 in the 1980s and 1990s,
%(in particular by  Sch{\"o}nhage and Neff and Reif \cite{S82,NR94}) 
 culminated at ACM STOC 1995  with our {\em near-optimal}   
 divide and conquer root-finder
of \cite{P95}
 (also see its journal versions \cite{P96,P02} and surveys \cite{P97,P98}). Both upper
 bound in $\tilde O(bd^2)$ and
  lower bound of $0.25(d+1)bd$ bit operations of \cite{P95,P02} decrease
  by a factor of $d$ 
 for  polynomials 
 of Def. \ref{defclstr}
of Sec. \ref{scprtf}, whose all zeros and tiny clusters of zeros are pairwise sufficiently well isolated
(see Thms. \ref{thbscfct}   and \ref{thfrmfctr} 
 and Cor.  \ref{cofrmfctr0} in Sec. \ref{scprtf}). 
 
%\noindent 
 %\smallskip 
\subsection{Reduction to the case of the unit disc}\label{srdcttountdsc}

 Hereafter 
  $D(c,\rho):=\{x:~ |x-c|\le \rho\}$ and
 $C(c,\rho):=\{x:~ |x-c|=\rho\}$  denote the disc of radius $\rho>0$ centered at a complex point  $c$ and its boundary circle, respectively, and
{\bf Problem $\widehat 0$} denotes Problem 0 where the domain $\mathbb D$
is the unit disc $D(0,1)$; we can extend its solution  to root-finding in the disc  $D(c,\rho)$ for any pair of complex $c$ and positive $\rho$ by means of applying the linear map 
 \begin{equation}\label{eqshft}
x\mapsto   
\frac{x-c}{\rho},~p(x)\mapsto  t(x)=p\Big(\frac{x-c}{\rho}\Big),~D(c,\rho)\mapsto  D(0,1),
\end{equation}
which shifts the variable $x$ by 
$c$ and then scales $x-c$ by $1/\rho$. 

Next we reduce Problem 0  for any domain $\mathbb D$ to Problem $\widehat 0$.

{\bf (i) Two ways from the roots in the unit disc to all roots.} 
Given a value $R$ of 
 (\ref{eqR}), we can scale the variable, $x\mapsto y=\frac{x}{R}$, $p(x)\mapsto p(y)=p(\frac{x}{R})$, and then  
 approximate every root $x_j$, for
 $j=1,\dots,d$,  within
   $\epsilon R$ by solving Problem $\widehat 0$ for $p(y)$.
   
 Here is Sch{\"o}nhage's alternative reduction 
 in \cite{S82}  of Problem 0  for all $d$ roots to Problem $\widehat 0$: 
  \noindent Given an algorithm $\mathcal A$ that
solves Problem $\widehat 0$,  compute also approximations $\tilde y_j$ within $\epsilon$ to the reciprocals $y_j=1/z_j$ of all roots lying outside the disc $D(0,1)$  --  simply by applying this algorithm  to the
reverse polynomial
 \begin{equation}\label{eqpolyrev}
p_{\rm rev}(x):=x^dp\Big(\frac{1}{x}\Big)=\sum_{i=0}^dp_ix^{d-i},~
p_{\rm rev}(x)=p_0\prod_{j=1}^d\Big(x-\frac{1}{z_j}\Big)~{\rm if}~p_0\neq 0. 
\end{equation}
Write  $\Delta_j:=\tilde y_j-y_j$ and deduce that
$\frac{1}{y_j}-\frac{1}{\tilde y_j}=\frac{\Delta_j}{\tilde y_j~y_j}$, and so
$\frac{\tilde y_j}{y_j}-1=
\frac{\Delta_j}{y_j}=\Delta_jz_j$. 
Recall that $|\Delta_j|\le \epsilon$
by assumption 
and obtain that
$|\frac{\tilde y_j}{y_j}-1|\le \epsilon |z_j|$, thus arriving at the following

\begin{proposition}\label{proprev} 
The reciprocal of an approximation within $\epsilon$ to a zero of $p_{\rm rev}(x)$  
in the disc $D(0,1)$
approximates the associated zero  of $p(x)$ lying outside  
that disc within a relative error at most $\epsilon$.\end{proposition}

{\bf (ii) From roots in
  the unit disc to
  roots  in a bounded domain.} Given a bounded domain $\mathbb D$
 on the complex plane,
 we can successively (a) cover it with a minimal disc  $D(c,\rho)$,
(b) map this disc into the disc $D(0,1)$, (c) solve Problem $\widehat 0$ for the resulting polynomial $t(x)$ of (\ref{eqshft}), and finally (d) use the map (\ref{eqshft}) again -- this time to  approximate all roots lying in   the intersection of the disc  $D(c,\rho)$ with the domain $\mathbb D$. In this way we also approximate extraneous roots lying in $D(c,\rho)$ but not in  $\mathbb D$. In Sec. \ref{ssbdrdei} we curtail such an extraneous work 
(see  Prop. \ref{propmk} and Remark \ref{remk}).

\subsection{Black box root-finders and approximation of matrix eigenvalues}\label{srteig}
 
   A nontrivial variant of Newton's iterations by Louis and Vempala  at FOCS 2016 \cite{LV16} approximates within  
a fixed positive
 $\epsilon\le 1/2$
 an absolutely largest zero of a real-rooted  polynomial 
$p$ of a degree $d$  
at the  cost of its  evaluation with a precision in $O((b+\log(R))\log(R~d)))$
 at $O((b+\log(R))\log(d))$ points for $b:=\log_2(R/\epsilon)$
 and a fixed  $R$ satisfying (\ref{eqR}).   While the  previous polynomial root-finders have been devised for polynomials given in the {\em monomial basis}, that is, with their coefficients,  Louis and Vempala in \cite{LV16} never involve them, working for a {\em black box polynomial} $p$ -- given with an  {\em  oracle} (black box  subroutine) for its evaluation, and then    extend their root-finder to efficient approximation of the eigenvalues of a matrix.
Next we state the relevant tasks.\\
{\bf Problem 1 (Black Box Polynomial Root Approximation)} is Problem 0 for a black box polynomial $p$.
 \smallskip 

\noindent{\bf Problem 2: Approximation of  the Eigenvalues of a Matrix.}\footnote{As in \cite{LV16}, the problem covers or can be extended from a matrix to a {\em polynomial matrix}, aka {\em matrix polynomial}.}\\
INPUT:  $\epsilon=
1/2^{\tilde b}>0$, an  integer $d>0$,  a $d\times d$  matrix $M$, and a domain $\mathbb D$
on the  complex plane.
 \\
 OUTPUT: 
 Approximations 
  within $\epsilon$ to  every eigenvalue of $M$ 
lying in the domain $\mathbb D$.

 The output of Problems 0, 1, and 2  may include multiple approximations 
within $\epsilon$ to the same root or eigenvalue, but the next modifications 
exclude such a  confusion.
 
\noindent{\bf Problems $0^*$, $1^*$, $2^*$:  Isolation of Clusters of Roots and    Eigenvalues.} 
Under the assumptions of 
Problems 0, 1, and 2 compute at most $d$ disjoint discs of radii at most
 $\epsilon$ (hereafter $\epsilon$-{\em discs}) that cover all roots or eigenvalues lying in the domain $\mathbb D$;
 also compute the number of the roots or eigenvalues lying in each $\epsilon$-disc. 
 
 \begin{remark}\label{reclsttrisl} If  we collapse all zeros of $p$ lying in every $\epsilon$-disc into a single multiple zero  placed in the center of the disc, then these zeros will lie at the distances 
  greater than $2\epsilon$
  from each other.
  If a polynomial $p$ is 
 square-free (having only simple zeros) and has integer coefficients, then Problem 0$^*$ turns into the well-known Problem of Root-isolation (see Problem ISOL and Remark \ref{rertisl} in Sec. \ref{sisol}).  These comments can be immediately extended to Problems  1$^*$ and 2$^*$.
\end{remark}
  
\subsection{More on  Black Box Polynomial Root-finders}\label{smbb}
\subsubsection{Why we should use them}\label{swhy} 
 
%\noindent (v) By allowing to involve the coefficients of a polynomial one can greatly accelerate its multipoint evaluation, which partly {\em counters the benefits of black box algorithms}.
 Namely, the algorithm of
Fiduccia \cite{F72}
 and  Moenck and Borodin
 \cite{MB72} (see \cite[Sec. 4.5]{BM75} for  historical account) evaluates a polynomial of degree  $m$ at $m$ points in  $O(m\log^2(m))$ ops if its coefficients are given, versus $2m^2$ ops involved in Horner's algorithms. Numerically stable algorithms of \cite{M21,IM23} yield such acceleration also in terms of bit operation cost.

One can perform black box 
evaluation at $q>d$ points $x_1,\dots,x_q$ and then interpolate to $p$.
The bit operation
 cost of interpolation, however,
 is unbounded unless
the $q$ points are well isolated from  the roots, but it is a challenge to yield such isolation.
Furthermore, even if the isolation is ensured, 
the cost of  interpolation 
 exceeds the cost of the solution of Problems 0, 0$^*$,
 1, 1$^*$, 2, and 2$^*$ in the cases where domain $\mathbb D$  contains only a small number of roots as well as where a polynomial $p$ can be evaluated fast because root-finding for a polynomial is routinely reduced to its evaluation at sufficiently many points \cite{M07,MP13}.  
 
 For example, one only needs $O(\log(d))$  ops for evaluation of
 a Mandelbrot-like polynomial
$p:=M_k(x)$, where 
$M_0:=x,~
M_{i+1}(x):=xM_i^2(x)+1$, $i=0,1,\dots,k$, $d=2^k-1$, or   the sum of a bounded number of shifted
monomials, such as $p:=\alpha (x-a)^d+\beta(x-b)^{d}+\gamma(x-c)^{d}$ for six constants $a$, $b$, $c$, $\alpha$, $\beta$, and $\gamma$.
For comparison, if we compute the coefficients
of such a polynomial and then use them  to  evaluate it at  a point $x$ by applying Horner's algorithm, then we would need  $2d$ 
ops, and this bound is optimal (see \cite{P66,K81/97} or
\cite[Section ``Pan's  method"]{S74}). 

In computational practice numerical stability of polynomial evaluation 
 is highly important,
 and it is ensured when  a polynomial
 is represented in the Bernstein basis  \cite{L53}. Transition to it from the monomial basis is costly but is not needed where a root-finder works for a black box polynomial. 
 
One can benefit from black box root-finders   even where an input polynomial is  given with its coefficients.

(i) Application of a black box root-finder to the characteristic polynomial  $p(x)=\det(xI-M)$
of a $d\times d$ matrix  $M$  only involves the values of this polynomial but not its coefficients,
whose overall size exceeds the size (a norm)
of the matrix $M$ by a factor of $d$. Hence extension of such a root-finder to Problem 2 saves 
a factor of $d$ in  bit precision  and bit operation  count versus the known solutions of Problems 2  and 2$^*$, based on \cite[Sec. 21]{S82}. 

 (ii) Map (\ref{eqshft}) 
 is a valuable  
 root-finding tool, but its application is limited because it destroys sparsity and blows up computational precision and bit operation cost. Similar comments can be applied to maps (\ref{eqdndgnr})  and (\ref{eqdnd})  
of Sec. \ref{seiIII}.
   %for all $i$,
   Black box polynomial root-finders keep all benefits of using those maps but avoid 
    the latter notorious  problems by  involving no coefficients of  $t(x)$ and  $p_{h}(x)$
  of these maps.
  
   (iii) Deflation of a factor of a polynomial is a well-known cause of the coefficient swell, e.g., for degree 300  factors of 
 $x^{1000}+1$, which blows up  precision and bit complexity of computations.   For an exception,  one can readily deflate a small degree factor $f(x)$ and approximate its zeros, but the coefficients of   the high degree factor $g(x)=p(x)/f(x)$ still tend to blow up. We, however, avoid their computation when we apply a black box
   root-finder to the polynomial $g(x)$ represented by the pair of $p(x)$ and $f(x)$.

\subsubsection{Progress at FOCS 2016 and its limitations}\label{sadtd}
          The known polynomial root-finders, except \cite{LV16}, extensively use the coefficients of $p$, 
and we must be careful 
in  adopting their techniques for Problems 1 and 1$^*$.

Besides proposing novel root-finder for  a black box polynomial with extension to approximation of an eigenvalue of a matrix, Louis and Vempala in \cite{LV16} presented non-trivial techniques for bounding precision of root-finding computations. Extension of these techniques helped us simplify dramatically the estimation of the bit operation complexity of our root-finders compared to those of
 \cite{S82,P95,P02,K98}, which bypass estimation of computational precision.

On the other hand, direct extension of the root-finder of
\cite{LV16}  cannot help solve even Problem 1.  Indeed, 
 repeatedly deflating $p$, by
 peeling off large roots, can accumulate errors that grow too quickly, together with the coefficient length  of the factors: notice, e.g., the swell of the coefficients of $p(x)=x^d+1$ in transition to its factors of degree $d/2$, say. This is true
 even if one performs deflation in the black box setting implicitly, based on Eqn.  (\ref{eqimpldfl})
 of Sec. \ref{sseitsts}. Indeed,  the high order derivatives involved in 
 the algorithm of \cite{LV16} are
  proportional to the
  coefficients and together with them 
  swell in deflation.
  
 Even if deflation succeeded, the algorithm of \cite{LV16}  would have only solved the well-known and much easier problem of root-finding for a real-rooted polynomial (cf.   \cite{P89,BT90,DJLZ97}).
 Moreover, the resulting algorithm for that much easier problem would involve more than $b^2d^2$ bit operations, exceeding by a factor of $b$  the upper bound of \cite{P95,P02} for  general polynomial 
root-finding.

Furthermore, we accelerate the solution of Problem 0 by a factor of $m/\log^2(m)$ by incorporating 
fast multipoint polynomial
evaluation of \cite{F72,MB72,K98,M21},
but we would lose this chance
if we  successively approximated the zeros of $p$  or  matrix eigenvalues by using deflation.  
  
  Thus, instead of applying  
   high order Newton's iterations of 
  \cite{LV16} and deflation, we recall and advance subdivision iterations of \cite{W24,H74,R87,P00}.

  \subsection{Our complexity estimates}\label{scmplests}
  
 Unlike the root-finders of \cite{S82,P95,P02,K98},
 seeking all $d$ roots, of \cite{LV16},
 seeking a single absolutely largest root and of  \cite{P95,P02}, seeking all $d$ roots or all roots in a disc well isolated from the external roots, {\em we approximate the roots  in any domain
 $\mathbb D$  in the complex plane.} 
Root approximation within $\epsilon$ in a domain $\mathbb D$   involves the roots in  
  its $\epsilon$-neighborhood; our complexity estimates grows with the number of roots in a   little larger but very small  neighborhood of $\mathbb D$.
 
 \begin{definition}\label{defepsbt}
 For a polynomial $p=p(x)$,
 a  domain $\mathbb D$
 in the complex plane
  having a diameter $\Delta$, and a positive $\beta$ such that $\beta\Delta>2\epsilon$, write $m=m(p,\mathbb D,\beta)$
  if precisely $m$ roots of $p(x)$  lie in 
 the  $\beta\Delta$-neighborhood of 
 $\mathbb D$. 
 \end{definition}
  
We will prove our Main Theorems 1-4 below  for
$m$ of Def. \ref{defepsbt},  any fixed constant $\beta>2\epsilon/\Delta$, and either  $p(x)=\det(xI-M)$ in the case of Problems 2 and 2$^*$ or  $p(x)$ of
(\ref{eqpoly}) otherwise.\footnote{Hereafter, whenever we say  that subdivision iterations are applied to a input domain $\mathbb D$ of 
 diameter $\Delta$ that contains $m$
roots, we mean that so does its $\beta$-neighborhood.}
  
\begin{theorem}\label{thhlacovrl0} {\bf (Main Theorem 1.)}        
One can solve Problems 1 and 1$^*$ by querying the polynomial evaluation oracle at 
$\tilde O(m^2)$ locations with a computational precision  $O(b\log(d))$.
\end{theorem} 
 
\begin{theorem}\label{theigen} {\bf (Main Theorem 2.)}
One can solve Problems 2 and 2$^*$ at a Las Vegas randomized
 bit operation cost
 \begin{equation}\label{eqblneig}
 \mathbb B_{\rm eigen}=\tilde O(\tilde b d^{\omega}m^2)
 \end{equation} 
  for $\tilde b:=\log_2(||M||_F/\epsilon)$,
  the Frobenius norm $||M||_F$ of a  matrix $M$, and $\omega$  denoting an exponent of {\em feasible} or {\em unfeasible} matrix multiplication
(MM).\footnote{The current records are about 2.7734 for exponents of  
   feasible MM \cite{P82,P17a,P18,KS20,HS23}, unbeaten since 1982, and about 2.372 for exponents of   unfeasible MM \cite{WXXZ24}.
 The  exponents below 2.77 have only been reached at the expense of blowing up the  size of MM by  immense factors.}   
\end{theorem}  

%------------------------------------------------------------------------------- 
 
  \begin{theorem}\label{thblntm} {\bf (Main Theorem 3.)}
  One can solve Problems 0 and 0$^*$ at a Las Vegas randomized
 bit operation cost $\tilde O(bdm))$
for a polynomial $p$ of 
  (\ref{eqpoly})
 normalized such that 
 $\sum_{i=1}^d|p_i|=1$.
\end{theorem}
 
   {\bf Random Root Model}: The zeros of $p(x)$, for
   $p(x)=\det(xI-M)$ in Problems 2 and 2$^*$ and  for $p(x)$ of 
 (\ref{eqpoly}) otherwise, are  independent identically distributed {\em (iid)}  random variables
    sampled under the uniform probability distribution on the disc
    $D(0,R)$ 
    for a large real $R$ such that
    the value $\frac{R^2}{d}-1$
    exceeds a positive constant.\footnote{(i) The  assumption of the model is a little stronger  than that of our Def. \ref{defclstr}, supporting such a decrease  in \cite{P95,P02}. (ii) Random coefficient models fails at the basic step of the 
shift of the variable $x$, which no known probability distribution withstands.}
    \begin{theorem}\label{thrndrtmd} {\bf (Main Theorem 4.)}
     Under the Random Root Model one can decrease by a factor of $m$ the upper bounds of  Thms.  \ref{thhlacovrl0}-\ref{thblntm} on the computational precision and bit operation complexity.
\end{theorem}
    
In view of  Remarks \ref{reclsttrisl} and \ref{rertisl} we can readily extend our Main Theorems  to {\em Polynomial Root  Isolation.}
    
%------------------------------------------------------------------------------- 
%------------------------------------------------------------------------------- 
 
\section{Fast subdivision  
root-finding: overview and soft e/i tests}\label{sbckbrf}

We support  Main Theorems 1 -- 4 with
our accelerated subdivision  
root-finders. They are  reduced to recursive application of soft exclusion/inclusion (e/i) tests, which  decide whether  a fixed complex disc or its small neighborhood
contains roots.
  In Sec. \ref{sseitsts} we outline our  two approaches 
 to devising e/i tests for a black box polynomial and the unit disc  $\mathcal D=D(0,1)$. We expand these two outlines 
  in  Part II,  made up of Secs. \ref{sbcgr}--\ref{sblnrtfdg}, and Part III, made up of Secs. \ref{sbdef}--\ref{sblnrtfnd}, respectively. 
 Our root-finders of Parts II and III
 only approximate (rather than isolate)
 the roots and support    complexity estimates  that are inferior 
 to those of our Main Theorems  by a factor of $b$.
  In Part IV, made up of Secs. \ref{sextrrk} -- \ref{sextr} and outlined in Sec. \ref{saccIV}, we fix both deficiencies, thus   
 proving the Main Theorems.  
 In Sec. \ref{sorg} we further comment about the contents of our paper, including its Part V,
made up of Secs. \ref{scprtf} -- \ref{sexclexc2}. 

\subsection{Subdivision root-finders and reduction to e/i tests}\label{ssbdrdei}

We depart from  popular subdivision
 root-finders,
 traced back to \cite{W24,H74,R87,P00}. They extend root-finding in a line segment by means of bisection 
 to polynomial root-finding in a square on the complex plane and have been 
extensively used in Computational 
  Geometry under the name of {\em Quad-tree Construction}. 
 Next we outline them.

(A) First consider  root-finding in a square on the complex plane.
 Call that square {\em suspect}, recursively  divide it and every other suspect square into four congruent sub-squares, and to each of them apply {\em exclusion/inclusion (e/i)} test to decide whether the  square   contains a root or does not. If it does,  call the square  suspect and process it in the next iteration;  otherwise discard it
(see Fig. \ref{figsbd}a).
 In Def. \ref{defsft}                                                                                                                      
  we generalize e/i tests to $\ell$-{\em tests}, which decide whether a disc in the complex plane contains  $\ell$ or fewer roots provided that it contains at most $m$ roots.  E/i test is a 1-tests, and whenever we need $\ell$-tests for $\ell>1$ (e.g., in Sec. \ref{sextrrk} and Part 
IV), we reduce them to e/i tests (see
 Sec. \ref{smvia1} and \ref{seiann}).

\begin{figure} 
\centering
\begin{subfigure}[b]{0.4\textwidth}
\centering
\resizebox{!}{0.15\textheight}{
\begin{tikzpicture}
\draw (-4, -4) -- (-4, 4) -- (4, 4) -- (4, -4) -- (-4, -4);
\draw (-4, 0) -- (4, 0);
\draw (0, -4) -- (0, 4);
\draw (-4, 2) -- (0, 2);
\draw (0, -2) -- (4, -2);
\draw (-2, 4) -- (-2, 0);
\draw (2, 0) -- (2, -4);
\draw (-1, 4) -- (-1, 2);
\draw (-2, 3) -- (0, 3); 
\draw (0, -3) -- (2, -3);
\draw (1, -2) -- (1, -4);
\draw node at (-1.2, 2.6) {*};
\draw node at (-1.7, 2.2) {*};
\draw node at (1.3, -2.6) {*};
\draw node at (1.7, -2.3) {*};
\end{tikzpicture}}
\caption{Four roots marked by asterisks lie in  suspect sub-squares; the other (empty) sub-squares are discarded.}
\end{subfigure}
\hfill
\begin{subfigure}[b]{0.4\textwidth}
\centering
\resizebox{!}{0.2\textheight}{
\begin{tikzpicture}
\draw[red] (0, 0) circle (1.414);
\draw (-1, 1) -- (1, 1) -- (1, -1) -- (-1, -1) -- (-1, 1);
\draw[red] (0, 0) circle (1); 
\draw (-0.707, 0.707) -- (0.707, 0.707) -- (0.707, -0.707) -- (-0.707, -0.707) -- (-0.707, 0.707);
\draw[red] (0, 0) circle (0.01); 
\draw node at (-1.2, 0) {*};
\draw node at (0, 1.2) {*};
\draw node at (0.9, -0.9) {*};
\end{tikzpicture}}
\caption{Both    
 exclusion and $\sqrt 2$-soft inclusion criteria hold for  e/i test applied to the disc bounded by  internal  circle.}     
\end{subfigure}
\caption{}
\label{figsbd}
\end{figure} 
     
   \begin{observation}\label{obsubsid}
A subdivision iteration applied to a square  with $m$ roots in its $\beta$-neighborhood 
(cf. Def. \ref{defepsbt}  and Remark \ref{remk})
(i)  decreases by twice the diameter  of a suspect square and 
   (ii) processes at most $4m$ suspect squares, (iii) whose centers approximate every root within the distance of one half of their diameter.                                                                                                                                                       
\end{observation}
 
\begin{proof}
Readily verify claim (i). Claim (ii) holds because a root can lie  in at most  four suspect squares, and even in at most two unless it is the shared vertex of the four. Claim (iii) holds because a subdivision iteration never discards a square containing a root.
\end{proof}

\begin{corollary}\label{corsubsid}
Let  subdivision iterations
be applied to a square $S$ 
with side length $\Delta$ and diameter  $\Delta':=\Delta\sqrt 2$ containing precisely $m$ roots. Then  (i)  the $k$-th iteration  outputs at most $4m$ suspect squares whose centers  approximate within  $\epsilon:=\Delta'/2^{k+1}$ all the $m$ roots lying in $S$, so that $k+1\le
\lceil\log_2(\frac{\Delta'}{\epsilon})\rceil$, and (ii) the first
$k$ iterations together involve at most  $4mk$ e/i tests.
\end{corollary}
 
 (B) Seeking roots in any fixed complex domain  $\mathbb D$,
we apply subdivision iterations to its minimal covering square
$S$ with diameter $\Delta'$ and while performing an iteration also discard all squares not intersecting the domain $\mathbb D$, even if they contain roots. The following proposition bounds the overall number of e/i tests performed in $k$ iterations.

\begin{proposition}\label{propmk}
Let $k$ subdivision iterations be applied to a complex domain $\mathbb D$, let precisely  $m_i$  roots lie in the $\Delta_i'$-neighborhood of $\mathbb D$
for  $\Delta'_i$ denoting the diameter of a suspect square at the $i$-th subdivision iteration,
 and let precisely $m_i$ e/i tests be performed at the $i$-thiteration, so that
\begin{equation}\label{eqmi}
 m_i\le 4^{i+1},~i=0,1,\dots,k-1.
\end{equation}  
 
Then 

(i) the $(\Delta_h')$-neighborhood of $\mathbb D$  covers  the union 
%$\mathbb U_h$
of all suspect squares processed in the $h$-th
subdivision iteration, while $\Delta_h'=\Delta_0'/2^h$ fast converges to 0 as $h$ increases; 
  in particular, for any fixed $\beta$ of Def. \ref{defepsbt} we have  $\Delta'_h\le 0.5\beta\Delta'_0$ 
already for $h=O(1)$.

(ii) The first $h$ iterations involve at most $ \frac{4^{h+1}}{3}$ e/i tests, that is, $O(1)$ tests if $h=O(1)$.
\end{proposition}
\begin{proof}
We only prove claim (ii).   Eqn. (\ref{eqmi})
implies that
$\sum_{i=0}^{h-1} m_i\le \sum_{i=0}^{h-1}
4^{i+1}=\frac{4^{h+1}}{3}$. 
\end{proof}
\begin{remark}\label{remk} 
Every root is defined up to within $\epsilon$, and 
so  at the $h$-th subdivision iteration we decide whether we should discard a square or call it suspect and keep subdividing it depending on the roots that lie  in  the $(\Delta_h'+\epsilon)$-neighborhood of $\mathbb D$. 
 Def. \ref{defepsbt}  for $\Delta:=\Delta_0'$ implies that   $\Delta'_h\epsilon\le \beta\Delta$ already for some $h=O(1)$, and so,  the outputs of all e/i tests, except for  $h=O(1)$ initial ones, depend only  on the roots  that lie in the $\beta \Delta$-neighborhood of $\mathbb D$.
\end{remark}

%\begin{figure}

%\caption{Subdivision root-finding in a triangle.}
%\label{fig2}
%\end{figure} 

\begin{remark}\label{rerlrtf} {\em [Real and near-real root-finding.]}
In the case of Problem 0 of  Example \ref{exrlnrrl},  {\em generalize bisection root-finding} as follows:
 (i) recursively partition (subdivide) the real line segment defining  every suspect rectangle into two halves and accordingly subdivide the rectangle into two congruent
 rectangles; to each  of them apply e/i test, and then discard a rectangle if it contains no roots but call it suspect and  subdivide it otherwise, (ii) stop where a rectangle has diameter  at most $2\epsilon$, so that its center approximates within $\epsilon$ all roots lying in it. 
\end{remark}

\subsection{Subdivision iterations with soft e/i tests}\label{ssftei}
  
Instead of testing whether a suspect square $S$ contains some roots, Henrici in \cite{H74} tested whether the  minimal disc $D(c,\rho)$  covering $S$ does this. 
 Furthermore, in 
the presence of rounding errors no  e/i test can decide whether a root lies in or outside a disc if it actually lies on or very close to its boundary circle. We address both problems by fixing $\sigma>1$ and applying {\em soft e/i 
tests}, which are a special case for $\ell=1$ of the following more general  soft $\ell$-tests.\footnote{Besides 1-tests (that is, e/i tests) we only use $\ell$-tests for sub-discs of  discs containing at most $\ell$ roots and well-isolated from the external roots.}
 
\begin{definition}\label{defsft}                                                                                                                      
 For  $p(x)$ of (\ref{eqpoly}), 
 $\sigma>1$, and an integer $\ell$, $1\le \ell\le d$,
a $\sigma$-{\bf soft} $\ell$-{\bf test}, or just   $\ell$-{\bf test} for short (in particular an e/i test turns into a 1-test),
 %$\mathbb T(\ell,\sigma,m)$
either outputs 1 and stops if it detects that $r_{d-\ell+1}\le \sigma$, that is, $\#(D(0,\sigma))\ge\ell$, or outputs 0 and stops if it detects that $r_{d-\ell+1}> 1$, that is,\footnote{Bounds $r_{d-\ell+1}\le \sigma$ and $r_{d-\ell+1}>1$ can hold simultaneously (cf. Fig. \ref{figsbd}b), but as soon as an $\ell$-test  verifies any of them, it stops without checking if the other bound also holds.}   $\#(D(0,1))< \ell$. An
$\ell$-test applied to the polynomial $t(y)$ of (\ref{eqshft}) is said to be %rather than  $p(x)$,
an  $\ell$-{\em test for the disc} $D(c,\rho)$ and/or {\em the circle} $C(c,\rho)$ and also to be an $\ell$-{\em test}$_{c,\rho}$. 
\end{definition} 
  
\begin{observation}\label{obsgmcvrs}
Let $\gamma(\sigma)$
denote the maximal number of $\sigma$-covers at a fixed subdivision step
that share a complex point, e.g., a root. Then $\gamma(\sigma)\le 4$
if $\sigma<\sqrt 2$ and  $\gamma(\sigma)$ is in $O(1)$ if so is $\sigma$.  
\end{observation}

\begin{corollary}\label{cosgmz}
Cor. \ref{corsubsid} and the observation of Remark \ref{remk} still hold if e/i tests are replaced with $\sigma$-soft e/i tests for  $\sigma<\sqrt                                                                                                         2$, while a subdivision root-finder using $\sigma$-soft e/i  tests  involves   
$O(km)$  such tests
if $\sigma$ is a constant.
\end{corollary}

\section{Fast soft e/i tests}\label{sseitsts}
 
\subsection{Background: Newton's Inverse Ratio}\label{snir} 
 
 We reduce e/i tests of
 subdivision root-finders to  multipoint evaluation   of the ratio 
 \begin{equation}\label{eqratio}
{\rm NIR}(x)=\frac{p'(x)}{p(x)}=\sum_{j=1}^d\frac{1}{x-z_j},
\end{equation}
said to be
{\em Newton's inverse ratio}.
To prove the latter well-known equation, differentiate  
$p(x)=p_d\prod_{j=1}^d(x-z_j)$.

 The ops involved in our evaluation of  NIR$(x)$ are more numerous than the other ops involved in our  root-finders, even in the case where $p(x)=x^d$.

By following \cite{LV16}
we fix a complex 
 $\delta$, for $|\delta|$ small in context, 
  and approximate $p'(x)$ with the first order divided difference
$\frac{p(x)-p(x-\delta)}{\delta}$, 
 equal to $p'(y)$ for some $y\in [x-\delta,x]$,
by virtue of
 Taylor-Lagrange's formula. Hence
  $y\mapsto x$ and
  $p'(y)\mapsto p'(x)$
  as $\delta\mapsto 0$,
  and  
    
\begin{equation}\label{eqNRNIR} 
{\rm NIR}(x)\approx\frac{p(x)-p(x-\delta)}{p(x) \delta}=\frac{p'(y)}{p(x)}~{\rm for~some}~y\in [x-\delta,x].  
 \end{equation}

%------------------------------------------------------------------------------
    
\subsection{Background: error detection and correction} \label{smrerrdtc}
 
A randomized root-finder  can lose some roots, although with a low probability, but we can
 detect such a loss at the end of  root-finding process, simply by observing that among the $m\le d$ roots lying in an input disc $D$
  only $m-w$ {\em tame}
  roots, say, $z_{w+1},z_{w+1},\dots,z_{m}$ have been closely 
approximated,\footnote{We call a complex $c$  a tame root for an error tolerance $\epsilon$ if $c$ lies in an isolated  disc $D(c,\epsilon)$, whose index 
$\#(D(c,\epsilon))$ we can compute fast by applying Thm. \ref{thpwrsm0} or the algorithm in  Sec.   \ref{secmprstris}.}  while $w>0$ {\em wild} roots,  say, $z_{1},z_{2},\dots,z_{w}$ remain at large. 
We can compute the coefficients of the factor $g(x):=\frac{p(x)}{f(x)}$ for $f(x):= \prod_{j=w+1}^m(x-z_j)$,
say,  by applying  evaluation-interpolation techniques, then apply to $g(x)$ a black box root-finder, and repeat this computation recursively
to approximate the $w$  wild roots.\footnote{By this recipe  we detect any output error at the very end of computations in any root-finder that can only lose roots but cannot wrongly generate them.  Our subdivision root-finders belong to this class, but with them we can detect the loss of a root even earlier -- whenever in a subdivision process applied to a square with $m$ roots we detect that  $m$ exceeds the sum of the indices of all suspect squares, that is, the overall number of roots contained in  them.} We 
readily  bound  the complexity and error probability
of this recursive process (although we will not use these estimates in the following): 
 
\begin{proposition}\label{thrtfrrdtccrc11}
Let the above  recipe for error  detection and  correction be applied to
 the output
of a  randomized root-finder %$\mathcal A$
performing  at the cost $\alpha$  
 (under a fixed model),  let it be correct
with a probability $1/\beta$ for $\beta>1$ and let  the  algorithm be applied recursively, each time  with error detection
at the cost $\delta$. Then in $v$ recursive applications, the output is certified
to be correct at the cost of less than $(\alpha+\delta)v$
with a probability at least
$1-1/\beta^v$. 
\end{proposition}

Explicit deflation can  blow up the coefficient length and 
destroy sparsity unless the degree $w$ is  sufficiently small, but 
 we avoid these mishaps  by combining a fixed black box polynomial root-finder with implicit 
deflation (cf. example (iii) in Sec. \ref{swhy}), that is,  by applying that root-finder  to the quotient polynomial $q(x)$ without computing its coefficients -- based on  the following expression,
implied by (\ref{eqratio}):
\begin{equation}\label{eqimpldfl} 
 \frac{q'(x)}{q(x)}=
\frac{p'(x)}{p(x)}-\frac{t'(x)}{t(x)}=\frac{p'(x)}{p(x)}-
\sum_{j=w+1}^{m}\frac{1}{x-z_j}.
\end{equation}
\begin{remark}\label{refmm}
One can accelerate low precision numerical approximation of the sum 
$\sum_{j=w+1}^{m}\frac{1}{x-z_j}$  simultaneously at many points $x$ by applying {\em Fast Multipole Method (FMM)}
by Greengard and Rokhlin \cite{GR87,CGR88,DGR96}
(see more in Sec. \ref{snopth2}).
\end{remark}

\subsection{Background: root-lifting}\label{sdnd}

Our e/i tests of Part II  involve $h$-lifting of roots:
  
  \begin{equation}\label{eqdndgnr}
 p_0(x):=\frac{p(x)}{p_d},~p_h(x):=\prod_{g=0}^{h-1}  p_0(\zeta_h^gx)=(-1)^d\prod_{j=1}^{d}(x-z_j^h),~ \zeta_h:=\exp\Big(\frac{2\pi\sqrt {-1}}{h}\Big),
\end{equation} 
 for $h=1,2,\dots$
 and $t_{2^k}(x)=p_k(x)$ for all $k$.  $h$-lifting maps roots to their $h$th powers; this  keeps the unit disc $D(0,1)$ and its boundary  circle $C(0,1)$ intact but 
 moves the external roots  away from the circle, thus {\em strengthening its isolation} from those roots.  
 
\begin{remark}\label{rednd}
For $h$ being a power of 2, the $h$-lifting generalizes the
 {\em DLG classical root-squaring iterations} of Dandelin 1826, Lobachevsky 1834, and Gr{\"a}ffe 1837 
(cf.
\cite{H59}):      
\begin{equation}\label{eqdnd} 
 p_0(x):=\frac{p(x)}{p_d},~p_{2^{i}}(x^2):=(-1)^ dp_{2^{i-1}}(x)
p_{2^{i-1}}(-x)=\prod_{j=1}^d(x-z_j^{2^i}),~i=1,2,\dots.  
\end{equation}
 Polynomial root-finding in the 19-th
 and well into the 20-th century relied on  iterations (\ref{eqdnd}) combined with Vieta's formulas, which link
 the polynomial coefficients $p_0,p_1,\dots,p_d$ to the elementary symmetric polynomials of the roots.  The combination enabled efficient approximation of the minimal and maximal distance of the roots 
 from the origin and,
 due to the map (\ref{eqshft}), from any complex point (see \cite{MZ01,P22b,PGLZ23}
 for details). The approach became obsolete because of severe problems of numerical stability of the DLG iterations, but
 they disappeared in the case of root-finders that work for black box polynomials. Extensive tests
  with standard test polynomials  have consistently  demonstrated efficiency 
 of the particular 
implementation  of this 
approach in \cite{PGLZ23}, but even the straightforward implementation of 
iterations (\ref{eqdnd})
promises to be highly efficient as well. The paper \cite{GPS23} explores direct root-squaring for NIR of a black box polynomial:
$${\rm NIR}_{p_{h+1}}(x) =  \frac{1}{2\sqrt x}\left({\rm NIR}_{p_h}(\sqrt{x}) {\rm NIR}_{p_{h}}(-\sqrt{x})\right),~x\neq 0,~~h=0,1,\dots.$$
\end{remark}  
  
  \subsection{Soft e/i test with root-lifting: generic algorithm}\label{seiIII}
 In  Sec. \ref{sei} we  elaborate upon the following generic soft e/i 
test applied to the unit disc $D(0,1)$.
  
\begin{algorithm}\label{algexclIIr} {\em A generic soft e/i 
test}.    
     
INPUT:   $\sigma>1$, an integer $d>0$, and  a $d$-th degree black box polynomial 
$p(x)$  given by an oracle for its evaluation. 

INITIALIZATION: Fix two   positive integers $h$ and $q$ and  $q$ points $x_1,\dots,x_q$  on the circle $C(0,1)$.  

  COMPUTATIONS:  By extending (\ref{eqNRNIR}) to $p_h(x)$ of  (\ref{eqdndgnr}) (see (\ref{eqNRNIRh}))
 approximate the values
 %  NIR$_{t_h}(x_1)$:
\begin{equation}\label{eqvg'}  
  v_g:=|{\rm NIR}_{p_h}(x_g)|=\Big|\frac{p'_h(x_g)}{p_h(x_g)}\Big|,~g=1,\dots,q.
\end{equation}  
   If   $1+d/v_g\le \sigma^h$ for some $g\le q$, then certify that   \#$(D(0,\sigma))>0$ ($\sigma${\rm -soft inclusion}).
Otherwise, claim that \#$(D(0,1))=0$ ({\rm exclusion}).
%\end{description} 
\end{algorithm} 
 
\begin{remark}\label{reeil} 
(\ref{eqNRNIRh}) and (\ref{eqdndgnr}) together express $v_g$ for all $g$ through $2hq$ values of $p(x)$, by-palink to the values of $p_h(s)$ helps 
us o prove correctness  of the algorithm.
\end{remark}

 \subsection{Soft e/i test with root-lifting: correctness of inclusion claim}\label{seiIIIincl}

Eqn. (\ref{eqratio})
 implies two useful corollaries: 
\begin{corollary}\label{coshftrh}
Map
(\ref{eqshft})
 scales NIR by $
\rho$.
\end{corollary}
\begin{proof}
Eqn. (\ref{eqratio}) %immediately
 implies that
NIR$(x-c)=$NIR$(x)$ and that NIR$(x/a)=a$NIR$(x)$.
\end{proof}
\begin{corollary}\label{coincl}
It holds that
 $r_d(c)\le d~\Big|\frac{p(c)}{p'(c)}\Big|$, that is,
 $\#\Big(D\Big(c,d~\Big|\frac{p(c)}{p'(c)}\Big|\Big)\Big)>0$, for any complex $c$.
\end{corollary}
\begin{proof}
Let $x=c$ in Eqn. (\ref{eqratio}).
If $|c-z_j|< \rho$ for all $j$, then $$\Big|\frac{p'(c)}{p(c)}\Big|=\Big|\sum_{j=1}^d\frac{1}{c-z_j}\Big|\le \sum_{j=1}^d\frac{1}{|c-z_j|}<\frac{d}{\rho},~
{\rm and~so}~\Big|\frac{p(c)}{p'(c)}\Big|>\frac{\rho}{d}.$$
\end{proof}
We  use Cor. \ref{coshftrh} later but next apply Cor. \ref{coincl} to $p_h$ for a sufficiently large $h$
 of order $\log(d)$ to prove correctness of the inclusion claim of Alg. \ref{algexclIIr}.

 \begin{lemma}\label{leincl}
 Let $\rho:=d\Big|\frac{p_h(x)}{p'_h(x)}\Big|$ and $\sigma:=(|x|+\rho )^{1/h}$
 for  a positive integer $h$. Then \#$(D(0,\sigma))>0$.
  \end{lemma}
  \begin{proof}
 Apply Cor. \ref{coincl}  
to the polynomial $p_h(x)$ and obtain that
the equation $\rho=d\Big|\frac{p_h(x_)}{p'_h(x)}\Big|$
implies that
the disc $D(x,\rho)$
contains a zero of $p_h(x)$. Hence
 the disc $D(0,|x|+\rho)$ contains a zero of $p_h(x)$ 
by virtue of triangle inequality. Therefore,
the disc $D(0,\sigma)$
 contains 
a zero of $p(x)$ for
 $\sigma=(|x|+\rho)^{1/h}$.
  \end{proof}
  
 Apply this lemma for $x=x_i$,  $|x_i|=1$, $\rho=d/v_i$, and $v_i=|{\rm NIR}_{p_h}(x_i)|$
 (cf. Alg. \ref{algexclIIr}). Deduce that \#$(D(0,\sigma))>0$ for $\sigma^h=1+d/v_i$,  that is, Alg. \ref{algexclIIr} is correct when it claims $\sigma$-soft inclusion.
   
\begin{observation}\label{exsftinc} {\rm  
$\sigma$-soft inclusion test by means of Alg. \ref{algexclIIr} is correct where}
 $$d\ge 2,~x_1=1,~\frac{1}{|{\rm NIR}(1)|}=4\sqrt d-\frac{1}{d},~
h=40\log_2(d),~{\rm and}~
\sigma<1.1.$$
\end{observation} 
\begin{proof} 
Indeed, in this case
$$1+\rho=1+\frac{d}{|{\rm NIR}(1)|}=4d\sqrt d=2^{2+1.5\log_2(d)}.$$  
Apply Lemma \ref{leincl} and obtain that \#$(D(0,\sigma))>0$ for
$\sigma=(1+\rho)^{1/h}=2^{\nu}~{\rm and}~
\nu=\frac{2+1.5\log_2(d)}{40\log_2(d)}$. 

Deduce that $\nu \le \frac{7}{80}$
because $\frac{2}{40\log_2(d)}\le 0.05$ for $d\ge 2$ and 
because $\frac{2+1.5\log_2(d)}{40\log_2(d)}<\frac{3}{80}$.

Hence $\sigma\le 2^{\frac{7}{80}}=1.062527 \dots<1.1$.
\end{proof}

\subsection{Soft e/i test with root-lifting: further analysis (outline)}\label{seiIIIexcl}

 We  prove {\bf (i) correctness of the exclusion claim}  of
 Alg. \ref{algexclIIr}  separately for two specifications, both strengthening isolation of the unit circle $C(0,1)$ from the roots by means of root-lifting (\ref{eqdndgnr}).

(a) In Sec. \ref{smdfd} 
  we choose any integer $q$  exceeding  $d$
  and then
prove
%deterministic 
 correctness of   exclusion
claim,
 provided that
$|{{\rm NIR}_{p_h}(x_i)}                                                                                                                                                                                                                                                                                                                                                                                                                                                                                                                                                                                                                                                                                                                                                                                                                                                                                                                                                                                                                                                                                                                                                                                                                                                                                                                                                                                                                                                                                                                                                                                                                                                                                                                                                                                                                                                                                                                                                                                                                                                                                                                                                                                                                                                                                                                                                                                                                                                                                                                                                                                                                                                                                                                                                                                                                           |<1/(2\sqrt q)$ for all $x_i$ being $q$th roots of unity. This bound is compatible with the bound of Observation  \ref{exsftinc} because
$4\sqrt d-\frac{1}{d}>3\sqrt d$ for $d\ge 2$.
We also extend this %deterministic 
e/i test
by allowing any $q>m$
(rather than $q>d$)
provided that  the unit disc contains $m$ roots 
 and
$|{{\rm NIR}_{p_h}(x_i)}                                                                                                                                                                                                                                                                                                                                                                                                                                                                                                                                                                                                                                                                                                                                                                                                                                                                                                                                                                                                                                                                                                                                                                                                                                                                                                                                                                                                                                                                                                                                                                                                                                                                                                                                                                                                                                                                                                                                                                                                                                                                                                                                                                                                                                                                                                                                                                                                                                                                                                                                                                                                                                                                                                                                                                                                                           |<1/(3\sqrt q)$ for all $x_i$ being $q$th roots of unity.

(b) In Sec. \ref{seirndalg} 
 we sample $q=O(\log(d))$ iid random values of $x_i$ under the uniform probability distribution on the circle $C(0,1)$ and then prove that the exclusion claim is correct whp under the Random Root Model  if 
 $|{{\rm NIR}_{p_h}(x_i)}|\le 1/d^a$ 
 for a sufficiently large constant $a$.
  
 {\bf (ii) Counting query locations.}
Eqn. (\ref{eqdndgnr}) reduces approximation of  $p_h(x)$ at $x=c$
  to approximation of $p(x)$
  at all  $h$th roots of $c$. We can fix an integer $h>1$  and apply (\ref{eqdndgnr})
   or  fix $h>1$ being a power of two  and apply (\ref{eqdndgnr}). In both cases Alg. \ref{algexclIIr} evaluates polynomial $p(x)$ either at $2jh$ points for $j\le q$ when it claims inclusion  or at $2qh$ points when it claims exclusion. We always choose   $h=O(\log(d))$ and then ignore a factor of $h$ in our $\tilde O$ estimates. 
   
In our 
%deterministic 
soft e/i test based on  Alg. \ref{algexclIIr}  we evaluate polynomial $p(x)$ at $\tilde O(m)$
 points and  then
 combine these estimates with Cor. \ref{cosgmz}, and bound by
$\tilde O(bm^2)$ the                                              overall number of  evaluation queries in  the resulting root-finders. 

Under the {\em Random Root Model} of Sec. \ref{scmplests} the roots 
are reasonably well  isolated whp from the unit circle $C(0,1)$. We strengthen this isolation by applying  %alternatively  
$O(\log(d))$ root-lifting  steps of (\ref{eqdndgnr}) and 
 then whp  bound the overall number of  queries   by  $\tilde O(bm)$. 

{\bf (iii) Computational precision and bit-operation complexity.}    
  We deduce  the same bound $O(b\log(d))$ of \cite{LV16}
 on computational precision of our root-finders; our derivation 
   is much simpler
 because unlike  \cite{LV16} we only use
 the first order derivatives and
  divided differences.

 We combine this estimate and our count  of evaluation  queries in our root-finders with (i) Storjohann's algorithm for Las Vegas computation of the determinant \cite{S05}
to obtain record fast
solution of Problems 2 and $2^*$ of  matrix eigenvalue approximation (see alternative extension to eigenproblem in part 
(C) of Sec. \ref{sdrctacc}) and
(ii) the complexity estimates  of Guillaume Moroz  \cite{M21} for his fast  multipoint
 evaluation of a polynomial represented in monomial basis
to arrive at
 the near-optimal complexity bounds of 
our Main Theorems 1 -- 4 within a factor of $b$, to be removed in Part IV. 

%-----------------------------------------------------------------------------
 
\subsection{Cauchy-based soft e/i tests: outline}\label{scchei} 
 
%-----------------------------------------------------------------------------
 
Next we outline our alternative soft e/i test of Part III.  
  Let $\#(\mathcal D)$ denote the number 
of roots  lying in a complex domain $\mathcal D$ with  the boundary $\Gamma$ and
recall from \cite{A00} that  
   
\begin{equation}\label{eqchint}
 s_0=\#(\mathcal D)=\frac{1}{2\pi\sqrt {-1}}\int_{\Gamma}\frac{p'(x)}{p(x)}~dx,
  \end{equation} 
   that is, $\#(\mathcal D)$ is the average of NIR$(x)=\frac{p'(x)}{p(x)}$ on the boundary $\Gamma$.
   
 Having an approximation of integral  (\ref{eqchint}) within  less than 1/2, we
 can round it to a closest integer and obtain $s_0=\#(D(0,1))$. This root-counter defines  $\ell$-tests
 for all $\ell\le d$ and in particular an e/i test, for $\ell =1$.

%\subsubsection{Cauchy-based soft e/i tests: %outline}\label{seiII}

In Part III we approximate
the integral  (\ref{eqchint}) with finite sums  of the  values of $\frac{x}{q}$NIR$(x)$ at  the scaled $q$th roots of unity.   
Namely, let 
$\zeta:=\zeta_q$  denote a primitive $q$th root of unity
(cf. (\ref{eqdndgnr}))
%\begin{equation}\label{eqzt}                                                                                                                                     \zeta:=\zeta_q:=\exp\Big(\frac{2\pi{\bf i}}{q}\Big),~{\bf i}:=\sqrt{-1}, 
% \end{equation}
and devise randomized $\sigma$-soft e/i tests  as follows.
% (cf. \cite{S82} and the implementation of fast numerical approximation of Cauchy integrals with the Exponentially Convergent Adaptive Trapezoidal Quadrature in the BOOST C++ Libraries): 
%\bigskip

\begin{algorithm}\label{algeic} {\rm [Cauchy e/i test].}

(i) Fix a constant $\gamma>1$, say, $\gamma=2$, and choose a real $\rho$ at random in a fixed range in
 $(1,\sigma)$. 

(ii) Fix a sufficiently large  integer $q$ of order $m\log(d)$.

 (iii) Compute   approximation $s_{0,q}=\frac{\rho}{q}\sum_{g=0}^{q-1}\zeta^g\frac{p'(\rho\zeta^g)}{p(\rho\zeta^g)}$ to the integral (\ref{eqchint})
for the boundary $\Gamma=C(0,\rho)$.
 Do this by means of approximation of NIR$_{p(\rho x)}(x)=\frac{p'(\rho x)}{p(\rho x)}$ at the points $x=\zeta_q^g$ for $g=0,1,\dots,q-1$; by
applying (\ref{eqNRNIR}) reduce  that task to  the evaluation of $p(x)$ at $2q$ points $x$.
Compute an  integer $\bar  s_0$  closest to $s_{0,q}$. 

(iv) Unless  $\bar s_0=0$
certify inclusion.

Otherwise, repeat the test $v$ times for a fixed integer $v>0$ (at the cost of the 
evaluation of 
$p(x)$ at $O(mv\log(d))$ points). Unless inclusion is certified in these tests,  claim exclusion (correctly with a probability at least  $1-1/\gamma^v$).  
 \end{algorithm}
 
% in Part II we also devise a deterministic soft e/i test at the price of increasing the above bound on the number of  pointsto order of $m^2\log(d)$.

 \section{Faster subdivision with compression: outline}\label{saccIV} 
 
 In Part IV we  accelerate the algorithms of both Parts II and III by a factor of $b/\log(b)$  
 to ensure pairwise isolation of clusters of roots and eigenvalues and to prove Main Theorems 1 -- 4; we prove the first three of them in two ways -- by devising two distinct supporting root-finders based on Parts II and III, respectively.
  Next we outline that acceleration.

{\bf (i) Component tree and compact components.}
   For fixed integer $m$,  positive $\epsilon$, and $\sigma$,  $1<\sigma < \sqrt 2$, apply subdivision process
to Problem 1 and  at every subdivision step partition the union of  the suspect squares  into connected components.

\begin{figure}
\centering
\resizebox{!}{0.15\textheight}{
\begin{tikzpicture}
\draw (0, 0) circle (0.74);
\draw (0, 0) circle (1.42);
\draw (-2, 2) -- (2,2) -- (2, -2) -- (-2, -2) -- (-2, 2);
\draw (-0.5, 0.5) -- (0.5,0.5) -- (0.5, -0.5) -- (-0.5, -0.5) -- (-0.5, 0.5);
\draw (-1, 2) -- (-1, -2);
\draw (1, 2) -- (1, -2);
\draw (2, -1) -- (-2, -1);
\draw (2, 1) -- (-2, 1);
\draw (0, 2) -- (0, -2);
\draw (-2, 0) -- (2, 0);
\draw[red] (-0.1, 0.15) node [left]{$*$}; 
\draw[blue] (-0.1, -0.15) node [left]{$*$};
\draw[red] (-0.1, 0.15) node [right]{$*$}; 
\draw[blue] (-0.1, -0.15) node [right]{$*$}; 
\end{tikzpicture}}
\caption{Roots  (asterisks)  define compact components. A subdivision step halves their diameters  and  accordingly strengthens isolation of their minimal covering squares and discs.}\label{fig3}
\end{figure} 

%--------------------------------------------------------------------------------. 

Represent all  partitions 
 of components with a {\em component tree} whose
 at most $m$ leaves 
 represent components covered by isolated $\epsilon$-discs.  
The other (at most $m-1$) vertices of the tree  represent 
components-parents, each broken by a subdivision step into at least two components-children. The initial suspect square is the  root of the tree. The tree has at most $2m-2$ 
edges,
 connecting parents with their children and representing
 partition of the components but not
 showing modification of components between  partitions.

Every component contains $O(m)$
suspect squares whose  diameter is halved at
 every subdivision step. Hence in $O(b)$  subdivision iterations, for $b:=\log_2(2\Delta'/\epsilon)$ and  the diameter $\Delta'$ of the initial suspect square,  every component has diameter     below $\epsilon/2$ and hence lies in an $\epsilon$-disc. 

{\bf (ii) Faster subdivision  with  compression.} 
Call a component  $\mathcal C$ {\em compact} if all   
 (at most four) its squares share  a vertex; subdivision steps fast decrease  the diameter of  $\mathcal C$ but never decrease its distance from external roots
(see Fig. \ref{fig3}). 

Subdivision  becomes  inefficient  where a  component stays unbroken and  compact in many  subdivision steps (see Fig. \ref{fig4}), and we  replace them with  its {\em compression} with no loss of roots, of which the component very soon  becomes strongly isolated. 

The compression outputs either an  $\epsilon$-disc,  and then we stop, or a rigid disc $D$, that   is, minimal up to a  fixed constant factor, and then we  subdivide a minimal superscribing square $S$ of  $D$. Since  the disc $D$ is rigid,  the iterations   very soon output  more than one component
or a sub-domain of  an $\epsilon$-disc.

 \begin{figure}  
\centering
%INSERT FIGURE_Newton_converge
%\includegraphics[width = 0.75\textwidth]{7-10.png}
%QL code for figure 7-10
\begin{tikzpicture}
         \draw (0, 0) circle (1.5 cm) ;
         \draw (-2, 2) -- (2,2) -- (2, -2) -- (-2, -2) -- (-2, 2);
        
         \draw (-2, 0) -- (2, 0);
         \draw (0, 2) -- (0, -2);
        
         \draw (-2, -1) -- (0, -1);
         \draw (-1, 0) -- (-1, -2);
        
         \draw (-2, -1.5 ) -- (-1, -1.5);
         \draw (-1.5, -1) -- (-1.5, -2 );
        
         \draw (-1.25, -1.5) -- (-1.25, -2);
         \draw (-1.5, -1.75) -- (-1, -1.75);
        
         \draw (-1.375, -1.75) -- (-1.375, -2);
         \draw (-1.5, -1.875) -- (-1.25, -1.875);
        
         \draw[red] (-1.75, 2.90) node [left]{$*$};
         %\draw[red] (-1.3, 0.7) node [left]{$*$};
         %\draw[red] (-1.0, -0.4) node [left]{$*$};
         \draw[red] (-1.15, -1.65) node [left]{$*$};
         \draw[green] (-1.23, -1.80) node [left]{$*$};      
         
         \draw[blue] (2.225, 2) node [left]{$*$};
         \draw[blue] (0.225, 0) node [left]{$*$};
         \draw[blue] (-0.775, -1) node [left]{$*$};
         \draw[blue] (-1.275, -1.5) node [left]{$*$};
         \draw[blue] (-1.0, -1.750) node [left]{$*$};
\end{tikzpicture}
\caption{Five blue marked centers of suspect squares converge to a green-marked root cluster with linear rate; two red marked  Schr{\"o}der's iterates
of (\ref{eqSCHR}) converge with quadratic rate.}\label{fig4}
\end{figure}

 This way    we  solve  Problem 1 by applying at most
 $2m-2$  compression steps and in addition
$O(m)$ soft e/i tests,
as we deduced  in  Sec. \ref{scntei}.
 
{\bf (iii) Compression of components.} 
 We compress  a compact component $\mathcal C$ into a disc  $D=D(c,\rho)$ by computing: 
 \\
   (a) the number of roots in a component,\\
   (b) a point $c$
in or near  the minimal disc covering the root set of $\mathcal C$ (hereafter the {\em MCD} of $\mathcal C$) and  \\
 (c)   a reasonably  close upper bound $\rho$ on the 
  maximal distance    from $c$ to the roots  in  $\mathcal C$.
  
  In Secs. \ref{scmpralgd} and \ref{secmprstris} we readily perform steps (a)
  and (b) at a low computational cost because subdivision iterations fast strengthen the isolation of the component from external roots.   

We begin step (c)  with a range $[\rho_-,\rho_+]$ for
 the bound $\rho$,  $0<1/2^b=\rho_-<\rho_+$.
 We can 
 approximate $\rho$ by applying  bisection  to this range but 
 proceed faster by  applying bisection  of the logarithms, said to be {\em BoL},  to the range  $[\log_2(\rho_-),\log_2(\rho_+)]=[b,\log_2(\rho_+)]$, for $\log_2(\rho_+)\le 0$.
  
We apply  fast soft $m$-tests
 (see part (A) of Sec. \ref{ssbdrdei}) to
 define a pivot step of our accelerated bisection, with which we ultimately reduce compression to $O(m\log(b))$
 e/i tests.
Their computational cost dominates 
  the overall 
 cost of the solution of
Problems 0,0$^*$, 1, and 1$^*$, and  this enables us to complete  the proof of  our Main Theorems.
  
%-----------------------------------------------------------------------------

\begin{remark}\label{renndj}
Many ``natural" directions to proving our Main Theorems have led us into dead end. E.g., we tried to use 
a tree whose vertices represented components made up of {\em non-adjacent} suspect squares; then we readily bounded by $2m-1$ their overall number at all partition stages but could not  count them so readily
between these stages
 until we decided to count all suspect squares. See more examples of this kind in the next remark and Remark \ref{recchlft}.
\end{remark}

%-----------------------------------------------------------------------------

\begin{remark}\label{rebtcmpl}
By using some of the advanced techniques of Louis and Vempala in \cite{LV16}, we  readily bound the precision of computing in our root-finders. This enables us to deduce {\em a priori} estimates for the bit operation cost of our subdivision root-finders versus  inferior known a posteriori
estimates (see, e.g., \cite{PT13/16,BSSY18}). In particular, the bit operation cost estimates of the Main  Theorem of  \cite{BSSY18} contain
the term $$\log(1/{\rm Discr}(p(x))),$$ with ``Discr" standing for ``Discriminant", such 
that $$\log(1/{\rm Discr}(p(x/2^b)))=(d+1)db\log(1/{\rm Discr}(p(x))).$$
Subdivision iterations
 of the known root-finders, in particular,
 of those of  \cite{BSSY18}, 
scale the variable $x$ 
order of $m$ times  by factors of  
$2^b$. Hence  the bit-complexity specified in the  Main Theorem of \cite{BSSY18} has order of $mbd^2$, which {\bf exceeds} 
our bound and that of \cite{P95,P02} {\bf by a factor of} $d$ (see more comments in Sec. \ref{sblnscl}). 

\end{remark}

%-----------------------------------------------------------------------------
%-----------------------------------------------------------------------------

\section{Our novelties,  related works, implementation, and further progress}

%-----------------------------------------------------------------------------

\subsection{Our novelties (a short list)}
 Root-finding for polynomial given in the monomial basis, with their coefficients, has been studied
for four millennia and
very intensively in the last decades, but we had to introduce a number of nontrivial novel techniques (i) to handle efficiently black box polynomials and  (ii) to
accelerate the known subdivision algorithms to a near-optimal level.

Some of our techniques such as  randomization, numerical stabilization of root-squaring,   compression of a disc  without losing roots,  bisection of logarithms, approximation of root  radii and near-real roots, and even
 our simple error detection recipe can be  of independent interest.

%-----------------------------------------------------------------------------

\subsection{Related works}

% Unlike deterministic solution in\cite{P95,P02}we use Las Vegas  randomization but  propose algorithms that work for a black box  polynomial, are simpler to implement, and in the case of a polynomial $p$ given with its coefficients support the same near-optimal  complexity bound of \cite{P95,P02}.

From the pioneering paper \cite{LV16}   we adopt the model of computation,  extension of a black box polynomial root-finder to approximation of  matrix eigenvalues, and 
  the approach to estimation of the computational precision of root-finders, but otherwise we use distinct approach and techniques.                        
  
  Estimating the bit operation  complexity of our root-finders we use  \cite[Eqn. (9.4)]{S82} and 
\cite[Thm. 2]{M21}   
for multipoint polynomial evaluation.\footnote{The ingenious algorithm of 
  Moroz  supporting his \cite[Thm. 2]{M21} departs from the simple observation that  
 a polynomial $p(x-c)\mod(x-c)^{\ell-1}$ 
  approximates $p(x)$  
in the disc $D(c,\rho)$
within $O(\rho^{\ell+1})$
 for small $\rho$. To approximate $p(c)$ for any $c\in D(0,1)$  
  Moroz used $\tilde O((\ell+\tau)d)$  bit operations to compute
 a set of complex $c_i$
 and positive $\rho_i$
 for $i=1,\dots,K$  and $K=O(d/\ell)$ such that 
$D(0,1)\subseteq \cup_{i=1}^KD(c_i,\rho_i)$; then he ensured   approximation to $p(c)$  for every point $c\in D(0,1)$ within
$\epsilon:=3\sum_{_i=0}^d|p_i|/2^{\ell}$ by means of approximation  within $\epsilon/2$
of the lower degree polynomial
$p(x-c_i)\mod(x-c)^{\ell+1}$ for an appropriate $i$. At this point, application of  \cite[Alg. 5.3 and Thm. 3.9]{K98} 
yields new record   fast algorithm for multipoint  approximation of a polynomial given with its coefficients. Ignoring Kirinnis's paper \cite{K98}, Moroz
cited a  rediscovery of his result by Kobel and Sagraloff, made two decades later.}

Instead of applying high order Newton's iterations of \cite{LV16}, we accelerate classical subdivision root-finders.
The closest earlier variations
in \cite{R87,P00,BSSXY16,BSSY18}  use mach slower e/i tests, involving coefficients of $p$.
Two our fast novel 
e/i tests -- based on
 root-lifting and  approximation of Cauchy integrals, respectively,  work for a black box polynomial. 

 The algorithms of \cite{R87,P00}
dramatically   accelerate
 convergence of subdivision process by means of applying Newton's or Schr{\"o}der's iterations  (see Fig. \ref{fig4}). They converge with superlinear rate  to the convex  hull  of all roots lying in $C$.
The diameter of the  minimal covering disc, however, can be too large, and then
the papers \cite{R87,P00}
apply subdivision iterations again. 
The resulting root-finders 
recursively   intertwine subdivision iterations with  Newton's or 
   Schr{\"o}der's, and this 
    complicates the complexity estimates for a root-finder.
    
 \cite{R87,P00} have only estimated the number of ops but not of the bit operations involved.
\cite{BSSXY16,BSSY18}  estimated the bit operation complexity 
in terms of  the unknown   minimal distance between the unknown roots. Specified in terms of $b$ and $d$, the estimates  turned out to be inferior to ours
by a                                                                                                                                                                                                                                                                                                                                                                                                         factor of $d$ (see Remark \ref{rebtcmpl} and Sec. \ref{sblnscl}).

Instead of Newton's or 
   Schr{\"o}der's iterations, one can apply the algorithm of \cite{S82}. It first splits $p(x)$ into the product of two polynomials $f(x)$ and $g(x)$
whose zero sets lie in and outside the minimal disc covering the  component  $ C$, respectively. Then it recursively splits the polynomial $p(x)$ into factors  $f=f(x)$ and $g=g(x)$. The resulting  algorithm of  \cite{S82} is slower than ours 
by a                                                                                                                                                                                                                                                                                                                                                                                                         factor of $d$.
It has been accelerated to a near-optimal level in \cite{P95,P02}, although that root-finder is much involved and extensively uses the coefficients of $p$,
  like the root-finders of \cite{S82,R87,P00,BSSXY16,BSSY18}.

 \cite{M21,IM23} 
 %use $\tilde O(bd)$ bit operations
 presented efficient
 root-finders
 for  polynomials  having only well-conditioned zeros, but not for worst case polynomials $p(x)$. Furthermore, the algorithms
 of \cite{M21,IM23} extensively use the coefficients of $p(x)$.
%In spite of the appeal to the  concept of %condition number, used for the design and analysis of polynomial root-finding in \cite{BCSS98,CS99,BR14}, the upper bound on the condition numbers of all zeros of $p(x)$ in \cite{M21,IM23} is a little {\em stronger restriction} than the bound on the cluster sizes  of Def. \ref{defclstr}, under which \cite{P95,P02}
% proved  the same complexity bound.

% Without using the algorithm of \cite[Thm. 2]{M21}, our current algorithms still support such a near-optimal bit operation cost bound  for Problem 1 with no  restriction  on an input polynomial $p$  in the case where  $m^2=\tilde O(d)$, $1\le m\le d$ (see \cite[Remark 20.1]{P22}) as well as under the Random Root Model of Sec. \ref{scmplests}. 
  
%------------------------------------------------------------------------------- 
     
\subsection{Implementation: State of the Art (briefly)}\label{sprpr}
  
 The near-optimal but  quite involved algorithm of \cite{P95,P02} has never been implemented, while  the slower algorithm of \cite{S82} has been implemented by Xavier Gourdon in \cite{G96} for the Magma and PARI/GP computer algebra systems.
By now it is superseded by implementations of various
 non-optimal complex  polynomial root-finders  such as MATLAB ``roots'', Maple ``solve'',  CCluster, and MPSolve; the latter one relies on Ehrlich's iterations and is user's current choice for approximation of all $d$ complex roots
 (see some competitive implementations of Newton's iterations in \cite{SS17,RSS24,MV24}).
%  In particular, the paper \cite{IPY18} covers efficient implementation of the root-finder  of \cite{BSSXY16}.

   Current user's choice for real root-finding is the algorithm of \cite{KRS16}, although it has already been  superseded in \cite{IP21}
   according to extensive tests for standard test polynomials.  
  
We present  algorithms that  promise to compete for user's choice.
   Already  incorporation in  2020 of some of our techniques into the previous best implementation of subdivision
root-finder   \cite{IPY18} has enabled its noticeable  acceleration (see \cite{IP20}).  
The  more advanced publicly available implementation in \cite{IP22} ``relies on novel ideas and techniques" from \cite{P22} (see \cite[Sec. 1.2]{IP22}). According to the  extensive tests with standard test
polynomials, the algorithm has performed {\em dramatically faster}
than \cite{IPY18},
  has  at least competed  with  user's choice package MPSolve even for
approximation of all $d$ zeros of a polynomial  $p$ given with its 
coefficients,\footnote{The computational cost of subdivision root-finders
in a domain   $\mathbb D$  as well as the root-finder of 
\cite{P95,P02}
decreases at least proportionally  to the number of roots in $\mathbb D$,  while  Ehrlich's, Weierstrass's,  and a number of other functional iterations approximate the zeros of $p$ in  a domain $\mathbb D$ almost as slow as all its $d$ complex roots. Thus  already implementation in  \cite{IPY18} of subdivision iterations, slightly outperformed  MPSolve for root-finding in a domain $\mathbb D$ containing a small number
  of roots although  is  by far inferior for the approximation of all $d$  roots.}
 and  superseded MPSolve more and more significantly as the degree $d$ of $p$ grew large. 
 
 There is still a large room for further improvement because the root-finder of \cite{IP22}  inherits some earlier sub-algorithms from \cite{BSSXY16,IPY18},
 which are slower by order of magnitude  than our current ones (see the next subsection).
 
Application of  our near-optimal subdivision root-finder in Remark \ref{rerlrtf} to approximation of {\em real and near-real roots}
promises to  be at least competitive with user's current choice 
\cite{KRS16} and its  improvement in \cite{IP21}. It can be applied to a black  box polynomial, 
 counters the impact of input and rounding errors, and can be extended to approximation of real and near-real matrix eigenvalues.
                                                                                                                                  
\subsection{Further progress and promises}\label{sdrctacc}
%-----------------------------------------------------------------------------

% and we can expect further advance from their combination  with the root-finder of \cite{IM23}.

  {\bf (A) Acceleration of e/i tests.} E/i tests are basic ingredient of  subdivision root-finders.
  \cite{IP20,IP22} used 
  the Cauchy-based e/i tests of our Part III and their heuristic acceleration.  We can devise and  test similar  heuristic acceleration of
the 
formally supported e/i tests of our Part II, based on root-lifting.
Namely, we can merely apply Alg.  \ref{algexclIIm} with an integer $q$ decreased dynamically from $m+1$, as long as the exclusion  claim remains empirically
correct.

\noindent      
{\bf (B) Acceleration of the  compression of a disc.} 
Assuming that
the unit disc $D(0,1)$
contains $m$ roots and 
is isolated from the $d-m$  external roots,
 we compress
that disc towards an $\epsilon$-disc or a rigid disc, in both cases losing no roots. Algorithmic extras 1 (for compression of a disc with a small number of roots) and 2 of Sec. \ref{sextr} can  help strengthen this approach.
 
%Like us in Sec.\ref{saccIV}, 
Like us, \cite{IP22} reduces compression 
 to (a) non-costly root-counting in an isolated disc, (b) computing a point  $c$ lying  in or near the {\em MCD}, that is,
the minimal covering disc  \footnote{One may use the convex hull instead of the minimal covering disc.} of the root set in  $D(0,1)$, and (c)   approximation of the $m$th smallest root radius from $c$.
 
At stage (b)  \cite{IP22} computes a close approximation of the sum $s_1$ of the $m$ roots lying 
in the disc $D(0,1)$
and then outputs $c=\frac{s_1}{m}$. In Sec. \ref{scmpr}
we greatly accelerate 
this algorithms in the case where the radius of the output disc is small.

At stage (c)
\cite{IP22} applies 
the algorithm of our Sec. \ref{smvia1}, which  performs a constant  but fairly large number of e/i tests in a disc. 
  One may test this approach  versus our alternative of Sec. \ref{seiann}
 using a single e/i test for an annulus and versus the combination of
  the Las Vegas randomized algorithm of
Sec. \ref{mdls} with BoL of Sec.  \ref{sextrrk}.  
 
Finally, implementation of compression in \cite{IP20,IP22} is slowed down at the stage of verification based on Pellet's theorem, but we can  drop this verification because we readily detect highly unlikely output errors of our  root-finders at a dominated cost (see Sec. \ref{smrerrdtc}). 

%heuristic root radius algorithms  based on the DLG iterations can be  competitive (see Remark \ref{rednd}). 

\noindent {\bf (C) Fast approximation of the eigenvalues of a matrix.} 
 Our root-finders are essentially reduced to the evaluation of 
 NIR$(x)=\frac{p'(x)}{p(x)}$ at sufficiently many  points $x$, and
we can extend them to approximation of the  eigenvalues of a matrix or a matrix polynomial  by following
\cite{LV16} or based
 on the following alternative expressions:
 
$$\frac{p(x)}{p'(x)}=\frac{1}{{\rm trace}((xI-M)^{-1})}$$  
where $M$ is a matrix, while $p(x)=\det(xI-M)$ (cf. (\ref{eqratio}))
and  
$$\frac{p(x)}{p'(x)}=\frac{1}{{\rm  trace}(M^{-1}(x)M'(x))}$$  
where $M(x)$ is a matrix polynomial and $p(x)=\det(M(x))$ \cite[Eqn. (5)]{BN13}.\footnote{These expressions have been  obtained based on
 Jacobi’s formula
${\rm d}(\ln(\det(A)))={\rm trace}(A^{-1}{\rm d}A),$
 which links  differentials  of  a nonsingular  matrix $A$ and 
  of its determinant.}

These expressions  reduce 
approximation of the eigenvalues of a matrix or a matrix polynomial to its inversion. One can use the matrix inversion  algorithm and its  bit operation cost estimates of \cite{S15} towards alternative derivation of our Main Theorem 2 for any input matrix or matrix polynomial, but the  inversion can be greatly  accelerated in the highly important case where an input  matrix or matrix polynomial can be inverted fast, e.g., is data sparse.  
 
%------------------------------------------------------------------------------

Bini and Noferini in \cite{BN13} proposed and explored such a reduction of approximation of all $d$ eigenvalues to 
root-finding by means of Ehrlich's iterations
and showed various benefits of this approach, but we can readily combine this reduction with  subdivision iterations 
instead of Ehrlich's and then obtain additional benefits:  our algorithms  (a)
are {\em  much faster} for approximation of a small number of eigenvalues that lie in {\em any fixed domain} and (b) {\em isolate the eigenvalues} instead of just approximating them.

\noindent {\bf (D) Acceleration of   real and near-real root-finding.} In Remark \ref{rerlrtf}
we adjust our near-optimal subdivision iterations to real and near-real root-finding,
which  counters the impact of input and rounding errors.

\noindent  {\bf (E) Acceleration by using fast multipoint polynomial evaluation.}
Mere incorporation of
fast algorithms of \cite{M21,IM23} should
greatly accelerate the performance  of \cite{IP20,IP22}
in the case of root-finding for  polynomials given with their
coefficients.

%{\bf (E) User's choice.}   It is not clear, of course,  which of all presently implemented and  new  polynomial root-finders
% or their combinations will be user's choice;    the winner can very well be a slower root-finder implemented more efficiently.
 
\subsection{Organization of the rest of the paper}\label{sorg}
%\medskip  
%\medskip

In Parts II and III we devise and analyze our two e/i tests.
 We use Las Vegas randomization in both parts to bound computational precision and bit-operation cost.
In Part II the same randomization helps us devise fast e/i tests. In Part III we support that step with additional randomization. 

In  Part II we  further accelerate by a factor of $d$  e/i tests and overall root-finding  under the Random Root Model.

We make our presentation    in Part III largely independent of Part II by reusing  some definitions, results,  and arguments.

In Part IV we devise 
 and analyze our compression algorithm and incorporate it into the algorithms of Parts II and III --
 to  accelerate them by a factor of $b/\log(b)$ and to ensure pairwise isolation of the output roots and their clusters. 

In  Part V we cover 
 (i) low bounds on the bit-operation complexity  of polynomial   factorization, root-finding, and root isolation, (ii) acceleration with FMM 
(cf. Remark \ref{refmm}) and   initialization of root-finding by means of functional iterations,
 and (iii) some additional algorithms for root radii approximation.

%, made up of Secs. \ref{scprtf} and \ref{4.15},   we  outline the fundamental results of Sch{\"o}nhage
%from \cite{S82} on polynomial factorization, its links to root-finding and root-isolation, and some relevant bit-complexity estimates, including lower bounds.
%We make up Appendix from Parts IV and V because  Parts I-III already  support our Main Theorems. 

\medskip
\medskip

{\Large \bf PART II: Classical subdivision root-finder with novel e/i tests  based on root-lifting} 
\medskip
\medskip

{\bf Organization of Part II:}  We devote the next section to background. In Sec. \ref{sei}   we cover our e/i tests. In Secs. \ref{sprcs11} and \ref{sblnrtfdg} we estimate computational precision of our root-finders and their bit operation cost, respectively.
In Sec. \ref{smxeig11} we extend 
our study of root-finding to approximation of matrix eigenvalues.

%In Sec. \ref{sdscfrall}we compute a disc that covers all $d$ zeros of $p$.

 \section{Background}\label{sbcgr}

\subsection{Basic definitions}\label{sdef}
 
 We extend the list of definitions used in the Introduction.
 
\begin{itemize}
  \item%7
 ${\rm NR}_{t}(x):=\frac{t(x)}{t'(x)}$ and  ${\rm NIR}_t(x):=\frac{t'(x)}{t(x)}$ are  {\em Newton's  ratio} and 
    {\em Newton's inverse ratio}, respectively, for a polynomial $t(x)$.
   We write NR$(x):={\rm NR}_{p}(x)$
and NIR$(x):={\rm NIR}_{p}(x)$ (cf. (\ref{eqratio})).
  
 \item%1 
% $S(c,\rho)$, $D(c,\rho)$, 
%$C(c,\rho)$, and
%$A(c,\rho,\rho')$ 
%denot
Define squares, discs, circles (circumferences),  and annuli on the complex plane: 
%respectively:
%\begin{equation}\label{eqdsc} 
$S(c,\rho):=\{x:~|\Re(c-x)|\le 
 \rho$, $|\Im(c-x)|\le \rho\}$,~ 
$D(c,\rho):=\{x:~|x-c|\le \rho\},$
%\end{equation} 
%\begin{equation}\label{eqcrcl}
$C(c,\rho):=\{x:~|x-c|= \rho\}$,
~$A(c,\rho,\rho'):=\{x:~\rho\le |x-c|\le \rho'\}.$
 
 \item%4 
   $\Delta(\mathbb S)$, 
    $MCD(\mathbb S)$, $X(\mathbb S)$, and $\#(\mathbb S)$ denote the diameter,   
    root set, minimal covering disc, and index (root set's cardinality) of a set $\mathbb S$ on the complex plane, respectively.

\item%5 
A disc $D(c, \rho)$, 
a circle $C(c,\rho)$, or a square $S(c, \rho)$ is $\theta$-{\em isolated} for $\theta\ge 1$  if $X(D(c, \rho))=X(D(c, \theta\rho))$, $X(C(c,  \rho))=X(A(c,\rho/\theta,\rho\theta)$ or~$X(S(c,\rho))=X(S(c, \theta\rho)),$ respectively.  \\
  The {\em  isolation} of 
a disc $D(c, \rho)$, a circle $C(c, \rho)$, or a
 square $S(c, \rho)$ is the largest  upper bound
 on such a value $\theta$ 
  (see Fig. \ref{fig6}) and is  denoted
 $i(D(c,\rho))$, $i(C(c,\rho))$, or
 $i(S(c,\rho))$, respectively. \\
 We say ``{\em  isolated}" for ``$\theta$-isolated" if $\theta-1$
exceeds a positive constant.

%------------------------------------------------------------------------------
%------------------------------------------------------------------------------

\begin{figure}
\centering
%\begin{subfigure}[b]{0.4\textwidth}
%\centering
\resizebox{!}{0.2\textheight}{
\begin{tikzpicture}
\draw (0, 0) circle (5cm);
\draw[red] (0, 0) circle (4cm);
\draw (0, 0) circle (3cm);

\draw [->] (0, 0) edge (-4, 3);
\draw [->] (0, 0) edge (0, 4);
\draw [->] (0, 0) edge (2.4, 1.8);

\draw node at (2, 1) {$\rho$};
\draw node at (0.5, 3.5) {$\rho\theta$};
\draw node at (-3.5, 3) {$\rho\theta^2$};

\draw node at (0, -0.2) {c};
\draw node at (1, -0.8) {*};
\draw node at (2.94, 0.585) {*};
\draw node at (-0.6, 0.15) {*};
\draw node at (-2.94, -0.585) {*}; 
\draw node at (3.89, 3.95) {*};
\draw node at (4.7, -2.75) {*};
\draw node at (-4.69, 1.8) {*};
\draw node at (-4.3, 3.43) {*};
\end{tikzpicture}}
\caption{The roots are marked by asterisks. The red circle $C(c,\rho\theta)$ and the   disc $D(c,\rho\theta)$ have isolation $\theta$.  The disc $D(c,\rho)$ has isolation
$\theta^2$.}
%\end{subfigure}
\label{fig6}
%\caption{}
\end{figure}

%-------------------------------------------------------------------------------
%------------------------------------------------------------------------------
 
\item%5
A disc $D$
%set $\mathbb S$ 
has 
{\em rigidity} $\eta(\mathbb S)=\Delta(X(D))/\Delta(D)\le 1$. A disc is $\eta$-{\em rigid}
   for  any 
 $\eta\le \eta(D)$. \\
We say ``{\em  rigid}" for ``$\eta$-rigid"  
 if $\eta$ exceeds a positive constant.

%------------------------------------------------------------------------------
 
 \item%7   
$|u|:=||{\bf u}||_1:=\sum_{i=0}^{q-1}|u_i|$,  
$||{\bf u}||:=||{\bf u}||_2:=(\sum_{i=0}^{q-1}|u_i|^2)^{\frac{1}{2}}$,
 and $|u|_{\rm max}:=||{\bf u}||_{\infty}:=\max_{i=0}^{q-1}|u_i|$
denote the 1-, 2-, and $\infty$-norm, respectively,
of  a polynomial $u(x)=\sum_{i=0}^{q-1}u_i x^i$ and its coefficient vector ${\bf u}=(u_i)_{i=0}^{d-1}$, and then
(see \cite[Eqs. 
(2.2.5)--(2.2.7)]{GL13})  
\begin{equation}\label{eqnrms12}
||{\bf u}||_{\infty}\leq ||{\bf u}||_{2}\leq ||{\bf u}||_{1}\leq {\sqrt {q}}~||{\bf u}||_{2}\leq q||{\bf u}||_{\infty}.
\end{equation}
%------------------------------------------------------------------------------
%------------------------------------------------------------------------------

\item%5
   $r_1(c,t)=|y_1-c|,\dots,r_d(c,t)=|y_d-c|$ in 
   non-increasing order are the $d$ {\em root radii}, that is, the distances from a  complex point $c$ to the roots $y_1,\dots,y_d$ of a $d$th degree polynomial $t(x)$.
    Write $r^h_j(c,t):=(r _j(c,t))^h$, $r_j(c):=r_j(c,p)$.
   % for $j=1,\dots,d$.
%-----------------------------------------------------------------------------
\end{itemize}     

\begin{observation}\label{obscchy11}
 {\em Taylor's shift}, or translation, of the variable
$x\mapsto y:=x-c$ and
its {\em scaling} 
 combined (see (\ref{eqshft})) map the zeros $z_j$ of $p(x)$
into the zeros $y_j=\frac{z_j-c}{\rho}$
of $t(y)$, for $j=1,\dots,d$, and preserve the index 
and the isolation of  the disc $D(c,\rho)$.
\end{observation}

\subsection{Reverse polynomial}\label{srvrs}

Immediately observe the following properties
of the reverse polynomial  (\ref{eqpolyrev}).

\begin{proposition}\label{thratio}

{\rm (i)}  The roots of $p_{\rm rev}(x)$ are the reciprocals
of the roots of $p(x)$, and hence
\begin{equation}\label{eqrrpolyrev0}
r_j(0,p)r_{d+1-j}(0,p_{\rm rev})=1~{\rm for}~j=1,\dots, d.
\end{equation}

{\rm (ii)} The unit circle $C(0,1)$ has the same isolation for $p(x)$ and  $p_{\rm rev}(x).~~~~~~~~~~~~$

$${\rm (iii)}~p'_{\rm rev}(x)=dx^{d-1}p\Big(\frac{1}{x}\Big)-x^{d-2}p'\Big(\frac{1}{x}\Big)~{\rm for}~x\neq 0,~ p_d= p_{\rm rev}(0),~ p_{d-1}=p'_{\rm rev}(0).$$

${\rm (iv)}~~~~\frac{p_{\rm rev}(x)}{p'_{\rm rev}(x)}=\frac{1}{x}-\frac{{\rm NIR}(x)}{x^2},~x\neq 0,~{\rm NIR}(0)=\frac{p_{d-1}}{p_d}=-\sum_{j=1}^dz_j.~~~~~~~~~~~~~~~~~~~~$
\end{proposition}

Together with the first equation of (\ref{eqpolyrev}) this defines expressions for
 the reverse  polynomial $p_{\rm rev}(x)$, its derivative, and  NIR$p_{\rm rev}(x)=\frac{p_{\rm rev}(x)}{p'_{\rm rev}(x)}$ through $p(\frac{1}{x})$, $p'(\frac{1}{x})$,
    and NIR$(\frac{1}{x})$.  

%------------------------------------------------------------------------------ 

\subsection{Root-lifting with divided differences}\label{srtsqr}

 Fix $\delta>0$, compute $p_h(y)$ for $y=x$ and $y=x-\delta$ (cf. (\ref{eqNRNIR})), 
   and at the cost of the evaluation of  $p(x)$ at  $2h$  points $x$ approximate  NIR$_{p_h}(x)=\frac{p'(x)}{p(x)}$ with a divided difference:
   \begin{equation}\label{eqNRNIRh} 
{\rm NIR}_{p_h}(x)\approx\frac{p_h(x)-p_h(x-\delta)}{p_h(x) \delta}=\frac{p_h'(y)}{p_h(x)}.  
 \end{equation} 
     
%This way we  bypass the numerical  instability of the  coefficients of the polynomials $p_h(y)$ (cf. \cite{MZ01,P22b}).    
%------------------------------------------------------------------------------

\begin{remark}\label{rertsq} 
Let $h=2^k$. The set of $2^k$-th roots of unity contains the sets of $2^i$-th roots of unity for $i=0,1,\dots,k-1$, and  one can recursively approximate the ratios NR$_{p_h}(x)$   and/or NIR$_{p_h}(x)$ for $h$ replaced by $2^i$ in (\ref{eqdndgnr}) and (\ref{eqNRNIRh}) and for $i=0,1,\dots$ 
% according to a fixed criterion.
\end{remark}

%------------------------------------------------------------------------------
\begin{observation}\label{obrtsqind} 
(i) Root-lifting preserves the index  $\#(D(0,1))$ of the unit disc. (ii)  $h$-lifting  (achieved with 
 root-squaring for $h=2^k$ and a positive integer $k$) maps the values  $i(D(0,1))$, $\sigma(D(0,1))$,  $\eta(D(0,1))$, and $r_j(0,p)$ for all $j$, that is, the  isolation of the unit disc $D(0,1)$, its softness, its rigidity   and root radii,  to their $h$-th powers.
\end{observation} 
        
\subsection{The distance to a closest root and  certified inclusion}\label{sdstnc}
 
\cite[Sec. 1.10]{M07} lists various
 bounds on 
$r_d$ and $r_1$ in terms of the coefficients of $p$,  in particular \cite{F916}:
   
$r_1\le 2\max_{i=1}^d|p_{d-j}/p_d|^{1/j}$ unless $p_d=0$ 
 and $r_d\ge  2\min_{i=1}^d|p_{j}/p_0|^{1/j}$
 unless $p_0=0$.
 
\noindent For a complex $c$ and positive integers $j$ and $h$, Eqn. (\ref{eqratio}) and Observation \ref{obrtsqind} combined
imply that
\begin{equation}\label{eqrtrdbnds}
r_d(c)\le d~ |{\rm NR}(c)|,
~{\rm and}~
r^h_j(0,p)=r_j(0,p_h).
\end{equation}
 The roots lying far from a point $c$
little affect NIR$(c)$, and we extend bounds 
(\ref{eqrtrdbnds})
  as follows. 
 
\begin{proposition}\label{thrtr}
  If  $\#(D(c,1))=\#(D(c,\theta))=m$ for $\theta>1$, then 
   $|{\rm NIR}(c)|-\frac{d-m}{\theta-1}<\frac{m}{r_d(c)}.$ 
\end{proposition}
 \begin{proof}
 Assume that $|z_j|>\theta$ if and only if $j>m$
and then deduce from
 Eqn. (\ref{eqratio})
  that 
 $|{\rm NIR}(c)|\le 
 \sum_{j=1}^m\frac{1}{|c-z_j|}+
 \sum_{j=m+1}^d\frac{1}{|c-z_j|}< \frac{m}{r_d(c)}+\frac{d-m}{\theta-1}$. 
  \end{proof} 
 
{\em  Certify inclusion} into the disc $D(0,\sigma)$ if $\rho_d(0,p_h)\le \sigma^h$,
 $\sigma>1$ and  $h>0$, although
this bound is extremely poor for worst case polynomials
$p$:
\begin{proposition}\label{thrtr1}
The ratio $|{\rm NR}(0)|$ 
 is infinite for  $p(x)=x^d-u^d$ and $u\neq 0$, while  $$r_d(c,p)=r_1(c,p)=|u|.$$
 \end{proposition}
 \begin{proof} 
% \begin{example}\label{exrtr} 
Notice that 
 $p'(0)=0\neq p(0)=-u^d$, $p(x)=\prod_{j=0}^{d-1}(x-u\zeta_d^j)$ for $\zeta_d$ of (\ref{eqdndgnr}).
 \end{proof}
%Such poor approximations only occur for a %narrow input class.  

Notice that
%\begin{equation}\label{eqratiorcp}
$\frac{1}{r_d(c)}\le |{\rm NIR}(c)|=\frac{1}{d}\Big|\sum_{j=1}^d \frac{1}{c-z_j}\Big|$
%\end{equation}  
by  virtue of 
(\ref{eqratio}), and so 
 approximation  
to the root radius $r_d(c,p)$ is poor if and only if severe cancellation occurs in the summation of the $d$ roots. It seems that such a cancellation only occurs for a very narrow class of polynomials $p$, and if so, then
 the above non-costly  test certifies inclusion for a very large class of polynomials $p$.\footnote{Shifts of the variable $p(x)\leftarrow t(x)=p(x-c)$
for random $c$ should further enlarge this class.} 

%------------------------------------------------------------------------------------------------

\subsection{Subdivision root-finding involves  exclusion}\label{ssbde} 

It is not enough for us to have non-costly certification of inclusion because
exclusion  occurs in more than 25\% of all e/i tests in a subdivision process: 
  \begin{proposition}\label{thdscrd}
  Let $\sigma_i$ suspect squares enter the
   $i$-th subdivision step
   for $i=1,\dots,t+1$. Let it stop at 
   $f_i$ of them, discard $d_i$   and further  subdivide $\sigma_i-d_i-f_i$
 of them.  Write  $\sigma_1:=4$, $\sigma_{t+1}:=0$,
  $\Sigma:=\sum_{i=1}^t\sigma_i$,
   $D:=\sum_{i=1}^td_i$, and
   $F:=\sum_{i=1}^tf_i$.
   Then $3\Sigma=4(D+F-1)$ and $D>\Sigma/4$.
   \end{proposition}
    \begin{proof}
   Notice that 
   $\sigma_{i+1}=4(\sigma_i-d_i-f_i)~{\rm for}~i=1,2,\dots,t+1$,
   sum these equations for all $i$, substitute $\sigma_1=4$ and $\sigma_{t+1}=0$,
  and obtain  $3\Sigma+4\ge 4D+4F$. 
Represent subdivision
   process by a tree with  at most $F\le m$ leaves  whose other nodes are components made up of suspect squares, to obtain  pessimistic lower bounds $\Sigma+1\ge 2m\ge 2F$ and hence $D\ge \Sigma/4 +1/2$. 
    \end{proof}

%------------------------------------------------------------------------------
 
%------------------------------------------------------------------------------

\section{Our e/i tests}\label{sei} 

%------------------------------------------------------------------------------------------------

\subsection{Overview}\label{seiovrv} 

%------------------------------------------------------------------------------- 

 Our Alg. \ref{algexclII}   of Sec. \ref{sbscei} and Alg. \ref{algexclIIm} of Sec. \ref{smdfd} are
deterministic  specifications of Alg. \ref{algexclIIr}. 
By virtue of Lemma \ref{leincl} all of them are correct  when they claim inclusion, but we will prove that they are  
 also  correct when output   exclusion.
  Namely, Algs. \ref{algexclII} and 
  \ref{algexclIIm}
   evaluate ${\rm NIR}(x)$ at $hq$ 
equally spaced points of the unit circle $C(0,1)$
for $h=O(\log(d))$ and certify exclusion if   at all these points
 $|{\rm NIR}_{p_h}(x)|\le \frac{1}{2\sqrt q}$ for $q>d$ or if  $|{\rm NIR}_{p_h}(x)|\le \frac{1}{3\sqrt q}$ for $q>m$, respectively, and otherwise certify inclusion.
 Our Example \ref{ex1} shows  that we cannot  devise such deterministic algorithms for $q<m$, but our Las Vegas %randomized 
e/i test evaluates polynomial $p(x)$ at $O(\log(d))$ points $x$ overall
under the Random Root Model.  
        
%------------------------------------------------------------------------------

\subsection{Basic deterministic  e/i test}\label{sbscei} 

%-------------------------------------------------------------------------------

To simplify exposition %initially 
assume 
in this subsection 
that we compute rather than approximate the values $p'(x)$, NR$_{p_h}(x)$, and NIR$_{p_h}(x)$  and that $r_d(0,p)\le \sigma$. In the next  specification of Alg. \ref{algexclIIr}  write $x_i:=\zeta^{i-1}$ for 
$i=1,\dots,q$, $\zeta$ of  (\ref{eqdndgnr}), and  $q>d$.
 
  \begin{algorithm}\label{algexclII} {\em Basic deterministic  e/i test}.   
%\begin{description} 
     
INPUT: a black box polynomial 
$p(x)$ of a degree $d$
given by an oracle for its evaluation
 and $\sigma$,  $1<\sigma<\sqrt 2$.
% and an upper bound $q$ on the number of nonzero coefficients of $p'$, $k=d$ if no stronger bound is available.
 
 INITIALIZATION:  Fix two integers $q>d$ and $h:=\lceil\log_{\sigma}(1+2d\sqrt q)\rceil$, such that $\sigma^h\le 1+2d\sqrt q$. 
 
  COMPUTATIONS: For  $g=0,1,\dots,q-1$ recursively compute  the values
\begin{equation}\label{eqvg}  
  v_g:=\Big|\frac{p'_h(\zeta^g)}{p_h(\zeta^g)}\Big|.
\end{equation}    
 Stop and certify $\sigma$-soft inclusion  
 \#$(D(0,\sigma))>0$ if   $1+d/v_g\le \sigma^h$ for an integer $g<q$.
Otherwise stop and certify exclusion, that is, \#$(D(0,1))=0$.
%\end{description} 
\end{algorithm} 
    
%\begin{equation}\label{eqsgmtht}
%k\ge \log(\max\{\log_{\sigma_-}(\sigma),\log_{\theta_-}(\theta)\}). 
%\end{equation}

%  this is satisfactory for us according to  Sec. \ref{scmpltngls}.
          
\noindent   
 Alg. \ref{algexclII} outputs  $\sigma$-soft inclusion  correctly by virtue of Lemma \ref{leincl}.
 
Next we will prove that it also {\em correctly claims exclusion}.
 
 %Our exclusion test relies on the following lemma. 
      
\begin{lemma}\label{letl} 
A polynomial $p$ has no roots in the unit disc 
$D(0,1)$ if $|p(\gamma)|>2 \max_{w\in D(0,1)}|p'(w)|$ for a complex $\gamma\in C(0,1)$.         
\end{lemma} 
\begin{proof}
  By virtue of Taylor-Lagrange's theorem  
 $p(x)=p(\gamma)+(x-\gamma)p'(w)$, for any pair of $x\in D(0,1)$ and $\gamma\in C(0,1)$
 and for some $w\in D(0,1)$. Hence $|x-\gamma|\le 2$, and so $|p(x)\ge |p(\gamma)|-2 ~|p'(w)|.$
Therefore, $|p(x)|>0$ for $\gamma$ satisfying the assumptions of the lemma and for any $x\in D(0,1)$.
\end{proof}      

\begin{proposition}\label{thvrs} %{\rm [Correctness of exclusion test by Alg. \ref{algexclII}.]}    
  $\#(D(0,1))=0$ for a black box polynomial $p(x)$ of a degree $d$ if 
  \begin{equation}\label{eqNIR}
\frac{1}{v}>2\sqrt {q}~{\rm for}~q>d,~v:=\max_{g=0}^{q-1} v_g,~{\rm and}~v_g~{\rm of~(\ref{eqvg})}.
\end{equation}
\end{proposition}
 \begin{proof} 
Bounds (\ref{eqvg}) and  (\ref{eqNIR}) together imply that  $|p'(\zeta^{g})|/v\le |p(\zeta^{g})|$
 for $g=0,\dots,q-1$ and hence
$ 2\sqrt {q} ||(p'(\zeta^{g}))_{g=0}^{q-1}||_{\infty}<  \widehat P~{\rm for}~\widehat P:=
||(p(\zeta^{g}))_{g=0}^{q-1}||_{\infty}.
$
Combine this bound with (\ref{eqnrms12}) to obtain
\begin{equation}\label{eqp'Vp}
2 ||(p'(\zeta^{g}))_{g=0}^{q-1}||_2<\widehat P.
\end{equation}
 Pad the   
 coefficient vector of the polynomial $p'(x)$  with $q-d-1$ initial coordinates 0 to arrive at 
   $q$-dimensional vector
   ${\bf p'}$. Then  $(p'(\zeta^{g}))_{g=0}^{q-1}=F{\bf p'}$ for the $q\times q$ matrix 
$F:=(\zeta^{ig})_{i,g=0}^{q-1}$   of discrete Fourier transform.
Substitute $F{\bf p'}$ for $(p'(\zeta^{g}))_{g=0}^{q-1}$ in   (\ref{eqp'Vp}) and obtain
 $2||F{\bf p'}||_2< \widehat P$.  
Hence  
$2\sqrt q||{\bf p'}||_2< \widehat P$ because
the
 matrix $\frac{1}{\sqrt q}F$  is unitary 
 (cf. \cite{GL13,P01}).
 It follows (cf. (\ref{eqnrms12}))
 that
% \begin{equation}\label{eqnrm1}
$2||{\bf p'}||_1<\widehat P.$
 %\end{equation}
 Since $\max_{x\in D(0,1)}|p'(x)|\le ||{\bf p'}||_1$, deduce that 
$2\max_{x\in D(0,1)}|p'(x)|<\widehat P=||(p(\zeta^{g}))_{g=0}^{q-1}||_{\infty}$.  Complete the proof 
 %of the theorem
by combining this bound with  Lemma \ref{letl}.   
 \end{proof} 
   
%------------------------------------------------------------------------------

If Alg. \ref{algexclII} 
%in the  case where it 
outputs exclusion, then $1+\frac{d}{v_g}> \sigma^h$ for $g=0,1,\dots, q-1$. Hence
$\frac{1}{v_g}>2\sqrt q$
 because
$\sigma^h> 1+2d\sqrt q$ according to initialization 
of Alg. \ref{algexclII}. 
Therefore,
the assumptions of  Prop. \ref{thvrs} hold for $p_h(x)$ replacing $p(x)$, and hence
the disc $D(0,1)$ contains no zeros of $p_h(x)$, but then 
it contains no zeros of $p(x)$
as well (see Observation \ref{obrtsqind} (iii)), and so  exclusion is correct. 

Readily verify the following estimates.

\begin{proposition}\label{theitst}
 Alg. \ref{algexclII}  is a $\sigma$-soft e/i test,  certifying  inclusion at the cost of evaluation of polynomial $p(x)$ at $2hj$
   points $x$ or  exclusion at  the cost of evaluation  at $2hq$    points $x$ for  three integers $h$, $j$, and $q$  such that  $1\le j\le q$, $q>m$, and $h:=\lceil\log_{\sigma}(1+2d\sqrt q)\rceil$,  $h=O(\log(d))$ for $\sigma-1$
 exceeding a positive constant. 
\end{proposition} 
\subsection{A faster deterministic  e/i test}\label{smdfd} 

Consider Problem 1$^*_m$
for a $\theta$-isolated disc $D$. Deduce from  equation (\ref{eqshft}) 
that the impact of the $d-m$ external roots on the values of NIR$(x)$ for $x\in D$
is negligible if  $\theta$ is sufficiently large and then 
modify  Alg. \ref{algexclII} letting it  evaluate NIR  just at $q=m+1$ rather than  $d+1$ points and still obtain correct output. We prove this  
 already where
$\log(\frac{1}{\theta})=O(\log(d))$; if   the unit disc $D(0,1)$ was initially just isolated, then a $O(\log(d))$-lifting ensures  desired stronger isolation.  

Next we supply relevant estimates.

  \begin{algorithm}\label{algexclIIm} {\em A faster deterministic e/i test}.   
%\begin{description} 

INPUT: a black box polynomial $p(x)$ of a degree $d$, two integers $m$ and $q>m$, and  real
$\sigma$ and $\theta>1$  such that
             $1<\sigma<\sqrt 2$, $$|z_j|\le 1~{\rm for}~ j\le m,~ |z_j|>\theta~{\rm for}~j> m,$$  
$$\frac{d-m}{\theta^h-1}\le \frac{1}{6\sqrt q}~{\rm for}~h:=\lceil\log_{\sigma}(1+6m\sqrt q)\rceil~{\rm such~that}~\sigma^h\ge 1+6m\sqrt q.$$
 
  COMPUTATIONS: Recursively compute the values $v_g$ of (\ref{eqvg})
    for $g=0,1,\dots,q-1$. 
Certify $\sigma$-soft inclusion, that is, 
   \#$(D(0,\sigma))>0$, if   $v_g\ge\frac{1}{3\sqrt q}$ for
 some integer $g<q$.
   Otherwise certify exclusion, \#$(D(0,1))=0$.
%\end{description}  
\end{algorithm} 
 
\noindent Prop. \ref{thrtr}, for $p_h$ replacing $p$, and the assumed bound $\frac{d-m}{\theta^h-1}\le \frac{1}{6\sqrt q}$   together imply
%that 
$$\frac{m}{r_d(\zeta^g,p_h)}\ge v_g- \frac{d-m}{\theta^h-1}\ge v_g-\frac{1}{6\sqrt q}~{\rm for~all}~g.$$
 Alg. \ref{algexclIIm} claims inclusion if $v_g\ge\frac{1}{3\sqrt q}$ for some $g$, but then 
 $$\frac{m}{r_d(\zeta^g,p_h)}\ge \frac{1}{6\sqrt q},~r_d(\zeta^g,p_h)\le 6m\sqrt q,$$
 and~hence~
$$r_d(0,p_h)\le 1+6m\sqrt q~\le \sigma^h~{\rm for}~|\zeta|=1.$$  Now
Observation \ref{obrtsqind} (iii) implies that  {\em Alg. \ref{algexclIIm}  claims inclusion 
   correctly}. 
   
Next we prove  that it also {\em claims exclusion correctly}.

%------------------------------------------------------------------------------

 \begin{proposition}\label{thvrsm1} %{\rm [Correctness of exclusion test by Alg. \ref{algexclII}.]}
Let a polynomial $p(x)$ of a degree $d$ have exactly $m$ roots  in the disc $D(0,\theta)$   for 
$$\theta^h\ge 1+6(d-m)\sqrt q,~h:=\lceil\log_{\sigma}(1+6m\sqrt q)\rceil,~{\rm  and~an~integer}~q>m$$
  and let   $v< \frac{1}{3\sqrt q}$ for
  $v$ of~(\ref{eqNIR}).
  Then $\#(D(0,1))=0$.
%\end{equation} and $\zeta_{K}4 of  (\ref{eqzt}).
\end{proposition}
\begin{proof}                 
 Let $p(x)$ and its factor $f(x)$
 of degree $m$  share the sets of  their zeros in  $D(0,\theta)$. Then 
(\ref{eqratio}) implies that
$\Big|\frac{f'(x)}{f(x)}\Big|<\Big|\frac{p'(x)}{p(x)}\Big|+
\frac{d-m}{\theta-1}~{\rm for}~|x|=1,$
and so $\max_{g=0}^{q-1}\Big|\frac{f'(\zeta_q^g)}{f(\zeta_q^g)}\Big|< v+
\frac{d-m}{\theta-1}$. 
Recall that $\frac{d-m}{\theta-1}\le \frac{1}{6\sqrt {q}}$  and  $v< \frac{1}{3\sqrt q}$ by assumptions. 

Combine these bounds to obtain
 $\max_{g=0}^{q-1}\Big|\frac{f'(\zeta_{q}^g)}{f(\zeta_{q}^g)}\Big|< \frac{1}{2\sqrt {q}}$.
To complete the proof, apply Prop. \ref{thvrs}
for $f(x)$ replacing $p(x)$ and for $m$ replacing $d$.
 \end{proof} 
 
%--------------------------------------------------------------------------------
    
Readily extend Prop. \ref{theitst} 
 to  Alg. \ref{algexclIIm}
by decreasing its  bounds on  $q$ and $h$  
provided that
 $\#(D(0,\theta))=m$ for 
$\theta\ge 1+6(d-m)\sqrt q$. 

\begin{proposition}\label{theitstm}
Alg. \ref{algexclIIm}  is a $\sigma$-soft e/i test: it certifies either  inclusion  or  exclusion at the cost of evaluation 
 of $p(x)$ at $2hj$ or $2hq$ points $x$, respectively, 
 for  three integers $h$, $j$, and $q$  such that  $1\le j\le q$, $q>m:=\#(D(0,\theta)),~\sigma^h>1+6m\sqrt q$, 
$\theta\ge 1+6(d-m)\sqrt q$, and $h:=\lceil\log_{\sigma}(1+6m\sqrt q)\rceil$, so that  $h=O(\log(m))$ if $\sigma-1$
 exceeds a positive constant, and in this case we only need $O(m\log(m)$ queries of the oracle to perform an e/i test. 
\end{proposition}

%------------------------------------------------------------------------------- 

\subsection{A randomized e/i test}\label{seirndalg} 
 
{\bf 7.4.1. Introductory comments.} Would Algs.
 \ref{algexclII} and \ref{algexclIIm} stay correct if we apply them 
 at fewer evaluation points? They have consistently stayed correct for $q$ of order $\log^2(d)$ in extensive numerical tests of \cite{IP20,IP22} applied to $p(x)$  
 rather than to $p_i(x)$ for $i>0$. The tests
 reported exclusion 
 where the values $|s_{h,q}|$ were reasonably small  for 
% \begin{equation}\label{equ7.12.2}
$s_{h,q}: =\frac{1}{q}\sum_{g=0}^{q-1}\zeta^{(h+1)g}~\frac{p'(\zeta^g)}{p(\zeta^g)},~{\rm for}~h=0,1,2$ (cf. Eqn. \ref{equ7.12.2} in Part III), 
%\end{equation}  
and reported inclusion
otherwise. Clearly,
$|s_{h,q}|\le \delta$ for all $h$ if $|p(\zeta^g)|\le \delta/q$ for $g=0,1,\dots,q-1$,
while inclusion is certified unless $|p(\zeta^g)|$ is small for all $g$. 
 
The test results of \cite{IP20,IP22}, however, cannot be extended to 
 worst case input 
polynomials  $p(x)$
already for $q=m-1$.

%-------------------------------------------------------------------------------

 \begin{example}\label{ex1}
Let  $q=m-1$, $p(x)=(1-ux^{d-m})(x^m -mx+u)$,  $u\approx 0$ but $u\neq 0$. (i) First let $m=d$. Then $p(x)=(1-u)(x^d-dx+u)$, $p'(x)=(1-u)d(x^{d-1}-1)$ and NIR$(x)=0$ for $x$ being any $(d-1)$-st roots of unity. Then our e/i test for $q=d-1$ would imply that $\#(D(0,1))=0$,
 while $p(y)=0$ for $u\approx 0$ and some $y\approx 0$. (ii) For $0<m<d$
notice that $p'(x)=uf(x)+m(1-ux^{d-m})(x^{m-1}-1)$ where
$f(x)=(d-m)
x^{d-m-1}(x^m -mx+u)$
and so $uf(x)\approx 0$
while $1-ux^{d-m}\approx  1$ for $|x|\le 1$ and $u\approx 0$. Therefore,
$p'(x)\approx 0$ where 
$x$ is any $(m-1)$-st root of unity, and then  again extension of our e/i test to $q=m-1$ would imply that $\#(D(0,1))=0$,
 while $p(y)=0$ for $u\approx 0$ and some $y\approx 0$.
 \end{example}
 
 Clearly, \cite{IP20,IP22} have never encountered worst case inputs in their extensive tests.  
  Next, in line with this observation,
   we  prove 
that our e/i tests, 
which evaluate $p(x)$  at 
$O(\log(d)$ points, are correct  whp 
  under the Random Root Model. 

{\bf 7.4.2. Our algorithm and correctness proof.} We apply
 Alg. \ref{algexclIIr} 
  for $q=1$
 and evaluate NIR$_{p_h}(x)$ at a single random point  $x_1\in C(0,1)$, by means of evaluation of polynomial $p(x)$ at $2h$ 
   points $x$ for $h=O(\log(d))$. We refer to this algorithm as {\bf Alg. \ref{algexclIIr}(rrm).}
 The algorithm is correct 
 by virtue of Lemma
  \ref{leincl}
  where it claims 
   soft inclusion. 
 Next  we prove that it is correct  whp 
  under the Random Root Model where it outputs exclusion. 

  \begin{lemma}\label{thrndrts}
  Fix any constant $\beta>1$ and a sufficiently large integer $h$ of order $\log(d)$. Then under the Random Root Model, 
Alg. \ref{algexclIIr}(rrm) 
is correct with a probability at least 
 $1-P$, where 
\begin{equation}\label{eqerrprb0}  
 P\le  \frac{\gamma d}{\pi R^2}~{\rm for}~\gamma:=\pi \max\Big\{\frac{\beta^2}{\sigma^2},O\Big(\frac{1}{\beta^{2h}}\Big)\Big\}.
 \end{equation}
  \end{lemma}
   
 \begin{proof} 
 By virtue of Lemma \ref{leincl} {\em Alg. \ref{algexclIIr}(rrm) outputs  $\sigma$-soft inclusion  correctly} and, therefore, can be incorrect only where it outputs exclusion for a disc $D(0,1)$ containing a root.  
 
  Let this root be $z_1$, say.  
Then fix the values of the random variables $z_2,\dots,z_d$ and  $x_1\in C(0,1)$ and
 let $\gamma$ denote the area (Lebesgue's measure) of  the set $\mathbb S$ of the values of the variable $z_1$ for which
\begin{equation}\label{eqNIRsgmh}
v_1=|{\rm NIR}_{t_h}(x_1)|< \frac{d}{\sigma^h-1}.
\end{equation} 
Then  
Alg. \ref{algexclIIr}(rrm)
%applied to the disc %$D(c,\rho)$ 
is incorrect with  a probability at most
\begin{equation}\label{eqerrprb}
 P=\frac{\gamma d}{\pi R^2}
 \end{equation} 
 where the factor of $d$ enables us to cover the cases where
 the value of 
 any of the $d$ iid random variables $z_1,\dots,z_d$, and not necessarily $z_1$, lies in the disc $D(0,1)$.

Fix two values  $\bar z,z\in\mathbb S$ such that $$\bar z={\rm argmax}_{y\in \mathbb S}~|y|~{\rm  and}~z={\rm arg max}_{y\in \mathbb S}~|y-\bar z|$$
 and notice that
 \begin{equation}\label{eqgmmz}
  \gamma\le \pi \min 
 \{|\bar z|^2|,|\bar z-z|^2\}. 
\end{equation}   
   
To estimate $|\bar z-z|$,
first write $y:=z^h\in D(0,1),~\bar y:=\bar z^h\in D(0,1)$ and prove that
\begin{equation}\label{eqdltnh} 
 |\Delta|=|\bar y-y|<\nu_h~{\rm for}~ \nu_h:=\frac{8d}{\sigma^h-1}.
\end{equation}

Write NIR and NIR$'$ to denote the values NIR$_{t_h}(x_1)$ for $z_1=z$ and 
 $z_1=\bar z$, respectively, recall (see (\ref{eqNIRsgmh})) that
$\max\{|{\rm NIR}|,|{\rm NIR'}|\}< \frac{d}{\sigma^h-1}$, and 
obtain
\begin{equation}\label{eqdltth} 
|\Delta_{t_h}|< \frac{2d}{\sigma^h-1}~{\rm for}~\Delta_{t_h}:={\rm NIR}'-{\rm NIR}. 
\end{equation}
Now let $y_1:=z_1^h,  \dots,y_d=z_d^h$ denote the $d$ zeros of $t_h(x)$
and deduce from (\ref{eqratio})
that
$$|{\rm NIR}|=\Big|\frac{1}{x_1-y}+S\Big|~{\rm  and}~|{\rm NIR}'|=\Big|\frac{1}{x_1-\bar y}+S\Big|~{\rm for}~ S:=\Big|\sum_{j=2}^d\frac{1}{x_1-y_j}\Big|.$$ 
   Hence  
\begin{equation}\label{eqNIRxy'}  
|\Delta_{t_h}|=|{\rm NIR}-{\rm NIR}'|
  =\Big|\frac{1}{x_1-y}-\frac{1}{x_1- \bar y}\Big|=\frac{|\Delta|}{|(x_1-y)(x_1-\bar y)|}.
    \end{equation}  
  
  Recall that $x_1\in C(0,1)$, while $y,\bar  y\in D(0,1)$, and so
$\max\{|x_1-y|,|x_1-\bar y|\}\le 2.$

 Combine this inequality with  (\ref{eqNIRxy'}) 
 and deduce that
 $|\Delta_{t_h}|\ge 
  |\Delta|/4.$
  
  Combine the latter
  bound with (\ref{eqNIRxy'})
  and obtain  (\ref{eqdltnh}).
  
  Now fix a large integer $g$, substitute $\bar y=\bar z^h$ and $y=z^h$
  for $h=g$ and $h=g+1$, 
  and obtain
  $$\lambda_{g+1}:=z^{g+1}-\bar z^{g+1},~
 \lambda_{g}:= z^g-\bar z^g,~{\rm and~so}~z^{g+1}=z\bar z^g+z\lambda_g,~
 |\lambda_h|\le \nu_h~{\rm for}~h=g,g+1.$$
  Therefore, $\lambda_{g+1}+\bar z^{g+1}=\lambda_{g}z+\bar z^{g}z$, and so
$$z-\bar z=\frac{\lambda_{g+1}-z\lambda_g}{\bar z^g},~
|\bar z-z|\le \frac{|\lambda_{g+1}|+|z|\cdot|\lambda_g|}{|\bar z|^g}<\frac{\nu_{g+1}+|z|\nu_g}{|\bar z|^g}=O\Big(\frac{d}{(\sigma|\bar z|)^g}\Big).$$
Hence
$|z-\bar z|=O(1/\beta^g)$ for 
a fixed $\beta>1$. Combine this bound with
(\ref{eqerrprb}) and (\ref{eqgmmz}), choosing sufficiently large $g$ of order $\log (d)$ unless $\sigma|\bar z|< \beta$.
 
 Combine this bound with (\ref{eqerrprb}) and (\ref{eqgmmz}), recall that $h=g$ or $h=g+1$, and obtain the lemma.
\end{proof}

  For a fixed  $\sigma>1$, e.g., for
  $\sigma=1.2$,
choose a 
 constant $\beta>1$ 
and a sufficiently large $g=O(\log(d))$
such that $\gamma<\pi$ 
in (\ref{eqerrprb0}).
Then under the Random Root Model the  probability $P$ of the output error for Alg. \ref{algexclIIr}(rrm)
is less than a constant
$\nabla<1$. 
We can decrease the error probability below 
$\nabla^v$ by applying
the algorithm $v$ times,
in particular below $d^a$
for any fixed positive constant $a$  and for $v=O(\log(d))$.
We arrive at the following 

\begin{corollary}\label{coerrprb}
Under Random Root Model
one only needs to query an evaluation oracle to perform  an e/i test
correct;y whp.
\end{corollary}

%------------------------------------------------------------------------------

%\begin{remark}\label{reerrprb} Based on   %different techniques we %can devise Las Vegas %randomized e/i test which %uses $hq$ queries for 
%$h=O(\log(d))$, is always %correct when it outputs %inclusion, randomizes % the choice of evaluation points on the circle $C(0,1)$ and not of the zeros of $p$, and fails with a probability at most $\frac{\ln(q)}{q}$.
%This error probability bound, however, is  too large for us because it grows proportionally to the number of e/i tests,   more than $m$ of which can be required already for a single subdivision step.
%\end{remark}

%------------------------------------------------------------------------------

\begin{remark}\label{recchlft}
It is tempting to combine 
root-lifting or root-squaring with the algorithms of Part III to obtain an
alternative randomized e/i test supporting Cor. \ref{coerrprb}, but for that we would need  to raise disc isolation from order of $1+1/m$ to a constant; this would require too costly lifting of order $m$.
\end{remark} 
     
%------------------------------------------------------------------------------

\subsection{Counting  the evaluation queries of our root-finders}\label{sovrlhla}     
     
%--------------------------------------------------------------------------------

Combine Cors.  \ref{cosgmz}
and \ref{coerrprb} and Prop. \ref{theitstm} and obtain

\begin{corollary}\label{coeitstm}
We can  reduce root-finding to deterministic  evaluation of $p(x)$ at $O(m^2b\log(d))$  points as well as to the evaluation at 
  $O(mb\log^2(d))$ points  under  the Random Root Model.
\end{corollary}

%------------------------------------------------------------------------------
 
 \section{Precision of computing}\label{sprcs11} 
 
%------------------------------------------------------------------------------

\subsection{Doubling computational precision towards its a posteriori estimation}\label{sdblpst11} 

Given an algorithm $\mathbb A$ for black box  polynomial computations  and target tolerance TOL to the output errors,
how should one choose computational precision? 
Here is a simple {\em practical recipe}
from \cite{B96,BF00,BR14} and   MPSolve for obtaining  {\em a posteriori estimate} within a factor of two from 
optimal:   first apply algorithm  $\mathbb A$ with  a lower  
precision, then recursively double it and stop where an output error  decreases below TOL.
 
%------------------------------------------------------------------------------

\subsection{A priori  estimates for computational precision}\label{sapri11}

In the rest of this section we deduce 
{\em a priori estimate}
of $O(b\log(d))$ bits
 for computational precision. Namely, we prove that this precision 
supports correctness of our  subdivision root-finders; our analysis includes 
some advanced  techniques of \cite{LV16}. 
  
Estimating computational
precision of our soft e/i tests we assume that they are applied to the unit disc $D(0,1)$ because we 
 can extend the results to any disc  $D(c,\rho)$
 by applying map (\ref{eqshft}) and increasing the upper bounds on the precision by a factor of $\log_2(1/\rho)$. This factor  is at most $b$ because we 
 only handle discs  of radii
 $\rho>\epsilon=1/2^b$
 until we  reach
  $\epsilon$-discs and  stop. Therefore, our bound  $O(\log(d))$ on the computational precision of our e/i tests for the disc $D(0,1)$ will imply the bound $O(b\log(d))$ for  any disc.
 
 We also make {\bf Assumption  
 0} that  we can request and obtain from evaluation oracle (black box subroutine) the values of $p$
with a relative error bound within 
$1/d^{\nu}$ for any fixed constant $\nu>0$
of our choice; we can represent these values with the bit-precision  $\log_2(d)+\log_2(\nu)$.

  In  our e/i tests we only need to compare the values  $|{\rm NIR}_{t_h}(x)|$
 at some complex points $x=v_d\ge 1/d^{\beta}$
 for a constant $\beta>0$, and next we  estimate that under  Assumption 0 we can indeed do this by
performing our  computations with a precision of $O(\log(d))$  bits.
 
Multiplication with a precision $s$ contributes a relative error at most $2^s$
(cf. \cite[Eqn. A.3 of Ch. 3]{BP94}) and
otherwise just
sums the relative error bounds of the factors for we ignore the dominated impact of  higher order errors.   Hence we deduce that $h=O(\log(d))$
multiplications for root-lifting to their $h$th powers increase
these bounds by at most  a factor of $h$, which is immaterial  since 
we allow relative errors of order $1/d^{\nu}$ for a constant $\nu>0$ of our choice.  

We apply this analysis to
 approximation of   NIR$(x)$
 for $p_h(x)$ based on 
%expression 
(\ref{eqNRNIR}) and under the following \\
{\bf  Assumption 1:} Whp all $q$ evaluation points  $x\in C(0,1)$ of an e/i test for the unit disc $D(0,1)$
lie at the
distance at least  $\phi:=\frac{2\pi}{mqw}$ from all roots for any fixed $w>1$ of our choice.
 
This assumption holds  whp because we apply our  e/i tests under random rotation $x\mapsto vx$  for  random variable $v$ specified in our next proposition. 
 
\begin{proposition}\label{thisass}  
Let $\phi_g:=\min_{j=1}^{d}|v\zeta^g-z_j|$ and let $\phi:=\min_{g=0}^{q-1} \phi_g$ for the roots $z_j$, $\zeta$ of  (\ref{eqdndgnr}) and random variable $v$ sampled from a fixed arc of  the circle  $C(0,1)$ of length $2\pi/q$  under the uniform probability distribution on that arc.
Let $i(D(0,1))=m$.  Then
$\phi\le \frac{2\pi}{mqw}$  for $w>1$ with a probability at most $\frac{\pi}{w}$. 
\end{proposition}
\begin{proof}
Let $\psi_j$ denote the length of the arc $C(0,1)\cap D(z_j,\frac{2\pi}{mqw})$, $j=1,2,\dots,d$. Then $\psi_j=0$ for at least $d-m$ integers $j$,
and $\max_{j=1}^d\psi_j \le \frac{\pi^2}{mqw}$ for sure. Hence 
$\sum_{j=1}^d\psi_j < \frac{\pi^2}{qw}$, and so 
$\phi_g\le \frac{2\pi}{mqw}$ for a fixed $g$ 
with a probability
at most $\frac{\pi}{qw}$, and the lemma follows.
\end{proof}
 
 Next,  by applying Prop. \ref{thisass} for a sufficiently large $w=d^{O(1)}$, we prove that a precision
of $O(\log(d))$ bits supports our e/i tests
under 
Assumptions 0 and  1.
We will
 (i) prove that
$|{\rm NIR}(z)|=d^{O(1)}$, (ii) will approximate this value within $d^{\nu}\delta$ for any fixed constant $\nu$ and for $\delta$
 of (\ref{eqNRNIR}) chosen
such that    
 $\log(\frac{1}{\delta})=O(\log(d))$, and (iii) will estimate that 
such a bound withstands the impact of rounding errors of computations for (\ref{eqNRNIR}). 
  
 Eqn. (\ref{eqratio})
 immediately implies
\begin{proposition}\label{thnirprc11} 
Under Assumption 1,
$\max_{g=1}^q|{\rm NIR}(\zeta^g)|\le
{d^{\eta}}$ for $\eta=O(1)$.  
%\end{equation} 
\end{proposition}
\begin{corollary}\label{coprc011}
 A precision of 
$\lceil(\eta+\beta)\log_2(d)\rceil$
    bits is sufficient  to represent 
 ${\rm NIR}(x)$ within $\frac{1}{d^{\beta}}$ for $g=1,2,\dots,q$.
 \end{corollary}
          
Instead of ${\rm NIR}(x)=\frac{ t'(x)}{t(x)}$  for $x\in C(0,1)$ we actually
approximate $\frac{ t'(y)}{t(x)}$ where $|y-x|\le \delta$ and $t'(y)$ is equal to the divided difference of
  (\ref{eqNRNIR}).  Thus we
shall increase the above error bound  by adding upper bounds   
 $\alpha$ on $|\frac{ t'(x)}{t(x)}-\frac{ t'(y)}{t(x)}|$ and 
$\beta$ on the rounding error of computing $
\frac{t(x)-t(x-\delta)}{\delta t(x)}$, 
 for $x=\zeta^g$ and
$g=1,2,\dots,q$. 

In the proof of the next theorem  we use the following lemma, which is \cite[Fact 3.5]{LV16}. 
  
\begin{lemma}\label{lefct3511}  
  For  $t(x)=\prod_{j=1}^d(x-y_j)$ and a non-negative integer $j\le d$ it holds that
$$t^{(j)}(x)=j!~t(x)\sum_{S_{j,d}}~\prod_{j\in S_{j,d}} \frac{1}{x-y_j}$$
where the summation
$\sum_{S_{j,d}}$ is over all subsets $S_{j,d}$ of  the set $\{1,\dots,d\}$ having cardinality $j$. 
\end{lemma}

\begin{proposition}\label{thdltprc0111}
Under the assumptions of Prop. \ref{thnirprc11} it holds that 
$$|t(x)\alpha|=\Big|\frac{t(x)-t(x-\delta)}{\delta}-t'(x)\Big|\le \Big|t(x)\frac{ (d\xi')^2\delta}{1-d\xi'\delta}\Big|~{\rm for}~\xi'=\frac{1}{\rho}.$$
\end{proposition}  
 
\begin{proof}
The
claimed bound on $|\xi'|$ follows
from  (\ref{eqratio}). It remains to 
apply the  first five lines of 
the proof of \cite[Lemma 3.6]{LV16} with $f$, $\alpha$, $n$, and $\xi$ replaced by
 $t$, $-\delta$, $d$, and $w$, respectively. 
Namely, first obtain from Taylor's expansion that 
 $t(x)-t(x-\delta)=\sum_{j=0}^{\infty}\frac{\delta^j}{j!}t^{(j)}(x)-t(x)$.
 
 Substitute the expressions of Lemma \ref{lefct3511} and obtain
 $$t(x)-t(x-\delta)=\delta t'(x)+\sum_{j=2}^{\infty} \delta^j t(x)\sum_{S_{j,d}}~\prod_{j\in S_{j,d}}\frac{1}{x-y_j}.$$
 Combine this equation with the assumed bounds on 
 $\frac{1}{|x-y_j|}$ 
 and deduce that 
 $$\Big|\frac{t(x)-t(x-\delta)}{\delta}-t'(x)\Big|\le \Big|\frac{f(x)}{\delta}\sum_{j=2}^{\infty} \delta^jw^j d^j\Big|\le 
 \Big|f(x)\frac{(d~w)^2~\delta}{1-\delta~d~w}\Big|.$$ 

\end{proof}
 
\begin{corollary}\label{coprc11} We can ensure that
  $\alpha<1/d^{\bar\eta}$ for
 any  fixed constant $\bar\eta$ by choosing  $\delta$ of order $d^{O(1)}$, represented with a precision of $O(\log(d))$ bits.
 \end{corollary}
  
%Assume   $\delta$ and $x$
%  are fixed such that  $|x|=|x-\delta|=1$ and estimate precision supporting such approximation of $x\frac{t(x)-t(x-\delta)}{t(x) \delta}=
%  \frac{\delta}{x}(1-\frac{t(x-\delta)}{t(x)})$.
 
 \begin{proposition}\label{therrnbl11} 
Suppose that the values $t(y)$ have been  computed by an oracle within a relative error bounds $\nabla(y)$ 
for $y=x$, $y=x-\delta$, and
$\delta$ of Cor. \ref{coprc11}.
%\footnote{Here we ignore the nominal impact of rounding errors  in multiplication by $x$ and  division by $t(x)$ and $\delta$.} 
  Then one can ensure  that $\beta<\frac{1}{d^{\bar\eta}}$ for any fixed constant $\bar\eta$ by choosing 
 a proper $\nabla(y)$ of order $1/d^{O(1)}$,
 represented with a precision of $O(\log(d))$ bits, which we can do under Assumption 0.
\end{proposition}

\begin{proof}
Write  
$$\beta\delta=\frac{t(x-\delta)(1+\nabla(x-\delta))}{t(x)(1+\nabla(x))}-\frac{t(x-\delta)}{t(x)}=\frac{t(x-\delta)}{t(x)}~\frac{\nabla(x)-\nabla(x-\delta)}{1+\nabla(x)}.$$

 Taylor-Lagrange's formula  implies that
  $$\frac
{t(x-\delta)}{t(x)}=1+\frac{t(x-\delta)-t(x)}{t(x)}=1+\delta\frac{t'(u)}{t(x)},~{\rm 
for}~u\in[x-\delta,x].$$ Hence $|\frac
{t(x-\delta)}{t(x)}|\le 1+|\delta\alpha'|+|\frac{t'(x)}{t(x)}|$ for $\alpha'=\frac{t'(u)-t(x)}{t(x)}$. 

By extending Cor. \ref{coprc11}  obtain 
$|\alpha'|\le \frac{1}{d^{O(1)}}$
and  deduce from (\ref{eqratio}) and Assumptions 0 and 1 that 
$|\frac{t'(x)}{t(x)}|\le
 \frac{1}{d^{O(1)}}$
 (cf. the proof of Prop. \ref{thnirprc}). 
Hence 
$$w:=\Big|\frac{t(x-\delta)}{t(x)}\Big|\le d^{O(1)}~{\rm
and}~|\beta\delta|\le |w|\frac{|\nabla(x)|+|\nabla(x-\delta)|}{1-|\nabla(x)|}.$$

Now choose $\nabla(y)$ such that $|\nabla(y)|\le\frac{|\delta|}{20|w|}$ for $y=x$
and $y=x-\delta$.
 Then verify that
$$\log\Big(\frac{1}{|\nabla(y)|}\Big)=O(\log(d)),~ 1-|\nabla(x)|\ge 1-\frac{|\delta|}{20|w|}>\frac{19}{20},~{\rm and}~ |\delta\beta|< \frac{|\delta|}{8}.$$                                                             
\end{proof} 

Combine Cors. \ref{coprc011} and \ref{coprc11} and Prop. \ref{therrnbl11} to obtain
\begin{corollary}\label{coprcall11}
Given an oracle for the evaluation of a  polynomial $p(x)$
of a degree $d$ 
with a relative error  of any order in $1/d^{O(1)}$ (this holds under Assumption 0), we can perform our black box subdivision root-finders with 
a precision of $O(\log(d))$ bits.
\end{corollary} 

\begin{remark}\label{reprc}
We derived our precision bound $O(\log(d))$ (in the case of root-finding in the unit disc $D(0,1)$) based on operating entirely with NIR$(x)$.
For comparison, if we relied on computing $p(x)$ and on the  bound   
 $\min_{x:~|x|=1}(|p(x)|/\sum_{i=0}^d|p_i|)\ge (\frac{\theta-1}{2\theta})^d$ of \cite[Eqn. (9.4)]{S82},
 we would have only needed  a bound of  order $m^2d$ on the computational precision of our  root-finders.
\end{remark}

\section{Approximation of matrix eigenvalues}\label{smxeig11}
  
 Given a $d\times d$ matrix $M$ that has characteristic polynomial $p(x)=\det (xI-M)$,  our root-finders can approximate all  its $m$ zeros (that is, the eigenvalues of $M$) lying in a fixed disc $D$ on the complex plane isolated from its external zeros (or  eigenvalues of $M$).
 
 Recall  Gershgorin's  bound for any  eigenvalue $\lambda$ of a matrix $M=(a_{i,j})_{i,j=1}^d$ 
 \cite[Thm. 1.3.2]{S98}:
$$|\lambda-a_{i,i}|\le \sum_{j\neq i}|a_{i,j}|~{\rm for~some}~i,~1\le i\le d.$$ 

Hence, clearly, $|\lambda|\le ||M||_F$.

 Combine this bound with the a priori precision estimates of the previous section for our 
root-finders\footnote{For a posteriori estimates for the bit operation complexity of Problems 0, $0^*$, 2, and 2$^*$, one can combine our root-finders with the recipe of doubling the precision
of computing  of Sec. \ref{sdblpst11}.} and 
 with the following result of Storjohann \cite{S05}:
\begin{theorem}\label{thstrh11} One can evaluate the determinant of a $d\times d$ integer matrix $A$ by using an expected number of $O(d^{\omega}\log^2(d)\log(|A||_F))$ bit operations. 
\end{theorem}
  For any $d\times  d$  matrix $M$ we arrive at  a record  bit operation 
 complexity bound
$\tilde O(m^2 \tilde bbd^{\omega})$ for approximation within 
$\epsilon$ of all  $m$ eigenvalues of any matrix $M$ lying in the disc $D$ provided  that $\tilde b=\log_2(||M||_F/\epsilon)$.   This  bound 
exceeds (\ref{eqblneig}) by a factor of $b$ and can be decreased by a factor of $m$ under the Random Root Model for the roots of the  characteristic polynomial of $M$.
 
%Acceleration of our root-finding  in Part %III is translated into %the decrease of the arithmetic cost of our root-finders and hence also into the decrease of the above estimates by  a factor of  $b$.
%--------------------------------------------------------------

\section{Bit operation complexity of polynomial root-finding}\label{sblnrtfdg}
 
 \subsection{Straightforward extension of the precision bound and a direction to  saving evaluation queries}\label{sstrfwdr}

%------------------------------------------------------------------------------
 
Recall the bound
\begin{equation}\label{eqmu}
 \mu(s):=O(s\log(s)=\tilde O(s)
\end{equation}
on the number of bit operations  for integer multiplication modulo $2^s$
  \cite{HH21}   
(cf. \cite{SS71,K81/97,S82,S82a}), combine it with 
the  estimate $O(bm)$ of 
Cor. \ref{coeitstm} for the number of evaluation  queries in our root-finders,  the upper bound $2d$ ops for the evaluation of a polynomial $p(x)$ given 
 with its coefficients, and the precision bound $O(b\log(d))$  of Sec. \ref{sprcs11},    and obtain the bit-operation complexity bound $\tilde O(b^2dm)$
for our 
root-finders.\footnote{By extending precision bound of \cite{LV16} we greatly simplified derivation of our bit-complexity estimates versus
the papers \cite{S82,K98}, each of 46 pages, which bypass precision bound and had to use long tedious step-by-step analysis of their
much involved root-finders.}

Since $p(x)$ is given 
 with its coefficients, we apply fast multipoint evaluation
 of a polynomial in \cite{F72,MB72} and use fewer ops,  
    roughly by a factor of $d/\log^2(d)$. Peter
Kirrinnis in  \cite{K98}  and
 Moroz in \cite{M21} have non-trivially extended this progress   to saving bit-operations:
 
\begin{theorem}\label{thkrrn11} 
 Given a positive $b$, the coefficients of  a polynomial $t(x):=\sum_{i=0}^d t_ix^i$ such that   $||t(x)||_1=\sum_{i=0}^d|t_i|\le 2^{\tau}$, and $q$ complex points $x_{1},\dots,x_q$  in the unit disc $D(0,1)$, one can approximate the values $p(x_{1}),\dots,p(x_q)$
within  $1/2^b$ by using $\tilde O((d+q)(b+\tau))$ 
 bit operations. 
\end{theorem}  

\begin{proof}  
For $q=d$ this is 
\cite[Thm. 2]{M21}, based on using novel data structure for hyperbolic approximation of $p$, proposed and analyzed by Moroz  in \cite{M21}. 
Trivially extend this theorem to the cases where $q>d$ (by  considering $t(x)$ a 
polynomial of degree $q$ with $q-d$ leading coefficients 0)
and  where $q<d$ (by removing $d-q$ points $x_i$, with no increase of the complexity bound).
\end{proof}  
 
 We only apply Thm. \ref{thkrrn11}  for $\tau=0$, in which case its 
 bit operation cost bound turns into  
\begin{equation}
\label{eqmrz11}
\mathbb B_{\rm Moroz}=\tilde O((d+q)b).
\end{equation}
 
 For comparison, the  State of the Art before \cite{M21} was the bound
 \begin{equation}
 \label{eqkrrnn11}
\mathbb B_{\rm Kirr}=\tilde O((d+q)(b+q)) 
\end{equation}
of \cite[Thm. 3.9, Alg. 5.3, and Appendix A.3]{K98} by Kirrinnis, who  combined his 
and Sch{\"o}nhage's study  of the bit operation complexity
 of basic polynomial computations (cf. \cite{S82}) with  the algorithm  of \cite{F72,MB72}.\footnote{Precision of computing
 required in the latter 
 algorithm grows fast as $d$ increases; e.g.,   the IEEE standard double precision of the customary numerical computations is not enough to support that algorithm already for $d>50$; for $b=O(\log(d))$ one can fix this deficiency by reducing the task to multiplication of Vandermonde matrix by a vector applying  FMM  (cf. \cite{P15,P17}). One can, however, support  bounds of Eqns. (\ref{eqmrz11}) and (\ref{eqkrrnn11}) on the overall bit operation complexity  by computing with multiple precision for larger values $b$.}  Moroz obtained 
 \cite[Thm. 2]{M21} by applying bound (\ref{eqkrrnn11}) to his lower degree polynomials approximating $p(x)$.
 
% The algorithms of \cite{F72,MB72,K98,M21} extensively involve  the coefficients of $p(x)$. 
 
 Bound (\ref{eqmrz11}) combined with the  precision bound $O(\log(d))$ of  Sec. \ref{sprcs11}
implies the bit operation  
 cost bound $\tilde O((d+q)b)$, dominated by 
an estimated bit operation cost  $\mathbb B$ for the evaluation of  $p(x)$  with relative error $1/d^{O(1)}$ at $\tilde O(db)$ points.
 It remains to estimate  $\mathbb B$, which we do next.

\subsection{Reduction of our root-finders  to multipoint polynomial approximation}\label{srdctnmltpnt}
  
 Given a complex $c$, a pair of positive $b$ and $\rho$,  and
 $d+1$ coefficients of a polynomial $p=p(x)$ of (\ref{eqpoly}) such that the disc $D(c,\rho)$ is isolated and contains precisely $m$ roots, we approximate all these  roots within $R/2^b$
 for $R=|c|+\rho$
by applying subdivision root-finding iterations 
 with  e/i tests of Sec.  \ref{sei}. Then  every subdivision step   
  is reduced  to application of our  e/i tests
 to $\bar m=O(m)$ discs $D(c_{\lambda},\rho_{\lambda})$, where  $\rho_{\lambda}\ge R/2^b$ for all $\lambda$. Such a test amounts  to approximation of  $p(x)$  at $q$ equally spaced points $x$ on each   circle
 $C(c_{\lambda},\rho_{\lambda})$.
 
 In Thm. \ref{thkrrn11} 
a polynomial $t(x)$ 
with $||t(x)||_1\le 1$
is approximated at some points $x\in D(0,1)$. 
To ensure the assumption about $x$, 
 we scale the variable $x\mapsto
Rx$ to map the discs $D(c_{\lambda},\rho_{\lambda}\sigma)\mapsto D(\bar c_{\lambda},\bar\rho_{\lambda})\subseteq D(0,1)$, for
 $\bar c_{\lambda}= c_{\lambda}/R$,
 $\bar\rho_{\lambda}=\rho_{\lambda}/R\ge 1/2^b$,
 and all $\lambda$.
 
 Then we write $t(x):= p(Rx)/\psi$ for $\psi=O(R^d)$
 such that
$||t(x)||_1=1$ and     
approximate $t(x)$ within $1/2^{\bar b}$ or equivalently within the relative error bound  $1/(|t(x)|2^{\bar b})$ at $\bar m$ points $x$ in  $\bar m$ e/i 
tests.\footnote{Under the Random Root Model of Sec. \ref{scmplests} we can whp decrease this bound by a factor of $\bar m/\log(\bar m)$  and then extend this decrease  to the related subsequent estimates.} 
Due to Cor. \ref{coprcall11} it is sufficient to ensure the bound 
$1/(|t(x)|2^{\bar b})=1/d^{O(1)}$; we  deduce it by choosing $\bar b\ge \log(\frac{1}{|t(x)|})+O(\log(d))$. 

In   our e/i tests we only need to approximate  $t(x)$ at the points $x$ lying on $\theta_{\lambda}$-isolated circles 
 where
 $\frac{1}{\theta_{\lambda}-1}=O(m)$  (cf. Assumption 1 of Sec. \ref{sapri11}). Then  
$\log(\frac{1}{|t(x)|})=O(d\log(m))$
by virtue of  \cite[Eqn.   
(9.4)]{S82}).\footnote{That bound follows from
the estimate $|f|\ge 1/2^d$
of \cite[Thm. 4.2]{S82}, which holds provided that
$t(x)=f\prod_{j=1}^m(x-x_j)\prod_{j=m+1}^d(\frac{x}{x_{j}}-1)$, $||t(x)||_1=1$,
$|x_j|<1$ for $j\le m$, and
$|x_j|>1$ for $j> m$.} 

Substitute $\bar \rho_{\lambda}\ge 1/2^b$ and obtain 
%\begin{equation}\label{eqbrbdm}
$\bar b=\log(\frac{1}{|t(x)|})+O(\log(d))=O(d\log(m)+b)$.
%\end{equation}
\begin{remark}\label{relbt11}
For $m=d$, the above bound on $\log(\frac{1}{|t(x)|})$ is sharp up to a constant factor and is reached for the polynomial $t(x)=(x+1-\frac{1}{m})^d/(2-\frac{1}{m})^d$, $\rho_{\lambda}=1$, and $x=-1$. For $m<d$, \cite[Thm.   
4.5]{S82} implies a little stronger   bounds, namely,   $\log(\frac{1}{|t(x)|})=O(d+ m\log(m))$ and hence
$\bar b=O(m\log(m)+d+b)$
 because all discs $D(\bar c_{\lambda},\bar\rho_{\lambda})$ lie in  $\theta$-isolated unit disc $D(0,1)$ for $\theta-1$
exceeding a positive constant. 
\end{remark}

\subsection{Bit operation cost of our e/i tests and root-finders}\label{sblnrts1} 
 
 Apply  Thm. \ref{thkrrn11} 
with $q$ and $b$ replaced  
by $\bar q=O(m^2)$ and $\bar b=O(b+d+m\log(m))$, respectively, and obtain that the bit operation cost of our $O(m)$ e/i tests  of Sec. \ref{sei} at any fixed subdivision step is in 
 $\tilde O((b+d+m)m^2)$
  for  
$1\le m\le d$. This implies the upper estimate
\begin{equation}\label{eqbrts11}    
 \mathbb B_{\rm roots}= \tilde O((b+d)(m^2+d)b)
\end{equation}
for the overall bit operation cost of our $O(b)$ subdivision 
steps.
%\footnote{Under the Random Root Model of Sec. \ref{scmplests} we can  decrease this bound by a factor of $m$ whp and can extend this decrease  to the related subsequent estimates.} 
Based on \cite[Alg. 5.3]{K98} 
and Eqn. (\ref{eqkrrnn11}) we obtain a little larger but still reasonably small upper bound
\begin{equation}\label{eqbrts'11} 
\mathbb B_{\rm roots}'= \tilde O(((m^2 + d)(m^2 + d + b)b).
\end{equation}

\subsection{Higher bit operation  complexity of   general polynomial root-finding based on Pellet's theorem, map (\ref{eqshft}), and root-lifting}\label{sblnscl} 

Recall  the followings theorem 
of Pellet 
 \cite{P881}, which extends the previous results by Cauchy and Rouch{\'e}: 
 
\begin{theorem}
Given a monic polynomial $t(x)=\sum_{i=0}^dt_ix^i$ with complex coefficients and $t_d=1$, let the
polynomial
$s(x)=x^d+p_{d-1}
x^{d-1}+\cdots +|p_{i+1}|x^{i+1}-|p_i|x^i|+p_{i-1}|x^{i-1}+\cdots+|p_0|$
have two distinct positive roots $r$ and $R$, $r<R$. Then the
polynomial $t(x)$ has exactly 
$i$ zeros in or on the
circle $C(0,r)=\{z:~|z|=r\}$ and no zeros in the annulus 
$A(0,r,R)=\{z:~r<|z|<R\}$.
\end{theorem}

One must know the coefficients 
of a polynomial $t(x)$ to apply this theorem and  then
 can  
perform  very fast 
an e/i test for a disc
centered at the origin.
  
Such
soft e/i tests were the basis of the subdivision root-finders of \cite{BSSXY16,BSSY18}, which apply them 
to polynomials
$t(x)$ obtained from
 $p(x)$ of 
(\ref{eqpoly}) by means of shifts of the variable $x$, scaling it  
by  factors ranging from $R/2^b$ to
$R=\max_{j-1}^d|z_j|$, and order of $\log(\log(d))$ root-squaring steps.

To apply Pellet's  theorem to such a polynomial $t(x)$
 one must compute  the values $|\frac{t_i}{t_d}|$ for $i=0,1,\dots,d-1$.   Because of scaling alone, 
 the coefficient $t_i$ must be computed within $1/2^{b-i}$ to approximate the zeros of $p$ within $1/2^b$
 for every $i$, $i=0,1,\dots,d$.
 
 As in the proof of Observation \ref{obbd},
 it follows that one must process at least  
  $0.5(d+1)db$ bits of the coefficients of $t(x)$ and must perform at least  
  $0.25(d+1)db$ bit operations
 to support even a single 
 e/i tests towards
 approximation of the   zeros of $p(x)$ within $1/2^b$. 
  
 Already a single subdivision iteration
for Problem 1${_m}$ can involve up to 
 $4m$ soft e/i tests
and hence up to
at least $0.25(d+1)dbm$
bit-operations.
This is {\em  by a factor of $d$ exceeds}  the  bound of  \cite{P95,P02}  for approximation ofall  $m$ roots in an isolated disc.

Such a  bit-count and  the bit operation cost bounds for the root-finders of \cite{P95,P02,BSSY18}  decrease   by a factor of $d$ where the minimal pairwise distance between the zeros of $p$ exceeds a positive constant; {\em the ratio of the bounds of \cite{BSSY18} and  \cite{P95,P02} remains at least} $d$.

  Similar argument shows that already  for a singe e/i test {\em representation of the leading coefficients}   $t_d$ of $t(x)$ alone under scaling involves order of
  $bd$ bits. Moreover, this is amplified to $d^2b$ for $t_d^d$ in every  e/i test  and up to $d^2bm$ per a subdivision iteration with $m$ e/i tests.

It may be interesting that a shift by $c$ such that $|c|\le 2$  is much less costly than 
scaling:\footnote{We can relax the  restriction $|c|\le 2$ above  by means of scaling the variable 
 $x$, but as we showed above,  scaling can greatly increase  the   bound on the  overall number of bit operations involved.}

\begin{theorem}\label{thkrrndft}  {\rm Bit operation cost of the shift of the variable, \cite[Lemma 2.3]{S85}
(cf. \cite[Lemma 3.6]{K98}).}  Given a positive $b$, the coefficients of  a polynomial $p(x):=\sum_{i=0}^d p_ix^i$ such that   $||p(x)||_1\le 1$, and 
 a complex $c$ such that $|c|\le 2$, one can approximate    within $1/2^b$ the coefficients $t_0,t_1,\dots,t_d$
of the polynomial $t(x):=\sum_{i=0}^d t_ix^i:=p(x-c)$ by performing   $O(\mu((b+d)d))$ bit operations for $\mu(s)$ of (\ref{eqmu}).
 \end{theorem}

%-----------------------------------------------------------------------------
  
%---------------------------------------------------------------

%{\Large \bf APPENDIX}
%+\medskip
 
%\appendix
   
%------------------------------------------------------------------------------

{\Large \bf PART III: Classical subdivision root-finder with Cauchy-based novel e/i tests}
 
%------------------------------------------------------------------------------- 
\medskip

Next we elaborate upon  Alg. \ref{algeic} and analyze it.
\medskip

{\bf Organization of the presentation  in Part III:}
 We devote the next section to some basic definitions.  
 In Sec. \ref{scntavr} we count roots in a disc  on the complex plane by means of approximation of a Cauchy integral over its boundary circle.
In Sec. \ref{mdls} we 
 analyze  the resulting
 root-counters and e/i tests in a  disc.   
   In Secs.  \ref{sprcs}
  we estimate the computational precision supporting  our root-finders. 
   In Sec.   \ref{sblnrtfnd} we estimate their bit operation complexity   assuming that a polynomial $p(x)$ is represented with its coefficients.

\section{Background: basic definitions}\label{sbdef}
\begin{itemize}
 \item%1 
``Wlog" stands for  ``without loss of  generality".
 \item%1 
%respectively: 
%\begin{equation}\label{eqdsc}  
$S(c,\rho):=\{x:~|\Re(c-x)|\le 
 \rho$, 
 $|\Im(c-x)|\le \rho\},~ 
D(c,\rho):=\{x:~|x-c|\le \rho\}$,

%\end{equation}
%\begin{equation}\label{eqcrcl}
$C(c,\rho):=\{x:~|x-c|= \rho\}$, and 
$A(c,\rho,\rho'):=\{x:~\rho\le |x-c|\le \rho'\}$ \\
denote
a square, a disc, a circle (circumference),  and an annulus (ring) on the complex plane, respectively.

 \item%5 
 The largest  upper bound
 on such a value $\theta$ is  
  said to be the
 {\em  isolation} of 
the disc $D(c, \rho)$, the circle $C(c, \rho)$, or the
 square $S(c, \rho)$, respectively (see Fig. \ref{fig6}), and is denoted
 $i(D(c,\rho))$, $i(C(c,\rho))$, and
 $i(S(c,\rho))$,  respectively.
%Two regions  are {\em equivalent} if they share their root sets.

%------------------------------------------------------------------------------
%------------------------------------------------------------------------------

\item%5
   $r_1(c,t)=|y_1-c|,\dots,r_d(c,t)=|y_d-c|$ in 
   non-increasing order are the $d$ {\em root radii}, that is, the distances from a  complex point $c$ to the zeros $y_1\dots,y_d$ of a $d$th degree polynomial $t(x)$.
 $r_j(c):=r_j(c,p)$, $r_j:=r_j(0)$ for $j=1,\dots,d$.
   % write
%\item%6  $r_j:=r_j(0,p)=|z_j|$ for all $j$.
%\item
%A subset having a fixed property is {\em minimal} (resp. {\em maximal}) in a fixed set if it has minimal diameter among all subsets having this property.
\end{itemize}
%------------------------------------------------------------------------------

 \begin{observation}\label{obscchy}
Eqn. (\ref{eqshft}) maps the zeros $z_j$ of $p(x)$
into the zeros $y_j=\frac{z_j-c}{\rho}$
of $t(y)$, for $j=1,\dots,d$, and  preserves
%VP both
 the index $\#(D(c,\rho))$ and the isolation  $i(D(c,\rho))$.
%  and rigidity  
%  of the disc $D(c,\rho)$.
\end{observation}

%------------------------------------------------------------------------------
%------------------------------------------------------------------------------

\section{Cauchy root-counting in a disc}\label{scntavr}

\subsection{The power sums of the roots in a disc}\label{scntuntd}

 Generalize (\ref{eqchint}) as follows  (cf.  \cite{A00}): 
 \begin{equation}\label{eqnps}
 s_h=s_h(\mathcal D):=\sum_{j:~z_j\in \mathcal D}z_j^h=\frac{1}{2\pi \sqrt{-1}}\int_{\mathcal C}x^h~\frac{p'(x)}{p(x)} ~dx,~{\rm for}~h=0,1,\dots.
\end{equation} 
For  $\mathcal D=D(0,1)$
and $\zeta$ of (\ref{eqdndgnr})  approximate $s_h$
with finite sums (cf. \cite{S82,TW14})
 \begin{equation}\label{equ7.12.2}
s_{h,q}: =\frac{1}{q}\sum_{g=0}^{q-1}\zeta^{(h+1)g}~\frac{p'(\zeta^g)}{p(\zeta^g)}~{\rm for}~h=0,1,\dots,q-1
\end{equation}    
and recall that $s_0(D(0,1))=\#(D(0,1))$
(cf.
(\ref{eqchint})), while  an e/i test is just the decision whether 
$\#(D(0,1))>0$.

%-----------------------------------------------------

 \subsection{Cauchy sums in  the unit disc as  weighted  power sums of the roots}\label{scchrts}  
  
%------------------------------------------------------------------------------

Cor. \ref{copwrsm0} of the following lemma expresses the $h$th Cauchy sum  $s_{h,q}$
as the sum of the $h$th powers of the roots $z_j$ with the weights 
$\frac{1}{1-x^q}$.

\begin{lemma}\label{lepwrsm}
For a complex $z$, two integers  $h\ge 0$ and $q>1$,  and $\zeta$ 
of (\ref{eqdndgnr}),
it holds that
\begin{equation}\label{eqratiosm}
\frac{1}{q}\sum_{g=0}^{q-1}\frac{\zeta^{(h+1)g}}{\zeta^g-z}=\frac{z^h}{1-z^q}.
\end{equation}
\end{lemma}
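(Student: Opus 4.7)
The plan is to recognize the claimed identity as (the negative of) the partial fraction decomposition of the rational function $\frac{x^h}{x^q-1}$ evaluated at $x=z$. For the range of $h$ that actually appears in Algorithm \ref{algpsdsc}, namely $0\le h\le q-1$, this function is a proper rational function whose denominator $x^q-1=\prod_{g=0}^{q-1}(x-\zeta^g)$ has $q$ distinct simple roots, so there is a unique expansion of the form $\sum_{g=0}^{q-1}\frac{A_g}{x-\zeta^g}$.

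First I would compute the residues $A_g$ using the standard formula $\mathrm{Res}_{x=\zeta^g}\frac{u(x)}{v(x)}=\frac{u(\zeta^g)}{v'(\zeta^g)}$ for simple poles, obtaining
\[
A_g=\frac{\zeta^{gh}}{q\,\zeta^{g(q-1)}}.
\]
Simplifying via $\zeta^{gq}=1$, i.e.\ $\zeta^{g(q-1)}=\zeta^{-g}$, collapses this to $A_g=\zeta^{g(h+1)}/q$. Plugging $x=z$ into the resulting partial fraction identity and moving the minus sign into the denominator of each term (replacing $z-\zeta^g$ by $-(\zeta^g-z)$ and $z^q-1$ by $-(1-z^q)$) yields exactly equation (\ref{eqratiosm}). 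This is essentially a two-line calculation once the residues are in hand.

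The only delicate point is the stated range ``$h\ge 0$''. For $h\ge q$ the left-hand side of (\ref{eqratiosm}) depends only on $(h+1)\bmod q$, while the right-hand side does not, so the identity as written is really meant for $0\le h\le q-1$, which is the range used in (\ref{equ7.12.2}) and which is exactly what the partial fraction argument covers. If one wants a proof that is manifestly uniform in $h$ (for $|z|<1$, extended by analytic continuation), an alternative is to expand $\frac{1}{\zeta^g-z}=\sum_{k\ge 0}z^k\zeta^{-(k+1)g}$ as a geometric series, multiply by $\zeta^{(h+1)g}$, and interchange sums; the orthogonality relation $\frac{1}{q}\sum_{g=0}^{q-1}\zeta^{(h-k)g}=\mathbb{1}[q\mid h-k]$ from (\ref{eqprmrt}) then picks out the indices $k=h,h+q,h+2q,\ldots$, whose $z$-series sums to $\frac{z^h}{1-z^q}$ precisely when $0\le h<q$. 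I do not expect any real obstacle: both approaches are routine, and the identity is of a well-known type (it is essentially the Lagrange interpolation formula for powers at roots of unity).
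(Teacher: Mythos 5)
Your proof is correct, and it takes a genuinely different route from the paper's. The paper proves Lemma \ref{lepwrsm} analytically: it expands $\frac{\zeta^{(h+1)g}}{\zeta^g-z}$ by Newman's (geometric series) expansion, sums over $g$ using the orthogonality relations (\ref{eqprmrt}), and does this in three cases, $|z|<1$ directly, $|z|=1$ by continuity, and $|z|>1$ via a second, separate expansion in powers of $1/z$. Your residue/partial-fraction argument for $\frac{x^h}{x^q-1}=\sum_{g=0}^{q-1}\frac{\zeta^{(h+1)g}/q}{x-\zeta^g}$ establishes the same identity as an equality of rational functions, valid for every $z$ with $z^q\neq 1$ in one stroke, with no convergence considerations and no case split; your sketched alternative (geometric series plus orthogonality for $|z|<1$) is essentially the paper's own proof of its first case. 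Your observation about the range of $h$ is also well taken and applies equally to the paper: the identity as stated for all $h\ge 0$ fails when $h\ge q$ (the left side depends only on $(h+1)\bmod q$), and the paper's proof tacitly uses $0\le h\le q-1$ when it indexes the surviving terms by $s=h+ql$ with $l\ge 0$ (respectively $s=ql-h-1$ with $l\ge 1$); since (\ref{equ7.12.2}) only uses $h=0,1,\dots,q-1$, this restriction is harmless, but it deserves to be stated, as you do.
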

 
\begin{proof}  
First let $|z|<1$ and obtain
$$\frac{\zeta^{(h+1)g}}{\zeta^g-z}=
\frac{\zeta^{hg}}{1-\frac{z}{\zeta^g}}=
\zeta^{hg}\sum_{u=0}^{\infty}\Big(\frac{z}{\zeta^g}\Big)^u=
\sum_{u=0}^{\infty}\frac{z^u}{\zeta^{(u-h)g}}.$$ 
The equation in the middle follows from Newman's expansion 
$$\frac{1}{1-y}=
\sum_{u=0}^{\infty}y^u~{\rm for}~ 
y=\frac{z}{\zeta^g}.$$
We can apply it because $|y|=|z|$, while
 $|z|<1$ by assumption. 

Sum the above expressions in $g$, recall that
\begin{equation}\label{eqprmrt} 
\zeta^u\neq 1~{\rm and}~\sum_{s=0}^{q-1}\zeta^u=0~{\rm for}~0<u<q, 
~\zeta^q=1,
\end{equation} 
 and deduce 
that $$\frac{1}{q}\sum_{g=0}^{q-1}\frac{z^u}{\zeta^{(u-h)g}}=z^u$$ for $u=h+ql$ and an  integer $l$, while  
$\sum_{g=0}^{q-1}\frac{z^u}{\zeta^{(u-h)g}}=0$ for all other values $u$.  
Therefore
$$\frac{1}{q}\sum_{g=0}^{q-1}\frac{\zeta^{(h+1)g}}{\zeta^g-z}=z^h\sum_{l=0}^{\infty}z^{ql}.$$
 Apply Newman's expansion for $y=z^q$ and deduce (\ref{eqratiosm})
for $|z|<1$. 
By continuity extend  (\ref{eqratiosm})
to the case of $|z|=1.$

Now let $|z|>1$. Then  
$$\frac{\zeta^{(h+1)g}}{\zeta^g-z}=-\frac{\zeta^{(h+1)g}}{z}~~\frac{1}{1-\frac{\zeta^g}{z}}=-\frac{\zeta^{(h+1)g}}{z}\sum_{u=0}^{\infty}\Big(\frac{\zeta^g}{z}\Big)^u=
-\sum_{u=0}^{\infty}\frac{\zeta^{(u+h+1)g}}{z^{u+1}}.$$ 
Sum these expressions in $g$,
write $u:=ql-h-1$, apply (\ref{eqprmrt}),
and obtain 
$$\frac{1}{q}\sum_{g=0}^{q-1}\frac{\zeta^{(h+1)g}}{\zeta^g-z}= 
-\sum_{l=1}^{\infty}\frac{1}{z^{ql-h}}
=-z^{h-q}\sum_{l=0}^{\infty}\frac{1}{z^{ql}}.$$
Apply Newman's expansion for $y=z^q$ again
and obtain 
%V that
$$\frac{1}{q}\sum_{g=0}^{q-1}\frac{\zeta^{(h+1)g}}{\zeta^g-z}=
-\frac{z^{h-q}}{1-\frac{1}{z^{q
}}}=\frac{z^h}{1-z^q}.$$
Hence (\ref{eqratiosm}) holds
in the case where $|z|>1$ as well.
\end{proof}

Sum equations (\ref{eqratiosm}) for 
$z=z_j$ and 
%V for 
$j=1,\dots,d$, combine Eqns. \ref{eqratio} and
(\ref{equ7.12.2}), and 
 obtain
\begin{corollary}\label{copwrsm0}
For the roots $z_j$   and all $h$, the Cauchy sums $s_{h,q}$ of (\ref{equ7.12.2}) satisfy
%\begin{equation}\label{eqpwrsmrt}
$s_{h,q}=\sum_{j=1}^d\frac{z_j^h}{1-z_j^q}$
unless $z_j^q=1~
{\rm for~some}~j.$
%\end{equation}
\end{corollary}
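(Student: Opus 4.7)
The plan is to obtain the corollary by a direct substitution and interchange of summation, with Lemma \ref{lepwrsm} doing all the real work term by term.

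First I would start from the definition (\ref{equ7.12.2}) of $s_h^*$ and substitute the partial-fraction expansion (\ref{eqratio}) of $p'/p$ evaluated at $x=\zeta^g$, obtaining
\[
s_h^* \;=\; \frac{1}{q}\sum_{g=0}^{q-1}\zeta^{(h+1)g}\sum_{j=1}^{d}\frac{1}{\zeta^g-x_j}.
\]
This substitution is legitimate precisely under the hypothesis (\ref{eqnt0}), i.e.\ that $p(\zeta^g)\neq 0$ for every $g$; this is exactly the assumption $x_j^q\neq 1$ for all $j$ stated in the corollary (since $p(\zeta^g)=0$ iff some root $x_j$ satisfies $x_j=\zeta^g$, i.e.\ $x_j^q=1$).

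Next I would swap the two finite sums, which is unconditionally valid:
\[
s_h^* \;=\; \sum_{j=1}^{d}\;\frac{1}{q}\sum_{g=0}^{q-1}\frac{\zeta^{(h+1)g}}{\zeta^g-x_j}.
\]
Now for each fixed $j$, Lemma \ref{lepwrsm} applies with $z:=x_j$ (the lemma handles $|z|<1$, $|z|=1$ by continuity, and $|z|>1$, so we only need $z^q\neq 1$, which is our hypothesis). Substituting $\frac{x_j^h}{1-x_j^q}$ for the inner sum yields the claimed identity
\[
s_h^* \;=\; \sum_{j=1}^{d}\frac{x_j^h}{1-x_j^q}.
\]

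There is no real obstacle here: the whole proof is a one-line substitution followed by an application of the lemma to each root separately. The only subtlety worth a sentence is the bookkeeping of the excluded case $x_j^q=1$, which simultaneously (a) makes some $p(\zeta^g)$ vanish so $s_h^*$ of (\ref{equ7.12.2}) is undefined, and (b) makes some denominator $1-x_j^q$ vanish on the right-hand side, so excluding it once covers both sides of the identity.
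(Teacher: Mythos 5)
Your proposal is correct and follows essentially the same route as the paper: the paper's own proof is precisely to sum the identity (\ref{eqratiosm}) of Lemma \ref{lepwrsm} over $z=x_j$, $j=1,\dots,d$, and combine it with (\ref{equ7.12.2}) and the partial-fraction expansion (\ref{eqratio}). Your extra remark on why the excluded case $x_j^q=1$ simultaneously invalidates both sides is a sensible (and harmless) elaboration of what the paper leaves implicit.
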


%-----------------------------------------------------
%-----------------------------------------------------

 \subsection{Approximation of the power sums in the unit disc}\label{spwrsmerr}  
  
%------------------------------------------------------------------------------

 Cor. \ref{copwrsm0} implies that                                                                                                                                                                                                                                                                                                                                                                                                                                                                                                                                                                                                                                                                                                                                                                                                                                                                                                       
 the  values $|s_{h,q-}s_{h}|$  decrease exponentially in $q-h$ if the unit disc $D(0,1)$ is isolated.  

\begin{theorem}\label{thpwrsm0} 
\cite[Eqn. (12.10)]{S82}.\footnote{Unlike the proof in \cite{S82}, we rely on Cor. \ref{copwrsm0} of independent interest.  Thm. \ref{thpwrsm0} does not follow from
  \cite[Thms. 2.1 and 2.2]{TW14} but can possibly be proved with some techniques of \cite{TW14}.}
 Let the unit circle $C(0,1)$ be   $\theta$-isolated and let $d_{\rm in}$ and $d_{\rm out}=d-d_{\rm in}$ denote the numbers
 of the roots lying in and outside the disc
 $D(0,1)$, respectively.    
 Then 
\begin{equation}\label{equ7.12.6} 
|s_{h,q}-s_h|\le
\frac{d_{\rm in}\theta^{-h}+d_{\rm out}\theta^{h}}{\theta^q-1}\le
\frac{d\theta^{h}}{\theta^{q}-1}~{\rm for}~h=0,1,\dots,q-1.
\end{equation}  
\end{theorem}

\begin{proof}
Enumerate the roots $z_j$ so that 
$|z_j|<1$ if and only if $j\le d_{\rm in}$;  
then  Cor.
 \ref{copwrsm0} implies that
\begin{equation}\label{equshqsh} s_{h,q}-s_h=s_{h,q,{\rm in}}+
s_{h,q,{\rm out}}
\end{equation}
where
\begin{equation}\label{equshqinout}
s_{h,q,{\rm in}}=\sum_{j=1}^{d_{\rm in}}z_j^h\Big(\frac{1}{1-z_j^q}-1\Big)~
{\rm and}~s_{h,q,{\rm out}}=\sum_{j=d_{\rm in}+1}^{d}\frac{z_j^{h}}{1-z_j^q}=\sum_{j=d-d_{\rm out}+1}^{d}\frac{z_j^{h}}{1-z_j^q}.
\end{equation}

Notice that $\frac{1}{1-z_j^q}-1=
\frac{z^q}{1-z_j^q}$, and so $|z_j^h(\frac{1}{1-z_j^q}-1)|\le
\frac{\theta^{-h}}{\theta^q-1}$ for $z_j\in D(0,1)$, 
while
$\Big|\frac{z_j^{h}}{1-z_j^q}\Big|\le \frac{\theta^{h}}{\theta^q-1}$ for $z_j\notin D(0,1)$.
Combine these bounds with Eqns. (\ref{equshqsh}) and (\ref{equshqinout}) and obtain 
(\ref{equ7.12.6}).
\end{proof}

 By choosing $q$ that exceeds $\log_{\theta}(2d+1)$  we ensure 
that $|s_0-s_{0,q}|<1/2$.
 
%------------------------------------------------------------------------------

\begin{example}\label{exrtcnt}
Let  the unit circle $C(0,1)$ be 2-isolated.
Then Thm. \ref{thpwrsm0}  implies  that $|s_{0,q}-s_0|<1/2$ for  $d\le 1,000$ if
$q=10$ and for  $d\le 1,000,000$ if $q= 20$.
 \end{example}  
 
%    Let us apply Cauchy sums results to root-counting  in an isolated unit disc $D(0,1)$ based on bound (\ref{equ7.12.6}).
    
%------------------------------------------------------------------------------
  
\begin{algorithm}\label{algexcl}
%{root-counting in the unit disc.}
 \begin{description}  
INPUT: a black box polynomial $p$ of a degree $d$ and $\theta>1$. 
%such that the circle $C(0,1)$ is $\theta$-isolated.
  
  INITIALIZATION: Compute the integer $q= \lfloor\log_{\theta}(4d+2)\rfloor>\log_{\theta}(2d+1)$.

  COMPUTATIONS: Compute Cauchy sum $s_{0,q}$  and output an    
  integer $\bar s_0$ closest to it.
  \end{description}
\end{algorithm}

% Thm. \ref{thpwrsm0}  implies the %following estimates. 

% \begin{observation}\label{obsexcl}
%Alg. \ref{algexcl} runs at  ofthe cost $q$   queries of the evaluation of NIR$(x)$ at all   $q$-th roots of unity.
%\end{observation}

%------------------------------------------------------------------------------

\begin{observation}\label{thtsts}
(i) Alg. \ref{algexcl} 
evaluates $p(x)$ at
 $q=\lfloor\log_{\theta}(4d+2)\rfloor$ points. (ii) It outputs $\bar s_0=\#(D(0,1))$ if the circle $C(0,1)$ is $\theta$-isolated  for $\theta>1$.
(iii) If the algorithm
outputs  $\bar s_0>0$, then $\#(D(0,\theta))>0$.
\end{observation}
\begin{proof}
 Thm. \ref{thpwrsm0} immediately implies claim (ii) but  also implies that $\#(D(0,1))>0$ if $\bar s_0>0$ unless  
$\#(A(0,1/\theta,\theta))>0$. In both cases 
$\#(D(0,\theta))>0$.
\end{proof}    
 
%------------------------------------------------------------------------------ 

%\subsection
\subsection{Extension to  any disc}
\label{scchanyd}  
%------------------------------------------------------------------------------

Map (\ref{eqshft}), preserving  $\#(D(0,1))$ (cf. Observation \ref{obscchy}),
 enables us to extend  definition (\ref{equ7.12.2}) of  
Cauchy sum to any disc 
$D(c,\rho)$ as follows:
\begin{equation}\label{equ7.12.00}
s_{0,q}(p,c,\rho):=s_{0,q}(t,0,1): =\frac{\rho}{q}\sum_{g=0}^{q-1}\zeta^g~\frac{p'(c+\rho\zeta^g)}{p(c+\rho\zeta^g)},
\end{equation}
for $\zeta=\zeta_q$ of (\ref{eqdndgnr}),
that is, $s_{h,q}(p,c,\rho)$ is the Cauchy sums $s_{h,q}(t,0,1)$ of the zeros of the polynomial $t(y)=p(\frac{y-c}{\rho})$  in the unit disc $D(0,1)$.
 
\begin{observation}\label{obcstch}  
Given a polynomial $p(x)$, a complex $c$, a positive $\rho$,
 a positive integer $q$, and the $q$-th roots of unity,  we can compute $s_{0,q}(p,c,\rho)$ by means of the evaluation of NIR$(\zeta^g)$ for $g=0,1,\dots,q-1$,  reduced to
evaluation of  $p$ at $2q$ 
points, and in addition
performing $q+1$ divisions, $2q-1$ multiplications, and $2q-1$ additions.
\end{observation}  
Refer to Alg. \ref{algexcl} 
applied to the polynomial $t(y)$ of (\ref{eqshft}) for an isolation parameter $\theta>1$ 
and $q=\lfloor\log_{\theta}(4d+2)\rfloor$   
 as {\em  Alg. \ref{algexcl}$_{c,\rho,q}$} and also as {\em  Alg. \ref{algexcl} applied to  $\theta$-isolated circle} $C(c,\rho)$ for $2q$  evaluation points. Combine Observations \ref{obscchy} and \ref{obcstch} to extend Observation \ref{thtsts} as follows.

%------------------------------------------------------------------------------

\begin{proposition}\label{thtsts1}
(i) Alg. \ref{algexcl}$_{c,\rho,q}$ evaluates polynomial $p(x)$ at $q=\lfloor\log_{\theta}(4d+2)\rfloor$ points $x$ and in addition performs $5q-2$ ops. (ii) It outputs $\bar s_0=\#(D(c,\rho))$ if the circle $C(c,\rho)$ is $\theta$-isolated.
(iii) If the algorithm
outputs  $\bar s_0>0$, then $\#(D(c,\theta\rho))>0$.
\end{proposition}  

By applying  Thm. \ref{thpwrsm0} 
we can estimate the errors 
$|s_{h,q}(t,0,1)-s_h(t,0,1)|$ of the approximation of the $h$th power sums 
$s_h(t,0,1)$ of the roots of $t(x)$ in the unit disc $D(0,1)$.
Next we specify this
 for $h=1$.
\begin{proposition}\label{ths1ccjh} (Cf. \cite{IP22}.)
Let for $\theta>1$  a $\theta$-isolated disc 
$D(c,\rho)$ contain exactly $m$ roots.
Fix a positive integer $q$ and
define the Cauchy sum $s_{1,q}(p,c,\rho)$
of (\ref{equ7.12.00}). Then
$$|mc+\rho s_{1,q}(p,c,\rho) - s_1(p,c,\rho)|\le  \frac{\rho d \theta}{\theta^q-1}.$$
\end{proposition}
  
%------------------------------------------------------------------------------
%------------------------------------------------------------------------------

%\subsection{The power sums and Newton's identities}\label{spwrsm2w}
 
%------------------------------------------------------------------------------
  
%------------------------------------------------------------------------------

\section{Root-counting and $\ell$-tests}\label{mdls}
 
%------------------------------------------------------------------------------ 
 
\subsection{Overview}\label{sovrv} 
 
 We seek fast  root-counters and 
 $\ell$-tests based on computing
 Cauchy sum $s_{0,q}$
 for a smaller integer $q$, but the output can deceive us  if we decrease $q$ below $m$
  (see Example \ref{ex1}). 
  
%-------------------------------------------------------------------------------

Extensive tests for $q$
of order $\log^2(d))$  
in   \cite{IP20,IP22} have never encountered  
such unlucky inputs, and  we only  provide some indirect formal support for this empirical behavior in Sec. \ref{seirndalg}.
 
 In the next two subsections we  support Las Vegas randomized root-counters and $\ell$-tests  that evaluate polynomial $p$
  at $O(m\log(d))$ points. 
    
 Combine this estimate with Cor. \ref{cosgmz} for $k$ of order $b$ and obtain  Las Vegas randomized solution of Problems 0, 1, 0$^*$, and 1$^*$ at the cost of evaluation of the  polynomial $p$
  at  $O(m^2b)$  points. 
  
%------------------------------------------------------------------------------

\subsection{A basic randomized root-counter}\label{srndrtcnt} 

%------------------------------------------------------------------------------ 

Next we fix a parameter $\gamma>1$ and  devise a root-counter for a disc  $D(c,\rho)$ with
 a fixed center $c$ 
(wlog we let $c=0$)
and a  radius $\rho$ sampled at random in a fixed small range. Then we prove that under such a  Las Vegas randomization
the test  fails with a probability at most $1/\gamma$.

%1------------------------------------------------------------------------------

 \begin{algorithm}\label{algrndrtcm} 
{\em Basic randomized root-counter.} 
  
\begin{description} 

%------------------------------------------------------------------------------
 
INPUT: $\gamma\ge 1$ and  a $d$-th  degree black box polynomial $p$ having at most $m$ zeros in the  disc $D(0,\sqrt 2)$. 
 
%------------------------------------------------------------------------------%------------------------------------------------------------------------------
 
INITIALIZATION: Sample a random value $w$
in the range $[0.2,0.4]$ under the uniform probability distribution in that range  and write $\rho:= 2^w$ and $q=\lfloor10m\gamma\log_{2}(4d+2)\rfloor$.

%------------------------------------------------------------------------------
     
 COMPUTATIONS: Apply Alg. \ref{algexcl}$_{0,\rho,q}$.   
\end{description} 
\end{algorithm}

\begin{proposition}\label{thrndrtcm}
 Alg. \ref{algrndrtcm} queries  evaluation oracle at  $q=\lfloor 10m\gamma\log_{2}(4d+2)\rfloor$ locations	and outputs the value $\bar s_0=\#(D(0,\rho))$ with a probability at least $1-1/\gamma$.
 \end{proposition}

\begin{proof}
Readily verify the bound on the number of  query locations  claimed in the proposition.  \\
By assumption, the annulus 
$A(0,1,\sqrt 2)$ contains at most $m$
roots. \\ 
Hence at most $m$
root radii $r_j=2^{e_j}$, for $j=1,\dots,m'\le m$, lie in the range $[1,\sqrt 2]$, that is, $0\le e_j\le 0.5$ for at most $m'\le m$ integers $j$.\\
Fix $m'$  intervals, centered at $e_j$, for $j=1,\dots,m'$,  each of a length  at most $1/(5m'\gamma)$. Their  overall length is at most $1/(5\gamma)$. 
 Let  $\mathbb U$  denote their union.  \\
Sample a random $u$ in the range  $[0.2,0.4]$ under the uniform probability distribution in that range and notice that 
Probability$(u\in \mathbb U) \le 1/\gamma$.  \\
Hence  with a probability at least $1-1/\gamma$  the circle $C(0,\rho)$
is $\theta$-isolated for $\theta=2^{\frac{1}{10m\gamma }}$, in which case Alg. \ref{algrndrtcm} outputs 
$\bar s_0=s_0=\#(D(0,\rho))$
 by virtue of claim (ii) of Prop. \ref{thtsts1}.
\end{proof}

\subsection{Refining the bound on the error probability}\label{srfn} 

 When we apply Alg. \ref{algrndrtcm} $v$ times, the number of query locations of 
the evaluation oracle
grows linearly -- by a factor of $v$, while the bound on Las Vegas error probability of outputting errors decreases exponentially -- from $1/\gamma$ to $1/\gamma^{v}$. 

First let $\ell=1$ or $\ell=m$ and then again reduce the root radius approximation problem to the decision problem 
 of $\ell$-test. Namely, 
 {\em narrow  Alg. \ref{algrndrtcm} to an $\ell$-test} for a fixed $\ell$ in the range  $1\le\ell\le m$, so that with a probability at least $1-1/\gamma$ this $\ell$-test  outputs 0 if $\#(D(0,1))<\ell$   and outputs 1 if  $\#(D(0,2^{0.5}))\ge\ell$.  Moreover,  output 1 is certified by virtue of claim (iii) of  Prop. \ref{thtsts1} 
 if $\ell =1$;  similarly the output 0 is certified  if $\ell =m$.

 Next extend  the $\ell$-test by means of applying Alg. \ref{algrndrtcm}  
 for $v$ iid  random variables $w$ in the range $0.2\le w\le 0.4$. 
Call this 
%VP $ell$
 $\ell$-test 
{\bf Alg. \ref{algrndrtcm}v,$\ell$}.

Specify its output  $g$
for $\ell=1$ as follows: let
$g=1$  if  Alg. \ref{algrndrtcm}$v,1$ outputs 1  at least  once in its $v$ applications; otherwise let $g= 0$. Likewise, 
let $g=0$  if  Alg. \ref{algrndrtcm}$v,m$   outputs 0  at least once in its $v$ applications; otherwise let $g=1$.

 Recall Prop. \ref{thrndrtcm} and then readily verify the following proposition.
  
\begin{proposition}\label{thderndrtc}
(i) For two integers $1\le\ell\le m$ and  $v\ge 1$  and a real $\gamma \ge 1$, Alg. \ref{algrndrtcm}v,$\ell$ evaluates $p(x)$
at $\lfloor10m\gamma\log_{2}(4d+2)\rfloor v$ points; this evaluation involves more ops than the other stages of the algorithm,
even in the cases of the Mandelbrot and sparse  polynomials $p$ (cf. Observation \ref{obcstch}).
 (ii) Output 1 of Alg. \ref{algrndrtcm}v,1  and  the output 0 of Alg. \ref{algrndrtcm}v,$m$ are certified.
 (iii) Output 0 of Alg. \ref{algrndrtcm}v,1  and  output 1 of Alg. \ref{algrndrtcm}v,$m$ are correct with a probability at least $1-1/\gamma^{v}$.
 \end{proposition}

%------------------------------------------------------------------------------------------------
 
\noindent The proposition bounds the cost of $\ell$-tests for $\ell=1$
 (e/i tests) and  $\ell=m$.
 This is sufficient for our root-finding, but our next natural extension covers   $\ell$-tests for any  integer $\ell$ in the range $1<\ell <  m$.

Assign the values 0 or 1 to the output integers $g$ as follows: 
compute the average of the $v$ outputs in $v$ independent applications of the test and  output $g$ denoting the  integer closest to that average; if the average is 1/2,  write  
$g=0$ to break ties. 

This extension of Alg. \ref{algrndrtcm}v,$\ell$ is only interesting for
 $1<\ell<m$; otherwise its
cost/error estimates are inferior to those of Prop. \ref{thderndrtc}.
 
\begin{proposition}\label{thderndrtc1}
For an integer $\ell$  such that $1< \ell< m\le d$ and real $v\ge 1$ and $\gamma >1$,
  Algs. \ref{algrndrtcm}v,$\ell$ queries evaluation oracle at $\lfloor10m\gamma\log_{2}(4d+2)\rfloor v$ locations, and its
output is correct with a probability at least 
$1-(4/\gamma)^{v/2}$.
\end{proposition}
\begin{proof}
Since $D(0,1)\subset D(0,\sqrt 2)$,
we only consider
 the following  three cases:
 
(i) $\#(D(0,1))<\ell$ and $\#(D(0,2^{0.5}))\ge\ell$,

(ii) $\#(D(0,2^{0.5}))<\ell$, and then also $\#(D(0,1))<\ell$,

(iii) $\#(D(0,1))\ge\ell$, and then also  $\#(D(0,2^{0.5}))\ge\ell$.

%------------------------------------------------------------------------------------------------
In case (i) the output $g$  is always correct. 

In case (ii) the output 0 is correct, while the output 1 is wrong and  occurs if and only  if more than $v/2$ tests go wrong, by outputting 1.

Likewise, in case (iii)
the output 1 is correct, while the output 0 occurs if and only  if more than $v/2$ tests go wrong, by outputting 0.

In both cases (ii) and (iii) we deal with  
the binomial distribution having  failure probability $1/\gamma$, and so (cf. \cite{C46}) the probability $P$ of at least 
 $v/2$ failures among $v$ tests is at most $\sum_{j=\bar v}^v
\begin{pmatrix} v\\j \end{pmatrix}T_j$ for
$T_j= \gamma^{-j}(1-1/\gamma)^{v-j}\le\gamma^{-j}$. 

Therefore, $P\le
 \sum_{j=0}^v
 \begin{pmatrix} v\\j \end{pmatrix}=T_{\bar v}2^v\max_jT_j$. Substitute $
\sum_{j=0}^v
 \begin{pmatrix} v\\j \end{pmatrix}=2^v$ and $\bar v=\lceil v/2\rceil$ and obtain 
  $P<2^v T_{\bar v}\le 2^v/\gamma^{v/2}=(4/\gamma)^{v/2}$.
\end{proof}
 
%--------------------------------------------------------------------------------
%------------------------------------------------------------------------------

\subsection{E/i test for an annulus}\label{seiann} 

The power sums of the
roots lying in an 
annulus $A(c,\rho,\rho')$ can be expressed as the difference  of the power sums of the roots lying in the two discs 
$D(c,\rho')$ and
$D(c,\rho)$, and our  algorithms and their analysis can be readily extended, based on Thm. \ref{thpwrsm0} supporting e/i tests for an annulus. 
We can immediately replace the $m$-test of 
Sec. \ref{smvia1} with such e/i tests.

%------------------------------------------------------------------------------
 
\section{Estimation of the precision of computing}\label{sprcs}

%------------------------------------------------------------------------------

\subsection{Doubling computational precision towards its a posteriori estimation}\label{sdblpst} 

As in Part II we can obtain near-optimal  posteriori estimates
for computational precision by recursively doubling it.
 
Next we obtain its {\em a priori  estimate} of $O(b\log(d))$ bits. 
The derivation is a little simpler than in Part II
due to observations in the next subsection.

\subsection{Reduction of a priori estimates to approximation of $x$NIR$(x)$ within $\frac{3}{8}$}\label{sfrstrd} 

  In our e/i tests
 we  approximate NIR$(x)=\frac{p'(x)}{p(x)}$
 with the  first order  divided difference of (\ref{eqNRNIR}); whp we only need to apply the tests to $\theta$-isolated  discs  
 $D(c,\rho)$ for $\theta-1>0$ of order $\frac{1}{m}$ and $\rho\ge \epsilon=1/2^b$. 

 Map (\ref{eqshft}) reduces such a test for $p(x)$ to a test in the  disc $D(0,1)$ for the polynomial $t(x)=p(\frac{x-c}{\rho})$ replacing $p(x)$ and decreases the error estimate  by a factor of $\rho$. Hence  map (\ref{eqshft}) increases  the precision bound by at most  a factor of $b$, and  we obtain the claimed precision bound in $O(b\log(d))$ in case of any disc $D(c,\rho)$ from the bound $O(\log(d))$
for the unit disc $D(0,1)$.
 
 The output of an e/i test is an integer, and  we only need to certify that the output error bound of Alg. \ref{algdblexpk} %and  \ref{algrndrtcm}  
 is less than $\frac{1}{2}$. The error can be magnified by a factor of $v$ in $v$ applications of Alg. \ref{algdblexpk} but this factor is immaterial in our notation $\tilde O$. 

Our e/i tests compute the   $q$ values $\rho x\frac{p'(c+\rho x)}{p(c+\rho x)}=x\frac{t'(x)}{t(x)}$,
for $x=\zeta^g$,  $\zeta$ of (\ref{equ7.12.2}), and
$g=0,1,\dots,q-1$, sum 
these values,  and 
divide the sum by $q$.

By slightly abusing notation  write ${\rm NIR}(x):=\frac{t'(x)}{t(x)}$ rather than  ${\rm NIR}(x)=\frac{p'(x)}{p(x)}$. According to straightforward error analysis (cf. \cite[Part A of Ch.3]{BP94} and the references therein) rounding errors introduced by these additions and division are dominated, and we can ignore them;  then  it is sufficient to compute $\zeta^g{\rm NIR}(\zeta^g)$ within, say, $\frac{3}{8}$ for every $g$. Indeed, in that case
 the sum of $q$ error bounds is at most $\frac{3q}{8}$ and decreases  below $\frac{1}{2}$ in division by $q$.

\subsection{Precision of representation of NIR$(x)$ within $\frac{1}{8}$}\label{sprcrprs} 

   Recall  Eqn. (\ref{eqratio}) and represent $x{\rm NIR}(x)$ for $|x|=1$ within $\frac{1}{8}$ by using a precision of $O(\log(d))$ bits.  

\begin{proposition}\label{thnirprc} 
Write $C:=C(c,\rho)$ and assume that $|x|=1$ and the circle $C$
 is $\theta$-isolated for $\theta>1$. Then 
%\begin{equation}%\label{eqnircrh}
$|x{\rm NIR}(x)|\le
\frac{d}{1-1/\theta}=\frac{\theta d}{\theta-1}$. 
%\end{equation}
\end{proposition}
\begin{proof}
 Eqn. (\ref{eqratio}) implies that 
$|{\rm NIR}(x)|=|\sum_{j=1}^d\frac{1}{x-z_j}|\le\sum_{j=1}^d\frac{1}{|x-z_j|}$ where   $|x-z_j|\ge (1-1/\theta)\rho$ for 
$|z_j|<1$, while
$|x-z_j|\ge (\theta-1)\rho$ for $|z_j|>1$ since $i(C)\ge \theta$. Combine these bounds, write
$m:=\#(D(c,\rho))$, and obtain 
$|x{\rm NIR}(x)|\le \frac{m}{1-1/\theta}+ \frac{d-m}{\theta-1}\le \frac{d}{1-1/\theta}$ for $|x|=1$.
\end{proof}

\begin{corollary}\label{coprc0}
 One can represent 
 $x{\rm NIR}(x)$ within $\frac{1}{8}$ for $|x|=1$ by using a precision of 
$3+\lceil\log_2(\frac{d}{1-1/\theta})\rceil$
    bits, which is in $O(\log(d))$ provided that  $\frac{1}{\theta-1}=d^{O(1)}$.
 \end{corollary}
 
\subsection{What is left to estimate?}\label{srmnngestm} 
 
 Instead of $x{\rm NIR}(x)=\frac{x t'(x)}{t(x)}$ we actually
approximate $\frac{x t'(y)}{t(x)}$ where $|y-x|\le \delta$ and $t'(y)$ is equal to the divided difference of
  (\ref{eqNRNIR}).  Thus, we
shall increase the above error bound $\frac{1}{8}$ by adding to it the upper bounds   
  $\frac{1}{8}$ on
$\alpha:=|\frac{x t'(x)}{t(x)}-\frac{x t'(y)}{t(x)}|$ for $|x|=1$ and on
 the rounding error $\beta$ of computing $
x\frac{t(x)-t(x-\delta)}{\delta t(x)}$. 
Next we  estimate $\alpha$ in terms of $d$, $\theta$, and $\delta$ --
by extending the proof of 
\cite[Lemma 3.6]{LV16}, then   
 readily ensure that $\alpha<\frac{1}{8}$ by choosing $\delta$ with a precision $\log_2(1/|\delta|)$ of order $\log(d)$, and finally
  estimate $\beta$. 
    We simplify our estimates by dropping the factor $x$ where $|x|=1$.
  
\subsection{Precision of  approximation with divided difference}\label{sprcappr} 
  
  Recall the following lemma, which is \cite[Fact 3.5]{LV16}. 
  
\begin{lemma}\label{lefct35}  
  For  $t(x)=\prod_{j=1}^d(x-y_j)$ and a non-negative integer $j\le d$ it holds that
$$t^{(j)}(x)=j!~t(x)\sum_{S_{j,d}}~\prod_{j\in S_{j,d}} \frac{1}{x-y_j}$$
where the summation
$\sum_{S_{j,d}}$ is over all subsets $S_{j,d}$ of  the set $\{1,\dots,d\}$ having cardinality $j$. 
\end{lemma}  
  
\begin{proposition}\label{thdltprc01}
Under the assumptions of Prop. \ref{thnirprc} let 
$w=w(x):=\max_{j=1}\frac{1}{|x-y_j|}$. Then
\\
$|t(x)|\alpha=|\frac{t(x)-t(x-\delta)}{\delta}-t'(x)|\le |t(x)\frac{ (d ~w)^2\delta}{1-\delta~d~w}|$. %for $w=\le \frac{\theta}
%{\theta-1}$.
\end{proposition} 
 
\begin{proof}
The
claimed bound on $|\xi'|$ follows
from  (\ref{eqratio}). It remains to 
apply the  first five lines of 
the proof of \cite[Lemma 3.6]{LV16} with $f$, $\alpha$, $n$, and $\xi$ replaced by
 $t$, $-\delta$, $d$, and $w$, s respectively. 
Namely, first obtain from Taylor's expansion that 
 $t(x)-t(x-\delta)=\sum_{j=0}^{\infty}\frac{\delta^j}{j!}t^{(j)}(x)-t(x)$.
 
 Substitute the expressions of Lemma \ref{lefct35} and obtain
 $$t(x)-t(x-\delta)=\delta t'(x)+\sum_{j=2}^{\infty} \delta^j t(x)\sum_{S_{j,d}}~\prod_{j\in S_{j,d}}\frac{1}{x-y_j}.$$
 Combine this equation with the assumed bounds on 
 $\frac{1}{|x-y_j|}$ 
 and deduce that 
 $$\Big|\frac{t(x)-t(x-\delta)}{\delta}-t'(x)\Big|\le \Big|\frac{f(x)}{\delta}\sum_{j=2}^{\infty} \delta^jw^j d^j\Big|\le 
 \Big|f(x)\frac{(d~w)^2~\delta}{1-\delta~d~w}\Big|.$$ 
 \end{proof}
In our estimates for $\alpha$ and $\delta$
 we  apply Prop. \ref{thnirprc} for
 $w\le \frac{\theta}
{\theta-1}$ and obtain
\begin{corollary}\label{coprc} 
 Under the assumptions of   Prop. \ref{thnirprc}, let  $w\le \frac{\theta}
{\theta-1}$. Then $$\alpha<1/8~{\rm for}~|\delta|\le \frac{(\theta-1)^2}{10d^2\theta^2}.$$
 \end{corollary}

\subsection{The impact of rounding errors}\label{simpctrnd}

To approximate  $|x{\rm NIR}_{c,\rho}(x)|$ within $\frac{3}{8}$ it remains to approximate $\beta$   within $\frac{1}{8}$.
  
%Assume   $\delta$ and $x$
%  are fixed such that  $|x|=|x-\delta|=1$ and estimate precision supporting such approximation of $x\frac{t(x)-t(x-\delta)}{t(x) \delta}=
%  \frac{\delta}{x}(1-\frac{t(x-\delta)}{t(x)})$.
 
 \begin{proposition}\label{therrnbl} 
Suppose that a black box oracle has computed  the values $t(y)$  within a relative error bounds $\nabla(y)$ 
for $y=x$, $y=x-\delta$, and
$\delta$ of Cor. \ref{coprc}.
%\footnote{Here we ignore the nominal impact of rounding errors  in multiplication by $x$ and  division by $t(x)$ and $\delta$.} 
  Then we can ensure  that $|\beta|<\frac{1}{8}$ by choosing 
 a proper $\nabla(y)$ of order $1/d^{O(1)}$,
 represented with a precision of $\log(\frac{1}{\nabla(y)})=O(\log(d))$ bits.
\end{proposition}

\begin{proof}
Write $\beta\delta=\frac{t(x-\delta)(1+\nabla(x-\delta))}{t(x)(1+\nabla(x))}-\frac{t(x-\delta)}{t(x)}=\frac{t(x-\delta)}{t(x)}~\frac{\nabla(x-\delta)-\nabla(x)}{1+\nabla(x)}$.

 Taylor-Lagrange's formula  implies that $\frac
{t(x-\delta)}{t(x)}=1+\frac{t(x-\delta)-t(x)}{t(x)}=1+\delta\frac{t'(u)}{t(x)}$, 
for $u\in[x-\delta,x]$. Hence $|\frac
{t(x-\delta)}{t(x)}|\le 1+|(1+\alpha')\delta|$ for $\alpha'=\frac{t'(u)-t(x)}{t(x)}$. 

By extending Cor. \ref{coprc}  obtain 
$|\alpha'|\le \frac{1}{8}$. 

Hence 
$|\frac{t(x-\delta)}{t(x)}|\le w:= 1+
\frac{9|\delta|}{8}$,
while $|\beta\delta|\le w\frac{|\nabla(x)|+|\nabla(x-\delta)|}{1-|\nabla(x)|}$.

Now choose $\nabla(y)$ such that $|\nabla(y)|\le\frac{|\delta|}{17w}$ for $y=x$
and $y=x-\delta$, and so $ w(|\nabla(x)|+|\nabla(x-\delta)|)\le \frac{2}{17\delta}$.
 Then verify that
$\log(\frac{1}{|\nabla(y)|})=O(\log(d))$, $1-|\nabla(x)|\ge 1-\frac{|\delta|}{17w}>\frac{16}{17}$, and $|\delta\beta|< \frac{|\delta|}{8}$.                                                             
\end{proof} 

By combining Cors. \ref{coprc0} and \ref{coprc} and Prop. \ref{therrnbl}  obtain
\begin{corollary}\label{coprcall}
Suppose that a black box oracle can output   approximations of a
$d$-th degree polynomial $t(x)$  with any relative error in $1/d^{O(1)}$ for  $|x|\le 1$ and let the unit disc be $\theta$-isolated for $t(x)$ and
for  $\frac{1}{\theta-1}=d^{O(1)}$. Then we can perform   e/i test for $t(x)$ on the unit disc by  using a precision of $O(b\log(d))$ bits.
\end{corollary} 
  
%\begin{remark}\label{reprc}
%We derived our precision bound $O(\log(d))$ based on operating entirely with NIR$(x)$.
%For comparison, if we relied on computing $p(x)$ and on the  bound   
% $\min_{x:~|x|=1}(|p(x)|/\sum_{i=0}^d|p_i|)\ge (\frac{\theta-1}{2\theta})^d$ of \cite[Eqn. (9.4)]{S82},
% we would have only obtained  a bound of  order $md$ on the computational precision of our  root-finders. 
 %\end{remark}

%Notice that the algorithms of this
%section together with  %those of Sec. \ref{mdls} support our estimate of Thm. \ref{thhlacovrl}
%up to a factor of $b$.

%------------------------------------------------------------------------------
%------------------------------------------------------------------------------
 
\section{Bit operation  cost of Cauchy  e/i tests and root-finding}\label{sblnrtfnd}

%------------------------------------------------------------------------------

We omit extension of the precision bound to the bit complexity estimate for the eigenvalue problem because  
 it can be verbatim the same as in Part II.
 
\subsection{Our goal and a basic theorem}\label{sglbscthm}
  
Next  we  estimate  the bit operation complexity of our root-finders by combining them  with  fast multipoint evaluation  of a polynomial by means of the algorithm of  \cite{F72,MB72} 
  non-trivially extended in \cite{K98,M21} to support the following theorem:
 
\begin{theorem}\label{thkrrn} 
 Given a positive $b$, the coefficients of  a polynomial $t(x):=\sum_{i=0}^d t_ix^i$ such that   $||t(x)||_1=\sum_{i=0}^d|t_i|\le 2^{\tau}$, and $q$ complex points $x_{1},\dots,x_q$  in the unit disc $D(0,1)$, one can approximate the values $t(x_{1}),\dots,t(x_q)$
within  $1/2^b$ by using $\tilde O((d+q)(b+\tau))$ bit operations. 

\end{theorem}  
\begin{proof}  
For $q=d$ this is 
\cite[Thm. 2]{M21}, based on using novel data structure for hyperbolic approximation of $p$, proposed and analyzed in \cite{M21}. 
Trivially extend this theorem to the cases where $q>d$ (by  considering $t(x)$ a 
polynomial of degree $q$ with $q-d$ leading coefficients 0)
and  where $q<d$ (by removing $d-q$ points $x_i$, with no increase of the complexity bound).
\end{proof}  

 We only apply this theorem for $\tau=0$, in which case its 
bit operation bound turns into  
\begin{equation}
\label{eqmrz}
\mathbb B_{\rm Moroz}=\tilde O((d+q)b)
\end{equation}
 
 For comparison, the previous State of the Art can be represented by the bound
 \begin{equation}
 \label{eqkrrnn}
\mathbb B_{\rm Kirr}=\tilde O((d+q)(b+q)),
\end{equation}
due to  \cite[Thm. 3.9, Alg. 5.3, and Appendix A.3]{K98} based on combining Kirrinnis's 
and Sch{\"o}nhage's study  of  
the bit operation cost
 of basic polynomial computations (cf. \cite{S82}) with  the algorithm  of \cite{F72,MB72}.\footnote{Precision of computing
 required in the latter 
 algorithm grows fast as $d$ increases; e.g.,   the IEEE standard double precision of the customary numerical computations is not enough to support that algorithm already for $d>50$; for $b=O(\log(d))$ one can fix this deficiency by applying FMM (see Remark  
  \ref{refmm}) to the equivalent task of multiplication of Vandermonde matrix by a vector. One can, however, support  bounds of Eqns. (\ref{eqmrz}) and (\ref{eqkrrnn}) on the overall bit operation complexity  by computing with multiple precision for larger values $b$.}  Moroz obtained 
 \cite[Thm. 2]{M21} by applying bound (\ref{eqkrrnn}) to his lower degree polynomials approximating $p(x)$.

\subsection{Acceleration with fast multipoint polynomial approximation}\label{srtfndmltpnt} 
   
 Given a complex $c$, a pair of positive $b$ and $\rho$,  and
 $d+1$ coefficients of a polynomial $p=p(x)$ of (\ref{eqpoly}) such that the disc $D(c,\rho)$ is isolated and contains precisely $m$ roots, we approximate all these roots within $R/2^b$
 for $R=|c|+\rho$
by applying subdivision root-finding iterations 
with e/i tests of Sec. \ref{mdls}.  Then  every subdivision step   
  is reduced  to application of  e/i tests
 to $\bar m=O(m)$ discs $D(c_{\lambda}, \rho_{\lambda})$ for  $\rho_{\lambda}\ge R/2^b$ and all $\lambda$. 
 
 Such a test, based on our e/i test of Sec. \ref{mdls},  amounts  to approximation of  $p(x)$  at $O(m\log(d))$ equally spaced points $x$ on each of
 $v$ circles sharing the  center $c_{\lambda}$ and having
 radii in the range  $[\rho_{\lambda}/\sigma,\rho_{\lambda}\sigma]$
for $1<\sigma<\sqrt 2$.

 We only use 
 Thm. \ref{thkrrn} for $\tau=0$ to cover the case where
a polynomial $t(x)$ 
with $||t(x)||_1\le 1$
is evaluated at points $x_j\in D(0,1)$. To ensure these assumptions  
 we first scale the variable $x\mapsto
Rx$ to map the discs $D(c_{\lambda},\rho_{\lambda}\sigma)\mapsto D(\bar c_{\lambda},\bar\rho_{\lambda})\subseteq D(0,1)$, for
 $\bar c_{\lambda}= c_{\lambda}/R$,
 $\bar\rho_{\lambda}=\rho_{\lambda}/R\ge 1/2^b$,
 and all $\lambda$.
 
 Then we write $t(x):= p(Rx)/\psi$ for $\psi=O(R^d ||p(x)||_1)$
 such that
$||t(x)||_1=1$ and     
approximate $t(x)$ within $1/2^{\bar b}$ or equivalently within the relative error bound  $1/(|t(x)|2^{\bar b})$ at $\bar q:=qv$ points $x$ in  $\bar m$ e/i tests.\footnote{Scaling $p$
and $x$ above is done once and for all e/i tests; to compensate for this we only need to change our bound on the output  precision $b$ by $\lceil|\log_2R|\rceil$ bits.}

Due to Cor. \ref{coprcall} it is sufficient to ensure the bound 
$1/(|t(x)|2^{\bar b})=1/d^{O(1)}$; we  deduce it by choosing any $\bar b\ge \log(\frac{1}{|t(x)|})+O(\log(d))$. 

In   our e/i tests we only need to approximate  $t(x)$ at the points $x$ lying on $\theta_{\lambda}$-isolated circles 
 where
 $\frac{1}{\theta_{\lambda}-1}=O(m)$. Map the disc $D(\bar c_{\lambda},\bar\rho_{\lambda})$ into the unit disc $D(0,1)$. This does not change isolation of the boundary circle and may increase the norm $||t(x)||_1$
 by  at most  factor of $2^{b+d}$ because $|\bar c_{\lambda}|\le 1$ and 
 $\bar\rho_{\lambda}\ge 1/2^b$. Now substitute 
$||t(x)||_1\le 2^{b+d}$
and 
$\log(\frac{||t(x)||_1}{|t(x)|})=O(d\log(m))$ 
(cf. \cite[Eqn.(9.4)]{S82})\footnote{This bound is readily deduced from the following non-trivial result
\cite[Thm. 4.2]{S82}: Let
$t(x)=f\prod_{j=1}^m(x-z_j)\prod_{j=m+1}^d(\frac{x}{z_{j}}-1)$, $||t(x)||_1=1$,
$|z_j|<1$ for $j\le m$, and
$|z_j|>1$ for $j> m$. Then  
$|f|\ge 1/2^d$.}
and obtain that
%\begin{equation}\label{eqbrbdm}
$\bar b=\log(\frac{1}{|t(x)|})+O(\log(d))=O(d\log(m)+b)$.
%\end{equation}
\begin{remark}\label{relbt}
For $m=d$ the above bound on $\log(\frac{||t(x)||_1}{|t(x)|})$ of \cite[Eqn.(9.4)]{S82}
 is sharp up to a constant factor and is reached for the polynomial $t(x)=(x+1-\frac{1}{m})^d/(2-\frac{1}{m})^d$, $\rho_{\lambda}=1$, and $x=-1$. For $m<d$  \cite[Thm.    
4.5]{S82} implies a little stronger   bounds, namely,  $\log(\frac{||t(x)||_1}{|t(x)|})=O(d+ m\log(m))$ and hence
$\bar b=O(m\log(m)+d+b)$
 because all discs $D(\bar c_{\lambda},\bar\rho_{\lambda})$ lie in  $\theta$-isolated unit disc $D(0,1)$ for $\theta-1$
exceeding a positive constant. 
\end{remark} 

\subsection{The bit operation complexity estimates}\label{sfblncsteirt}

 Apply  Thm. \ref{thkrrn} 
with $q$ and $b$ replaced  
by $\bar q=O(m^2)$ and $\bar b=O(b+d+m\log(m))$, respectively,\footnote{Under the Random Root Model of Sec. \ref{scmplests} we can whp decrease this bound by a factor of $m$  and then extend this decrease  to the related subsequent estimates.}  and obtain that the bit operation cost of our $O(m)$ e/i tests  of Sec. \ref{mdls} at any fixed subdivision step is in 
 $\tilde O((b+d+m)m^2)$
  for $1\le m\le d$. This implies the upper estimate
\begin{equation}\label{eqbrts}    
 \mathbb B_{\rm roots}= \tilde O((b+d)m^2b)
\end{equation}
for the overall bit operation cost  of our $O(b)$ subdivision steps, which includes the time-complexity of the evaluation 
oracle. Based on \cite[Alg. 5.3]{K98} 
and Eqn. (\ref{eqkrrnn})  obtain  a little larger  upper bound on the bit operation cost of our subdivision root-finders:
\begin{equation}\label{eqbrts'} 
\mathbb B_{\rm roots}'= \tilde O((m^2 + d + b)(m^2 + d)b).
\end{equation}

%-----------------------------------------------------------------------------
% \medskip 
%\medskip
%\bigskip
\medskip
%\medskip

{\Large \bf PART IV: Acceleration of Root-Finding With Compression}
\medskip
\medskip

In this part of the paper  we decrease the number 
  of subdivision steps of our root-finders  of Parts II and III by a factor of  $b/\log(b)$, thus  arriving at our Main Theorems 1 -- 4.

  In Sec. \ref{scmpr} we extend the outline of this acceleration from Sec. \ref{saccIV}. It relies on an algorithm for compression of  an isolated  disc, which we devise and analyze also  in Sec. \ref{scmpr}.
  The algorithm involves
estimation of the $\ell$th smallest root radius, which is of independent interest and is covered in Sec. \ref{sextrrk}. It is reduced to recursive application of  an $\ell$-test; we present it  in  Sec. \ref{mdls} for any positive $\ell\le d$ and any input disc, but in Sec. \ref{smvia1} we specify  simpler tests that are sufficient in application to compression.
In Sec.
\ref{sovrllcmpl}
we estimate the overall
complexity of the resulting root-finder, which intertwines at most $2m-2$ compression steps with e/i tests, whose number we estimate
in Sec. \ref{scntei}. In Sec.
\ref{sextr} we further accelerate polynomial root-finding for a large class of inputs.
  
\section{Estimation of the $\ell$-th smallest root radius}\label{sextrrk}  
   In this section we
  narrow a fixed range   $[0,R]$ for the
  $\ell$-th smallest root radius $r_{d-\ell+1}(c)$ from a fixed complex point $c$. 
  %In the next section we apply our algorithm to subdivision process, but it is also of independent interest.  
  To simplify our exposition we let $R=1$ and $c=0$, wlog due to
  (\ref{eqshft}). 
\medskip

{\bf Problem RR$_{\ell}$.} Compute a range $[\rho_-,\rho_+]$
such that $0\le \rho_-\le r_{d-\ell+1}\le \rho_+$ and $\rho_+\le \epsilon=1/2^b$ or 
$\rho_+\le\beta\rho_-$
for fixed $b>0$
and $\beta=2^{\phi}>1$,
  provided that we are given a positive integer $\ell\le d$,  initial range $[0,1]$ for $r_{d-\ell+1}$, and a $\sigma$-soft $\ell$-test for a fixed $\sigma>1$.
  
  We first use this test to check whether
 $r_{d-\ell+1}\le 1/2^b$. If so, we are  done. Otherwise we seek
  $r_{d-\ell+1}$ in the range $[\frac{1}{2^b\sigma},1]$. We apply binary search, said to be {\em bisection of logarithms} or {\em BoL},  to the range $[-\bar b,0]=[-\log_2(\frac{\sigma}{\epsilon}),\log_2(1)]$, for $\bar b=\log_2(\frac{\sigma}{\epsilon})$,
  to approximate $\log_2(r_{d-\ell+1})$.  
 
Since the test is soft, so is BoL
as well:
for   a range $[b_-,b_+]$ with  midpoint $\widehat b=0.5(b_++b_-)$ regular rigid bisection continues the  search in the range  $[b_-,\widehat b]$ or $[\widehat b,b_+]$, but our  soft BoL  continues it in the range  $[b_-,(1+\alpha)\widehat b]$ or $[(1-\alpha)\widehat b,b_+]$ for a fixed  small positive $\alpha$.
Accordingly we slightly
 increase the range of search for 
$r_{d-\ell+1}$ to
$[-\bar b,0]$
 for $\bar b:=\log_2(\frac{\sigma}{\epsilon})$.
 BoL still converges linearly and just  a little slower than the regular rigid bisection: the approximation error decreases by a factor of  $\frac{2}{1+\alpha}$ rather than  by twice.\footnote{If we applied BoL to the range $[0,1]$,
   we would have to bisect the infinite range $[-\infty,0]$, but we  deal with  the narrower  range  $[-b,0]$ since we  have  narrowed the range $[0,1]$ to $[\epsilon,1]$.} 
      
\begin{algorithm}\label{algdblexpk}  
{\em Bracketing the $\ell$-th smallest root radius  based on $\ell$-test.}  
%see Remark \ref{retrhob} and Sec. \ref{sallrtrd}.
 \begin{description}
 INPUT:  An integer $\ell$ such that   $1\le \ell\le d$ and  $\#(D(0,1))\le \ell$, $\sigma=1+\alpha$ for  $0<\alpha< 1/2$, 
  a $\sigma$-soft $\ell$-test$_{0,\rho}$
  for a positive $\rho$ (see  Sec. \ref{seiann} for such tests),  
 and two  tolerance values  $\beta:=2^{\phi}>1$ 
and 
$\epsilon=1/2^{b}$,   
  $b \ge 1$. 
  
   OUTPUT: A  solution
 $[\rho_-,\rho_+]$
 of Problem RR.
 
INITIALIZATION: Write $\bar b:=
\log_2(\frac{\sigma}{\epsilon})$,
$b_-:=-\bar b$, and $b_+:=0$.

COMPUTATIONS: Apply   
$\ell$-test$_{0,\frac{\epsilon}{\sigma}}$. Stop and output 
  $[\rho_-,\rho_+]= [0,\epsilon]$ if it outputs 1.
Otherwise  
apply  $\ell$-test$_{0,\rho}$ for  
\begin{equation}\label{eqtrhob}
\rho_-=2^{b_-},~\rho_+=2^{b_+},~ \rho:=2^{\widehat b},~{\rm and}~\widehat b:=\lceil(b_-+b_+)/2\rceil.
\end{equation}
 Write $b_-:=(1-\alpha) \widehat b$
if the $\ell$-test outputs 0; write $b_+:=(1+\alpha) \widehat b$
if the test outputs 1.
Then update $\widehat b$ and  $\rho$ 
% and $t(x)$
according to (\ref{eqtrhob}).

Continue recursively: 
stop and output the current range 
 $[\rho_-,\rho_+]=[2^{b_-},2^{b_+}]$ if $b_+-b_-\le \phi$, that is, if $\rho_+\le \beta\rho_-$; otherwise
 update  
  $\widehat b$  and  $\rho$ and reapply the computations. 
\end{description} 
\end{algorithm}

The disc $D(0,\epsilon)$
contains at least $\ell$ roots, that is, the range $[0,\epsilon]$ brackets the $\ell$th smallest root radius $r_{d-\ell+1}$ if the $\sigma$-soft $\ell$-test applied to the disc $D(0,\frac{\epsilon}{\sigma})$ outputs 1. 
Otherwise, the disc $D(0,\frac{\epsilon}{\sigma})$ contains less than $\ell$ roots, and so
the root radius $r_{d-\ell+1}$ lies in  the range  $[\frac{\epsilon}{\sigma},1]$, in which case we continue our search. 

Readily verify that our policy of recursively updating the range  $[b_-,b_+]$ maintains this property throughout and that updating stops where $b_+-b_-\le \phi$, that is, where $\rho_+\le \beta\rho_-$. This implies {\em correctness} of Alg. \ref{algdblexpk}. 

Let us estimate the number 
  $\mathbb I$ of the invocations of 
  an $\ell$-test. 
 
\begin{proposition}\label{thintlrng}
Alg. \ref{algdblexpk} invokes 
an $\ell$-test $\mathbb I$ times where
$ \mathbb I\le \lceil\log_{\nu}(\frac{\bar b}{\phi})\rceil+1$
for $\nu=\frac{2}{1+\alpha}$, and so 
$ \mathbb I=O(\log(b))$ provided that 
$\nu-1$, $\sigma-1$, and $\phi$ are positive constants.
\end{proposition}
\begin{proof} 
  Alg. \ref{algdblexpk} stops if the    $\ell$-test   outputs 1. 
 Otherwise the test decreases the length $\bar b=b_+-b_-$ of its input range $[b_-,b_+]$ for the exponent to $0.5(1+\alpha)\bar b$. 
   $i$ 
applications change it into
$\frac{(1+\alpha)^i}
{2^i}\bar b$
 for $i=1,2,\dots$, and
 the proposition
  follows.
 %for the number of invocations of the $%\ell$-test  $\mathcal T$.
\end{proof}

\begin{remark}\label{retrhob}  
 In the  case where we know that $\#(D(0,\rho))\ge \ell$ for a fixed  $\rho$ in the range $[\epsilon,1]$, 
 we only need to refine the initial range $[\rho,1]$  for the root radius $r_{d-\ell+1}$.
\end{remark}   
 
Clearly, $\mathbb I$ applications of $\ell$-test increase the number of evaluation points for the ratio
$\frac{p'(x)}{p(x)}$ by at most 
a factor of $\mathbb I$, and if
the $\ell$-test is randomized,
then the error probability also grows by at most a factor of $\mathbb I$.
Let us specify these estimates for Alg. \ref{algdblexpk}.   

\begin{corollary}\label{corrcst}
Let precisely $\ell$ zeros of a black box polynomial $p(x)$ of a degree $d$  lie in the unit disc, that is, let $\#(D(0,1))=\ell$,
and let $i(D(0,1))\ge\sigma$, for $\sigma-1$ exceeding a positive constant. 
Given  two positive 
tolerance 
values $\epsilon=1/2^b$ and $\phi$ and a black box $\sigma$-soft 
$\ell$-test, 
which evaluates the polynomial $p(x)$ at 
 $\mathcal {NR}$  points, Alg. \ref{algdblexpk} approximates the $\ell$-th smallest
root radius $|z_j|$ within the relative error $2^{\phi}$
or determines that  $|z_j|\le \epsilon$,  at the cost of evaluation of polynomial $p(x)$ at
$\mathbb I \cdot\mathcal {NR}$ points for $\mathbb I$  estimated in Prop. \ref{thintlrng},
$\mathbb I=O(\log(b))$
provided that 
$\nu-1$ and $\phi$ are positive constants. 
\end{corollary}

\section{Compression of an isolated  disc}\label{scmpr}

%------------------------------------------------------------------------------

\subsection{A motivation: compact components in the union  of suspect squares}\label{scmptrcmpcmpn}  
  
As in Sec. \ref{saccIV},
 partition the union of suspect squares at every subdivision iteration into components and associate breaking
(partition of)  components into sub-components with  component tree,
which has at most $2m-2$ edges.
 Next we will bound the numbers of subdivision steps (and hence of e/i tests)  at and between partitions of  components. 
 
 We call a pair of suspect squares {\em non-adjacent}  if they share no vertices.
Call a component {\em compact} if it contains   
just a single non-adjacent suspect square, that is, consists of at most four suspect squares sharing a  vertex (see Fig. \ref{figcmpc}).
\begin{figure}
\centering
\resizebox{!}{0.15\textheight}{
\begin{tikzpicture}
\draw (0, 0) circle (0.74);
\draw (0, 0) circle (1.42);
\draw (-2, 2) -- (2,2) -- (2, -2) -- (-2, -2) -- (-2, 2);
\draw (-0.5, 0.5) -- (0.5,0.5) -- (0.5, -0.5) -- (-0.5, -0.5) -- (-0.5, 0.5);
\draw (-1, 2) -- (-1, -2);
\draw (1, 2) -- (1, -2);
\draw (2, -1) -- (-2, -1);
\draw (2, 1) -- (-2, 1);
\draw (0, 2) -- (0, -2);
\draw (-2, 0) -- (2, 0);
\draw[red] (-0.1, 0.15) node [left]{$*$}; 
\draw[blue] (-0.1, -0.15) node [left]{$*$};
\draw[red] (-0.1, 0.15) node [right]{$*$}; 
\draw[blue] (-0.1, -0.15) node [right]{$*$}; 
\end{tikzpicture}}
\caption{Roots (asterisks)  define compact components. A subdivision step halves their diameters  and at least doubles isolation of their minimal covering squares and discs.}\label{figcmpc}
\end{figure}
%--------------------------------------------------------------------------------. 

 \begin{figure}
\centering
%QL0921 new figure
\begin{tikzpicture}
         \draw (0, 2) circle (1.8cm) ;
         \draw (0, -2) circle (1.8cm);
         \draw (-3, 3) -- (3,3) -- (3, -3) -- (-3, -3) -- (-3, 3);
         \draw (-1, 3) -- (-1, -3);
         \draw (1, 3) -- (1, -3);
         \draw (-3, 1) -- (3, 1);
         \draw (-3, -1) -- (3, -1);
         
         \node at (0,2) [circle,fill,inner sep=1.5pt]{};
         \node at (0,-2) [circle,fill,inner sep=1.5pt]{};
\end{tikzpicture}
\caption{Two discs superscribing two non-adjacent suspect squares 
are not minimal but
do not overlap.}
\label{fignnadjdscs}
\end{figure}

\begin{observation}\label{obcmp} (See Fig. \ref{fignnadjdscs}.) The $\sigma$-covers of two non-adjacent suspect squares
%contain a pair of roots and   
 are separated by at least the distance $a\Delta$ for the side length $\Delta$ and {\em separation  coefficient} 
 $$a:=2-\sigma \sqrt 2>0~{\rm for}~1<\sigma< \sqrt 2.$$     
\end{observation}
 
\begin{lemma}\label{eqbrkcmp}
 Let $|z_g-z_h|\ge \sigma\Delta$, for 
a pair of roots $z_g$ 
and $z_h$, the softness  $\sigma$ of e/i tests, and the side length $\Delta$ of a suspect square, and let $i>0.5+ \log_2(m)$. Then in at most  
 $i$ subdivision steps the roots $x$ and $y$  lie 
in distinct components.  
\end{lemma}
\begin{proof}
 Two roots  $z_g$ and $z_h$ lying in the same component  at the $i$th subdivision step are connected by a chain of $\sigma$-covers of  at most $m$ suspect squares, each having  diameter
$\sigma\Delta/2^{i-0.5}$. 
Hence
$$|z_g-z_h|\le m \sigma\Delta/2^{i-0.5},~  
i\le \log_2\Big(\frac{m\sigma\Delta}{|z_g-z_h|}\Big)+0.5.$$

 Substitute the bound $|z_g-z_h|\ge \sigma\Delta$ and deduce that $m\le 2^{i-0.5}$ or equivalently $i\le 0.5+ \log_2(m)$ if   
the roots $z_g$ and $z_h$ are connected by a chain of suspect squares, that is, lie in the same component. This implies the lemma.
\end{proof}

\begin{corollary}\label{cobrkcmp}
 A component of suspect squares is broken in at most $1+\lfloor0.5+ \log_2(m)\rfloor$ successive subdivision steps unless it is
compact. 
\end{corollary}
%\subsection{Isolation of compact components}\label{sislcmp}
  
\begin{observation}\label{obcmpisl}
(i) A compact  component is surrounded by a frame of discarded squares (see Fig. \ref{figcmpc})  and hence is covered by a 2-isolated square.
(ii) If it stays
  compact in $h$ successive 
  subdivision steps, then its  minimal covering square and disc become $2^{h+1}$-isolated and $2^h\sqrt 2$-isolated, respectively. 
\end{observation}
\begin{proof}
Readily verify claim (i). Claim (ii) follows because a 
 subdivision step can only thicken the frame of 
discarded squares while decreasing at least by twice the diameter of any component that 
stays compact.
\end{proof} 

Cor. \ref{cobrkcmp} bounds the number of subdivision steps
between partitions of non-compact components, but compact
components can be processed very slowly (see Fig. \ref{fig4}). 
For a remedy,  we intertwine subdivision process
 with compression of  MCD of a component that stays compact in three subdivision steps, in which case the MCD is $4\sqrt 2$-isolated by virtue of Observation 
  \ref{obcmpisl}.

Next we elaborate upon such compression of MCDs.

\subsection{The compression problem and flowchart of its solution}\label{scmpralg} 
 
{\bf Problem COMPR.}  
INPUT: a  positive   $\epsilon\le 1$, a constant $\eta>0$,  a black box polynomial $p(x)$ of a degree $d$,  and an isolated  disc $D$ with a non-empty root set.

   OUTPUT: an integer $m$, a complex $c\in D$, and positive $\rho$ and $\eta$ such that 
 $\#(D(c,\rho))=\#(D)$ and the disc $D(c,\rho)$ is $\eta$-rigid
 unless $\rho\le\epsilon$.
  \medskip
 
{\bf Flowchart of a compression algorithm.} To solve Problem COMPR
we successively compute
(a) $m=\#(D)$, (b)  
 complex point $c$ lying in or near the  minimal covering disc  of the root set $X(D)$, and (c)  positive $\rho$ and and $\eta$ such that 
 $\#(D(c,\rho))=m$ 
 and the disc $D(c,\rho)$  is $\eta$-rigid
 unless $\rho\le\epsilon$.
 
 Wlog assume that $D$ is a unit disc $D(0,1)$ and 
{\em (a) compute $m=\#(D)$ for  isolated  unit disc} $D:=D(0,1)$   by using $O(\log(d))$  
 queries of an evaluation oracle (see Observations \ref{thtsts} and \ref{obcstch}). 
 
  Next we elaborate upon the algorithms for
 stages (b) and (c). 
 
\subsection{A point in or near the  minimal covering disc for the $d$ roots}\label{svrntcmpr}

 \begin{definition}\label{desstris}
  A disc is said to be {\em strongly isolated} 
  if it is $d^{\nu}$-isolated for
 a sufficiently large constant $\nu$ of our choice. In particular
 every disc containing all $d$ roots is strongly isolated.
\end{definition}
   
 Next we successively compress the unit disc  $D=D(0,1)$ in the s where it contains  $d$  roots,  is  strongly isolated,  and  is just isolated.
  
Schr{\"o}der's  iterations
  \cite{S870},  
\begin{equation}\label{eqSCHR} 
z=y-m~{\rm NR}(y),
\end{equation}
 fast
 converge to a root of multiplicity $m$  \cite[Sec. 5.4]{M07} and for $m=d$ 
 outputs a point in or near the MSD  of the 
 the set of the $d$ roots 
  if point $y$  lies far enough from that set:

\begin{theorem}\label{thqinch}
    Write $D:=D(0,1)$
  and
    $\Delta:=\Delta(X(D))$, let $\#(D)=d$,
      fix $y$ such that $|y|>2$, let $z:=y-d~{\rm NR(y)}$        and  $\bar x\in MCD(D)$.
  Then $|z-\bar x|\le \alpha~\Delta$ for
 $\alpha:= \frac{(|y|-1)^2}{(y|-2)^2}=(1+\frac{1}{(y|-2})^2$,
 and so $\alpha=2.25$ for $|y|=4$, $\alpha=25/16$ for $|y|=6$, and $\alpha\mapsto 1$ as $|y|\mapsto \infty$.
  \end{theorem}  
 \begin{proof} 
Recall that 
$\bar x,z_1,\dots,z_d \in X(D)$, $|z_j|\le 1$  and
$|\bar x-z_j|\le \Delta$ for 
$j=1,\dots,d$.
Hence (cf. (\ref{eqratio}))  
$$\Big|{\rm NIR}(y)-\frac{d}{y-\bar x}\Big|\le \sum_{j=1}^d\Big|\frac{1}{y-z_j}-\frac{1}{y-\bar x}\Big|\le
\sum_{j=1}^d\frac{|\bar x-z_j|}{|y-\bar x|\cdot|y-z_j|}\le \frac{\Delta d}{(|y|-1)^2}.$$ 
 Therefore, 
\begin{equation}\label{eqetadlt} 
  |u-w|\le \beta~{\rm for}~u:=\frac{{\rm NIR}(y)}{d},~w:=\frac{1}{y-\bar x},~{\rm and}~\beta:= \frac{\Delta}{(|y|-1)^2}, 
  \end{equation}
   and so
\begin{equation}\label{eqx-z}  
 |z-\bar x|=
  \Big|\bar x-y+\frac{d}{{\rm NIR}(y)}\Big|=\Big|\frac{1}{u}-\frac{1}{w}\Big|=\frac{|u-w|}{|uw|}\le \frac{\beta d}{|{\rm NIR}(y)|}|y-\bar x|.
  \end{equation}
       
 Deduce from (\ref{eqratio}) and the bounds $\max_{j=1}^d|z_j|\le 1<|y|$   that
 $$|{\rm NIR}(y)|=\Big|\sum_{j=1}^d\frac{1}{y- z_j}\Big|\ge d\max_{j=1}^{d}\frac{1}{|y- z_j|}\ge \frac{d}{|y|-1}.$$

Substitute this bound into   Eqn. (\ref{eqx-z}) and obtain that

  $$|z-\bar x|\le \beta~||y|-1|\cdot |y-\bar x|$$ 
  Substitute $\beta:= \frac{\Delta}{(|y|-\Delta)^2}$ of (\ref{eqetadlt}) and obtain 
   $$|\bar x-z|\le \alpha' \Delta~{\rm for}~
\alpha':=\frac{(|y|-1)|y-\bar x|}{(|y|-\Delta)^2}.$$
  Substitute the bounds  $\Delta\le  
  2$ and $|\bar x|\le 1$ and obtain the theorem.
   \end{proof} 
 
\begin{corollary}\label{cortrd1m0}
Under the assumptions of Thm.  \ref{eqetadlt} the disc   
$D(z,r_1(z))$ is $\frac{1}{2+2\alpha}$-rigid. 
 \end{corollary}\begin{proof}
 Verify the claim in the case where
 $x_d=0$, $x_1=\Delta=1$, e.g., $0\le x_j\le 1$ for all $j$, and $z=1+\alpha$ 
 and notice that this is 
  a worst case input.
  \end{proof}
  
\subsection{Compression algorithm for a disc containing $d$ roots}\label{scmpralgd}
 
Next we compress a disc containing $d$ roots  towards their  minimal covering disc. 

\begin{algorithm}\label{algcmprsch} {\em  $\epsilon$-compression 1.} 
\begin{description}   

INPUT: a  positive   $\epsilon=1/2^b\le 1$ and a black box polynomial $p(x)$ of a degree $d$ such that
 \begin{equation}\label{eqdsc1}  
  \#(D(0,1))=d.
\end{equation}
   
   OUTPUT:  a complex $z$, and a positive $r$ such that  $\#(D(z,r))=d$ and  the disc $D(z,r)$
  is $\frac{2}{13}$-rigid unless $r\le\epsilon$.
   
   INITIALIZATION: Fix 
    $y\ge 4$.

COMPUTATIONS: \\
(i) Compute and output $z=y-d~{\rm NR}~(y)$. \\ (ii) Compute an approximation $\tilde r\ge 0$ to $r_1(z)$ by applying  Alg. \ref{algdblexpk} for  $\epsilon$ replaced by $\epsilon/4$. \\
Output  $z$ and choose $r$ to define $\epsilon$-compression
 into the disc 
 $D(z,r)$.
 
Namely,  output $r=\epsilon$ if 
 $\tilde r\le 3\epsilon/4$. \\
 Otherwise output $r=\tilde r+0.25 \epsilon$, then the disc 
 $D(z,r)$ is rigid.
 \end{description}
\end{algorithm}

{\em Correctness of the algorithm} is readily verified if Alg. \ref{algdblexpk} outputs a range
$[\rho_-,\rho_+]$ of length at most $\epsilon/4$, for in this case $r_1\le r=\tilde r+\epsilon/4\le \epsilon$,
and $\#(D(z,\epsilon))=d$.

Otherwise $1/\beta\le \tilde r/r_1\le \beta$ for a constant $\beta\ge 1$
of Alg. \ref{algdblexpk}, while 
the disc $D(x,r_1)$
is $\frac{1}{2\alpha+2}$-rigid 
 by virtue of Cor. \ref{cortrd1m0}
 where  $\alpha\le 2.25$
 by virtue of  Thm. \ref{thqinch} because $y\ge 4$ in our case. 
 It follows that
the disc 
$\bar D:=D(z,r_{1}(z))$
is $\frac{2}{13}$-rigid.  
 
\begin{remark}\label{realph} 
If we initialize Alg. \ref{algcmprsch} at a sufficiently large $y$, then the same argument 
 ensures that the disc $D(z,r_{1}(z))$ is $\eta$-isolated for any $\eta>4$  unless $\tilde r\le 3\epsilon/4$.
\end{remark}
 
This compression queries  
 evaluation oracle  twice  while performing  Schr{\"o}der's single iteration for  computing $z$ and otherwise runs at
the cost of performing Alg. \ref{algdblexpk}, estimated in Cor. \ref{corrcst}.

We decrease the latter cost for a large class 
of polynomials $p$ by choosing a larger tolerance $\epsilon$. Then we  yield desired 
compression if  the algorithm outputs $r$
exceeding $\epsilon/4$. Otherwise we  recursively repeat the computations for the tolerance recursively 
decreasing to $\epsilon/4$.
 
 \subsection{Compression of an isolated  disc}\label{secmprstris}  
 
If $m<d$ and  $i(D)\ge d^{\nu}$
for a sufficiently large constant $\nu$, then Eqn. (\ref{eqratio}) implies that the
$d-m$ external roots, lying outside the disc $D$, little affect  the proof of 
Thm. \ref{thqinch} 
and our whole study in the previous two subsections as long as we apply Schr{\"o}der's  iteration at a complex point $y$ lying far enough from both  disc $D$ and the circle $C(0,d^{\nu})$.

The MCD of a compact component can be  isolated but not strongly.  Then, however,  we 
 apply subdivision steps to that
 component and in order of  $\log(d)$  steps have it  broken or  strongly isolated  by virtue of Observation 
  \ref{obcmpisl}. Hence we can extend our study in the previous subsections.  $O(\log(d))$  subdivision steps for compact components  involve $O(m\log(d))$ 
 e/i tests. The factor $\log(d)$ is immaterial for the $\tilde O$ estimates of 
 our Main Theorems, but we  remove it next.
%$ and can compress isolated disc $D$ at a sufficiently low cost where $\epsilon=1/2^b$ and  $b=O(\log(d))$.
 
\subsection{From isolation to strong isolation}\label{sistostris} 
Let $s_1$ denote  the sum of all $m$ roots in $D$.
Clearly, the point
 $s_1/m$ lies in   
MCD and even in the convex hull of the root set $X(D)$.
 We compute approximation $c=s_1(b')/m$ of $s_1/m$ within $1/2^{b'}$ at the cost of $O(b')$ queries of 
an evaluation oracle
(see Thm. \ref{thpwrsm0}). For $b'$ of order $\log(d)$ we yield error bound
$1/d^{\nu}$ for a fixed positive constant 
$\nu$  of our choice at the cost of $O(\log(d))$ queries of 
an evaluation oracle \ref{thpwrsm0}).
 
By applying Alg. \ref{algdblexpk} we compute  approximation $\rho$ of $r_{d-m+1}(c)$ also within
$1/d^{\nu}$, and so
\begin{equation}\label{eqc_rh}
|c-s_1/m|\le 1/d^{\nu}~{\rm
and}~|\rho-r_{d-m+1}(c)|\le  1/d^{\nu}.
\end{equation}
The  rigidity $\eta=\eta(D(c,\rho))$ of the disc $D(c,\rho)$ is given by the value $\frac{1}{\rho}\max^*|x_i-x_j|$ where maximum 
 is over all pairs of roots lying in the disc $D(c,\rho)$. Now notice that 
\begin{equation}\label{eqrg}
 \max^*|x_i-x_j|>
 0.5~r_{d-m+1}(s_1/m).
\end{equation}
Indeed, because of invariance of the claim under map (\ref{eqshft}),
we can assume that $s_1=0$ and
$x_i<0$. Then $\Re (x_j)> 0$ for some $j$,
and (\ref{eqrg}) follows.

Furthermore,
$|r_{d-m+1}(s_1/m)-r_{d-m+1}(c)|\le |s_1/m-c|$; combine this bound with Eqn. (\ref{eqc_rh})
and obtain that
 $|r_{d-m+1}(s_1/m)-\rho|\le 2/d^{\nu}$.
 Together with (\ref{eqrg}) this estimate implies that
 $$\max^*|x_i-x_j|>0.5(\rho-2/d^{\nu}).$$

 If $\rho \ge 4/d^{\nu}$, then $\rho-2/d^{\nu}\ge \rho/2$, the disc $D(c,\rho)$ is 0.25-rigid, and compression is completed.
 
 If $\rho \le 4/d^{\nu}$, then the disc
 $D(c,\rho)$, 
 sharing with the unit disc $D(0,1))$ all its $m$ roots, is strongly 
 isolated, namely, is $d^{\nu}/4$-isolated;
  then we can apply  
  to it the compression  algorithms of the previous sections. 
 
%------------------------------------------------------------------------------  
%------------------------------------------------------------------------------

\section{Reduction of $m$-test to  e/i tests}\label{smvia1} 

To complete our compression algorithms we should supply a $\sigma$-soft $\ell$-test for the input of Alg. \ref{algdblexpk}, for which we can apply
the low cost Las Vegas $\ell$-test in Sec.  \ref{mdls}. 

In application to compression, however, we
can apply two alternative tests because we only need either (i) a $\sigma$-soft $m$-test for a sub-disc
$D(0,\rho)$  of the unit disc $D(0,1)$ containing $m$ roots where $\rho<1$ or alternatively (ii) e/i test for the annulus $A(0,\rho,1)$ of Sec. \ref{seiann}. 

{\em (i) A $\sigma$-soft $m$-test for a sub-disc.} Clearly, we can immediately reduce  $d$-test in the unit disc $D(0,1)$ for $p(x)$ to performing 1-test (that is, e/i test) for that disc and for  the reverse polynomial
 $p_{\rm rev}(x)$ of
(\ref{eqpolyrev}), but
next we  reduce $m$-test for any $m$, $1\le m\le d$, and for any disc $D$ to bounded number of e/i tests for sub-discs.  If at least one of the e/i tests fails, then 
 definitely there is a root in a narrow annulus about
       the boundary circle of an input disc, and then the internal disc of the annulus  would not contain $m$ roots.  Otherwise 
we obtain a root-free concentric annulus about the boundary circle of an input disc
 %(see Fig. 4) 
  and then  
 reduce $m$-test to  root-counting in an isolated disc, which we can perform at a dominated cost (see claim (ii) of  Prop. \ref{thtsts1}).
%of Thm. \ref{thcmplcmprab},  claim (a), or $m$-test of Sec. \ref{smplmtst}. 
  
% The test computes  a range $ [\rho_-,\rho_+]$ for the radius $r_{d-m+1}$  of the $m$th smallest root where $\rho_-<1<\rho_+$. By scaling the variable $x\mapsto x/\rho_-$
%  we can bracket the root radius in the range $[1,\rho_+/\rho_-]$. 
      
\begin{algorithm}\label{algcrtc1} {\em Reduction of $m$-test  to e/i tests.} 
 
%\begin{description} 

INPUT: a polynomial $p(x)$ of a degree $d$, two values $\bar\theta>1$ and $\bar\sigma>1$,
an integer  $m$ such that $1\le m\le d$ and $\#(D(0,\bar\sigma))\le m$,
and two algorithms $\mathbb {ALG}~0$
and $\mathbb {ALG}~1$, which for a black box polynomial perform   
 $\bar\sigma$-soft e/i test  for any disc and root-counting in an isolated disc, respectively.
 
INITIALIZATION (see Fig. \ref{figdetei}): Fix a positive $\rho$ such that  $\bar \sigma=O(\rho)$, say, $\rho=\bar \sigma/5$, and compute the integer 
\begin{equation}\label{eqv}
v= \lceil 2\pi/\phi\rceil
\end{equation}
 where 
$\phi$ is the angle defined by an arc of $C(0,1)$ with end points
at the distance $\rho$, and so $v=O(1)$ for $\bar\sigma=O(\rho)$. Then compute the values
  \begin{equation}\label{eqmurh0}
c_j=\exp(\phi_j\sqrt {-1}),~j=0,1,\dots,v,
\end{equation}
such that 
$|c_j-c_{j-1}|=\rho$ for $j=1,2,\dots,v$
%\begin{equation}\label{eqmurh0}
 and 
$\phi_{j}-\phi_{j-1}=\phi$.
%\end{equation
 \begin{figure}
    \centering
   \begin{tikzpicture}
   [scale=1.0,every node/.style={draw=black, circle}]
  \draw node[fill,inner sep=1pt] (0, 0) {};
  \draw [red] (0, 0) circle (2cm) ;
  \draw node [fill, label=right:{$c_1$},inner sep=1.5pt] at (2, 0) {};
  \draw (2, 0) circle (1.3cm);
  \draw node [fill, label=right:{$c_2$},inner sep=1.5pt] at (1.5775, 1.2294) {};
  \draw (1.5775, 1.2294) circle (1.3cm);
  \draw node [fill, label=right:{$c_3$},inner sep=1.5pt] at (0.4885, 1.9394) {};
  \draw (0.4885, 1.9394) circle (1.3cm);
  \draw node [fill, label=right:{$c_4$},inner sep=1.5pt] at (-0.8069, 1.8300) {};
  \draw (-0.8069, 1.8300) circle (1.3cm);
  \draw node [fill, label=right:{$c_5$},inner sep=1.5pt] at (-1.7614, 0.9474) {};
  \draw (-1.7614, 0.9474) circle (1.3cm);
  \draw node [fill, label=right:{$c_6$},inner sep=1.5pt] at (-1.9717, -0.3346) {};
  \draw (-1.9717, -0.3346) circle (1.3cm);
  \draw node [fill, label=right:{$c_7$},inner sep=1.5pt] at (-1.3489, -1.4766) {};
  \draw (-1.3489, -1.4766) circle (1.3cm);
  \draw node [fill, label=right:{$c_8$},inner sep=1.5pt] at (-0.1563, -1.9939) {};
  \draw (-0.1563, -1.9939) circle (1.3cm);
  \draw node [fill, label=right:{$c_9$},inner sep=1.5pt] at (1.1024, -1.6687) {};
  \draw (1.1024, -1.6687) circle (1.3cm);
  \draw node [fill, label=right:{$c_0$},inner sep=1.5pt] at (1.5775, -1.2294) {};
  \draw (1.5775, -1.2294) circle (1.3cm);
  
  \end{tikzpicture}

\caption{$v=9$, $|c_j-c_{j-1}|=\rho>|c_9-c_0|$
for $j=1,2,\dots,9$.}
\label{figdetei}
\end{figure}     

OUTPUT: Either an upper bound $r_{d-m+1}< 1-\rho\frac{\sqrt 3}{2}$
 or one of the two lower bounds 
  $r_{d-m+1}\ge 1-\bar \sigma\rho$ or
 $r_{d-m+1}> 1+(\sqrt 3-1)\rho$.
 
 COMPUTATIONS:   
 Apply 
  algorithm  $\mathbb {ALG}~0$ (soft e/i test) to the 
   discs 
 $D(c_j,\rho)$, $j=0,1,\dots,v$.
(a) Stop and certify that
 $r_{d-m+1}\ge 1-\bar \sigma\rho$,  that is, $\#(D(0,1-\bar \sigma\rho))<m$ for the open disc $(D(0,1-\bar \sigma \rho)$,
    unless 
  algorithm $\mathbb {ALG}~0$ outputs exclusion in all its $v+1$ applications.
  
 (b)  If it outputs exclusion $v+1$ times, then apply algorithm $\mathbb {ALG}~1$ (root-counter) 
 to the $\theta$-isolated disc $D(0,1)$
 for 
 \begin{equation}\label{eqqtht0}
  ~\theta:=\min\Big\{1+(\sqrt 3-1)\rho,\frac{1}{1-0.5\rho\sqrt 3}\Big\}.
\end{equation}
   Let 
$\bar s_0=\#(D(0,1))$ denotes its output  integer. Then conclude that \\
 (i) $r_{d-m+1}\le 1-\rho\frac{\sqrt 3}{2}$, and so
  $\#(D(0,1-\rho\frac{\sqrt 3}{2}))=m$ if $\bar s_0=m$,
   while \\
   (ii) $r_{d-m+1}\ge 1+(\sqrt 3-1)\rho$,  and so $\#(D(0,1+(\sqrt 3-1)\rho))<m$  for the open disc
   $D(0,1+(\sqrt 3-1)\rho)$
   if $\bar s_0<m$.

%\end{description} 
\end{algorithm}
  
  To prove
correctness of Alg. \ref{algcrtc1} we need the following result.
\begin{theorem}\label{thvrfrtcnt}
For the above values  $c_0,\dots, c_{v}$, 
  and $\rho$,
 the domain 
$\cup_{j=0}^{v-1}D(c_j,\rho)$ covers the annulus  
$A(0,1-0.5\rho\sqrt 3,1+(\sqrt 3-1)\rho)$.
\end{theorem}
\begin{proof}
 Let $c_{j-1}$ and $c_j$ be the  
centers  of two neighboring circles
$C(c_{j-1},\rho)$ and $C(c_j,\rho)$,
let $z_j$ and $z'_j$ be their intersection  points  such  that $|z_j|=|z_j'|-|z_j-z'_j|$
%.  Write $c_v:=c_0$, consider such %pairs $z_j$ and  $z_j'$ 
for $j=1,2,\dots, v$,
 observe that the values $|z_j-z_j'|$,
 $|z_j|$,  $|z'_j|$, and $|c_{j-1}-c_j|$  are invariant in $j$, and hence so is the annulus $A(0, |z_j|, |z_j'|)$  of width $|z_j-z_j'|$
 as well; notice that it lies in the domain
$\cup_{j=0}^{v-1}D(c_j,\rho)$. 

%Let us prove that $|z-z'|=\rho\sqrt 3$.
  
\begin{figure}\label{fig. 3}

\centering
\begin{tikzpicture}[scale=1.0,every node/.style={draw=black, circle}]
\draw (4, 0) arc (0:180:4);
\draw node (c1) [fill, label=right:{$c_{0}$},inner sep=1pt] at (0.3993, 3.9800) {};
\draw (0.3993, 3.9800) circle (2cm);
\draw node (c2) [fill, label=right:{$c_{1}$},inner sep=1pt] at (-1.5774, 3.6784) {};
\draw (-1.5774, 3.6784) circle (2cm);

\draw node (z) [fill, label=right:{$z$},inner sep=1pt] at (-0.3256, 2.1160) {};
\draw node (z') [fill, label=right:{$z'$},inner sep=1pt] at (-0.8545, 5.5398) {};

\draw (c1) -- (c2)
 (c1) -- (z)
 (c1) -- (z')
 (c2) -- (z)
 (c2) -- (z')
 (z) -- (z'); 

\end{tikzpicture}

\caption{ $|c_{1}-c_0|=|c_{0}-z|=|c_1-z|=|c_{0}-z'|=|c_1-z'|=\rho$.}\label{figz}
\end{figure}

Let us prove that $|z_j-z_{j}'|=\rho\sqrt 3$
for $j=1$ and hence also for $j=2,\dots,v$.

Write $z:=z_1$ and $z':=z_1'$ and observe (see Fig. \ref{figz}) that $|c_{1}-c_0|=|c_{0}-z|=|c_1-z|=|c_{0}-z'|=|c_1-z'|=\rho$, and so
 the points $c_0$, $c_{1}$, and $z$
are the vertices of an 
equilateral triangle with  a side length 
$\rho$. Hence $|z-z'|=\rho\sqrt 3$.
The circle $C(c_j,\rho)$ intersects the line 
segment $[z_j,z_j']$ at a point $y_j$
such that $|y_j-z_j|>\rho\sqrt 3/2$ and 
$|y_j-z_j'|>(\sqrt 3-1)\rho$ for $\rho<1$, and
 this implies the theorem if $|c_v-c_0|=\rho$.
 If, however, $|c_v-c_0|<\rho$ and if
 $C(c_{0},\rho)\cap C(c_v,\rho)= \{\bar z,\bar z\}$, then
 $|\bar z-\bar z'|>|z-z'|$, and the theorem follows in this case as well.
\end{proof}
\begin{corollary}\label{ocvrfrtcnt}
The disc $D(0,1)$ is 
$\theta$-isolated for 
 $\theta$ of (\ref{eqqtht0}) provided that 
the discs $D(c_j,\rho)$  contain no roots for  $j=0,1,\dots,v$.
\end{corollary} 

{\em Correctness of Alg. \ref{algcrtc1}.}
 (i)  If 
  algorithm $\mathbb {ALG}~1$ outputs inclusion in at least one of its applications, then one of the discs
   $D(c_j,\bar\sigma\rho)$ contains a root, and then 
   Alg. \ref{algcrtc1} correctly reports that   
    $r_{d-m+1}\ge 1-\sigma\bar \rho$. \\
 (ii) Otherwise Cor. \ref{ocvrfrtcnt} holds, and then
 correctness of Alg. \ref{algcrtc1} follows from correctness of
 $\mathbb {ALG}~1$.  

{\em Softness} is at most $\frac{1}{1-\bar\sigma \rho}$,   
$\frac{1}{1-0.5 \rho\sqrt 3}$, 
and $1+(\sqrt3-1)\rho$ in cases (a), (b,i), and (b,ii), respectively.
 
{\em Computational complexity.}
Alg. \ref{algcrtc1} invokes  algorithm $\mathbb {ALG}~0$ at most $v+1$ times for $v=O(1)$ of (\ref{eqv}). If $\mathbb {ALG}~0$ outputs exclusion $v+1$ times,   then  algorithm $\mathbb {ALG}~1$ is invoked and runs at a dominated cost.
  
  \begin{remark}\label{rerdctn2}
One can modify  Alg. \ref{algcrtc1}
as follows (see \cite{IP22}):
instead of fixing  a positive $\rho$ and then computing $v$
  of (\ref{eqv})
  fix  an integer $v>0$, apply   algorithm $\mathbb {ALG}~0$ at 
$v$ equally spaced points $c_j=\exp(\frac{2j\pi\sqrt {-1}}{v}),~j=0,1,\dots,v-1$, let $\phi=\frac{2\pi}{v}$,
and then define $\rho$.  We also arrive at this algorithm if we compute or guess this value $\rho$ and then apply   Alg. \ref{algcrtc1}
but 
 notice that $c_v=c_0$ for $c_j$ of (\ref{eqmurh0}) and only invoke algorithm $\mathbb {ALG}~0$ for 
$c_0,\dots,c_{v-1}$.
\end{remark}

\section{Counting e/i tests}\label{scntei} 

\subsection{An upper estimate}\label{suppest} 

The {\em upper estimate} $2m-2$ of  part (ii) of Sec. \ref{saccIV}  on the number of compression  steps is 
{\em sharp} -- reached in the case of the complete binary tree  $\mathbb T$.

Besides  compression steps our subdivision root-finders involve  e/i tests. Let us count them.
Each compression step is preceded by an e/i test, and so 
we may need to perform at least $2m-2$ e/i tests.  According to the main theorem of this section below, this lower bound  is  sharp up to a constant factor:   
 
\begin{theorem}\label{thcntssp}
Let subdivision iterations  
with compression be applied to a suspect square containing at most $m$ roots. Then they involve  $O(m)$ e/i tests overall.
\end{theorem} 

\subsection{The first lemma}\label{sfstlmm}  

We begin our proof 
of Thm. \ref{thcntssp}  with  
\begin{lemma}\label{eqnjc}
In every component $C$ made up of $\sigma(C)$
suspect squares we can choose $n(C)\ge \sigma(C)/5$ non-adjacent suspect squares.
\end{lemma}
\begin{proof}
Among  all leftmost squares in $C$ the uppermost one has at most four neighbors --
none on the left or directly above. Include this square into the  list of non-adjacency
squares and remove it and all its neighbors from $C$. Then
apply this recipe recursively until you remove all  $\sigma(C)$ suspect squares from $C$. Notice that at least 20$\%$ of them  were non-adjacent. 
\end{proof}

\subsection{The second lemma}\label{sscnlmm}
 
\begin{lemma}\label{lecmpn1} 
Fix two  integers $i\ge 0$ and $g\ge 3$, fix a value $\sigma$ of softness,   $1<\sigma<\sqrt 2$, and define  the separation  fraction $a:=2-\sigma\sqrt 2$ of Observation \ref{obcmp}.
 For all $j$ let $j$-th subdivision 
 step  process $v_j$ components made up of $w_j$ 
suspect squares, $n_j$ of which are non-adjacent, let $\Delta_j$ denote their side length, and let the $j$-th steps  perform  no compression 
for $j=i,i+1,\dots,i+g$. Then   
\begin{equation}\label{eqdltdcr}
w_{i+g}\ge (n_i-v_{i})a2^{g-0.5}~{\rm if}~\Delta_i\ge 2^g\Delta_{i+g}.
\end{equation}

\end{lemma}

\begin{proof} At the $i
$th step a pair of non-adjacent suspect squares of the same component is separated by at least the  gap $a\Delta_i$, and  there are at least $n_i-v_i$ such gaps
with overall length 
$(n_i-v_i)a\Delta_i$.
Neither of these gaps disappears at the $(i+g)
$th step, and so they
must be covered by $w_{i+g}$ 
suspect squares, each of diameter $\sqrt 2~\Delta_{i+g}=\Delta_i/2^{g-0.5}$
under (\ref{eqdltdcr}). Hence 
$(n_i-v_i)a\Delta_i\le w_{i+g}
\Delta_i/2^{g-0.5}$.
\end{proof} 

Next we extend Lemma \ref{lecmpn1} by allowing  compression steps. We only need to extend the equation
$2\Delta_{j+1}=\Delta_{j}$  to the case where the
$j$th subdivision step for $i\le j\le i+g$  
 performs non-final compression, each followed   
 with subdivision steps applied to a single  
 suspect square with side length  at most  $4\Delta_{j}$. 
 
The next two subdivision steps perform only five e/i tests, making up    at most $5m -10$  e/i tests  performed at all pairs of subdivision steps  following
   all (at most $m-2$) non-final compression steps.
   
  Now we only need to bound by $O(m)$ the number of  e/i tests in the  remaining subdivision steps,  not interrupted with compression steps. 
  
After a compression step we resume subdivision process  with a suspect square whose side length 
   bound decreases in the   next two subdivision steps  from at most
     $4\Delta_{j}$ to   at most  $\Delta_{j}$. 
 
   Hence we can apply 
 to it Lemma \ref{lecmpn1}, because now it 
   accesses  the same
number $n_j$ of non-adjacent suspect squares and an increased number $\sigma_j$ of all suspect squares (cf. (\ref{eqdltdcr})).
 
\subsection{Counting e/i tests at partition steps}\label{scntgeiprt}
 
 Let $h$ denote  the height of our component   tree  and count suspect squares 
processed in subdivision steps that partition components, temporarily ignoring the other steps.  

Clearly,  bounds (\ref{eqdltdcr}) 
still hold  and can only be strengthened. 
Notice that $v_i\ge 1$ for all $i$, 
sum $v_{i+g}$ as well as bounds (\ref{eqdltdcr})
 for a fixed $g>0$  and $i=0,1,\dots,h-g$, and obtain

\begin{equation}\label{eqsgmnv}\sum_{i=0}^{h-g} n_i-\lambda \sum_{i=0}^{h-g}\sigma_{i+g}\le \sum_{i=0}^{h-g}v_{i+g}~{\rm for}~\lambda:=\frac{\sqrt 2}{a2^g}.
\end{equation} 
Notice that  
$$\sum_{i=0}^{h-g}v_{i+g}\le \sum_{i=0}^hv_i-\sum_{i=0}^{g-1}v_i<2m-g,$$
Combine the above estimate and the bounds  $n_i\le m$ and
(cf. Lemma \ref{eqnjc})
$w_i\le 5n_i,$  
for all $i$, write $n':=\sum_{i=0}^{h} n_i$,  
obtain 

$$\sum_{i=0}^{h-g} n_i\le n'-\sum_{i=h-g+1}^hn_i\le 
n'-mg,~
\sum_{i=0}^{h-g}w_{i+g}\le \sum_{i=0}^{h}w_i\le 5n',$$ 

and so 
$$(1-5\lambda)n'\le 2m-1-g+ mg~{\rm for}~\lambda~{\rm of~(\ref{eqsgmnv})}.$$

   Obtain that $$0.5n'\le 2m-1-g+ mg~{\rm for}~g=1+\Big\lceil\log_2\Big(\frac{5\sqrt 2}{a}\Big)\Big\rceil.$$
   
    Hence
    
\begin{equation}\label{eqcmpnt1}
 n'\le 4m-2-2g+2mg,~{\rm and~so}~ n'=O(m)~{\rm for}~
g=O(1).
\end{equation}

%------------------------------------------------------------------------------ 

\subsection{Counting all e/i steps}\label{scntall}
 
Next  
consider  a non-compact component, count all suspect squares processed in all subdivision steps applied  until it is partitioned, and then prove in Lemma \ref{lecmpnt1}
that up to a constant factor the overall number of these suspect squares is bounded by 
their overall number at all partition steps. Such a result will extend 
to all subdivision steps the bound $n'=O(m)$ of (\ref{eqcmpnt1}), so far  restricted to partition steps. Together with Lemma \ref{eqnjc} this will complete our proof of Thm. \ref{thcntssp}.

%------------------------------------------------------------------------------
     
\begin{lemma}\label{lecmpnt1}
Let  $k+1$ successive subdivision steps
with $\sigma$-soft e/i tests  be applied  to a non-compact component for $\sigma$ of our choice in the range $1<\sigma<\sqrt 2$.
Let the $j$-th step output   $n_j$ non-adjacent suspect squares for $j=0,1,\dots,k$.
   Then
%\begin{equation}\label{eqngj} 
$\sum_{j=0}^{k-1}n_{j}\le n_k\frac{10\sqrt 2}{a}$ for the separation fraction $a:=2-\sigma\sqrt 2$ (cf. Observation 
\ref{obcmp}.)
\end{lemma}  

%------------------------------------------------------------------------------

\begin{proof}
Substitute $g=k-i$
into (\ref{eqcmpnt1})
 and obtain
 $w_k\ge (n_i-v_i)a2^{k-i-0.5}$, for $i=0,1,\dots,k-1$.
 
  Recall that $w_j\le 5 n_j $ (cf. Lemma \ref{eqnjc})
and that in our case $v_i=1$ and $n_i\ge 2$ and obtain that  $n_i-v_i\ge n_i/2$ for all $i$.

 Hence $5n_k\ge n_ia2^{k-i-1.5}$ for all $i$,
 and so $$\sum_{i=0}^{k-1}n_i\le n_k\frac{10\sqrt 2}{a}\sum_{j=1}^{k}2^{-j}<
 n_k\frac{10\sqrt 2}{a}.$$ 
  \end{proof} 
 
%------------------------------------------------------------------------------
    
\begin{remark}\label{recmpnt1}
The overhead constants hidden in 
 the "O" 
notation of our estimates are fairly large but overly pessimistic.
E.g., (i)  the roots $x$ and $y$   
involved in the proof of Lemma \ref{lecmpn1} are linked by a path of suspect squares, but we deduce the estimates of the lemma by using only a part of this path, namely, the part that covers the gaps between non-adjacent suspect squares. 
 (ii) Our proof of that lemma 
 covers the worst case where a long chains 
 between the roots $x$ and $y$ can consist of up to $4m$ suspect squares, but for typical inputs such chains are much shorter  and can consist of just a few suspect squares.  (iii) In the proof of Lemma \ref{lecmpnt1} 
 we assume that $n_i-1\ge n_i/2$;  this holds for any $n_i\ge 2$, but the  bounds are strengthened
  as $n_i$  increases.  (iv) In a component made up of five suspect squares at least two are non-adjacent,
 and so the bound of Lemma \ref{eqnjc} can be strengthened at least in this case.
\end{remark}

%\section{Overall complexity of our root-finders and algorithmic extras}\label{sovrllcmplextr}

\section{Overall complexity of our root-finders}\label{sovrllcmpl}

We reduced our root-finding in a  square with $m$ roots in its  close neighborhood 
to $O(m)$ e/i tests in addition to at most $2m-2$
compression steps. We  
 reduced compression first of an isolated and then of strongly isolated  disc to (i) counting roots in an isolated disc ($O(\log(d))$  evaluation queries)
 (ii) application of Schr{\"o}der's single iteration (two evaluation queries),  and (iii) approximation of the root radius $r_{d-m+1}(z,p)$, that is, 
performing $O(\log(b))$  $m$-tests.
 
 In Sec. \ref{smvia1}   we reduced an $m$-test  to $O(1)$
 e/i tests, and so the overall complexity of subdivision iterations   is dominated by the complexity of performing $O(m\log(b))$ e/i tests and  $O(m\log(d))$
 queries of an evaluation oracle. By combining the latter estimates   with those  of Sec. \ref{sei} for an e/i test, we complete the proof of  Main Theorems 1 -- 4. 
  
\section{Algorithmic extras}\label{sextr}  
  
 Next we
 accelerate our root-finders for a large class of inputs.

1. To compress the isolated unit disc $D:=D(0,1)$ containing a small number $m$ of roots $z_1,\dots,z_m$,  compute
$s_1,\dots,s_m$ the first $m$ powers sums of the roots lying in $D$, then recover 
the coefficients of the factor $f(x)=\prod_{j=1}^m(x-z_j)$ from {\em Newton's identities}
(cf. \cite[Eqn. (2.5.4)]{P01}),  approximate its zeros, and apply a  black box root-finder to the polynomial
$p(x)/f(x)$ represented by the pair of polynomials $p(x)$ and $f(x)$. 

2. In the case of a  large integer $m$ the latter algorithm suffers from swelling the coefficients of the factor $f(x)$. For $m=d$ the algorithm
\cite{RSS24} empirically approximates all  $m$
roots at a rather low cost, which provides more than just  compression of $D$.
Can we extend this approach  to compression in the case where $m<d$
and the disc $D$ is sufficiently strongly
isolated? 

This is a research challenge, but we propose a similar low cost heuristic recipe which, as we conjecture, accelerates our root-finding for a large class of inputs.

Let  the disc $D=D(0,1)$ be strongly isolated and let
$i(D)\ge d^{2\nu}$ for a reasonably large $\nu$,
and then instead of Newton's  apply  Schr{\"o}der's single iteration (\ref{eqSCHR}) at sufficiently many, say 100,
equally spaced points $y$  of the circle $C:=C(0,d^{\nu})$.
Let $Z$  denote the  set of images  $z=y-m{\rm NR}(y)$. 
By extending  our study in Sec.
\ref{secmprstris},  deduce that  all these images lie in the $\alpha\Delta$-neighborhood of  the root set $X(D)$ for $\Delta:=\Delta(D)$
and $\alpha$ fast converging to 1 as $\nu$ grows large and 
hence that $\Delta(Z)\le (2\alpha+1)\Delta$.

We may run into worst case inputs where $Z$ is a singleton and $\Delta(Z)=0$, but we conjecture that  under 
a random rotation of the set of points $y$,
the ratio $\Delta(Z)/\Delta$ exceeds a positive constant whp, and then we can narrow the initial range  of BoL in Sec. \ref{sextrrk} from $[-b,0]$ to $[\log_2(\Delta(Z)),0]$
at a low cost of performing Schr{\"o}der's iteration (\ref{eqSCHR}) at reasonably  many, say 100, points. 

%------------------------------------------------------------------------------- 
\medskip
\medskip

\bigskip
%------------------------------------------------------------------------------

%{\Large \bf APPENDIX}
\medskip
 
%\appendix
   
%------------------------------------------------------------------------------
\medskip

%-----------------------------------------------------------------------------

%\clearpage     
%------------------------------------------------------------------------------

\noindent {\bf \Large Part V: Factorization, Root Isolation, Functional Iterations, and Root Radii Estimation} 
%\medskip
%\appendix
\medskip 

We organize this part of the paper as follows.
 In Secs. \ref{scprtf} and \ref{4.15} we cover some results by Sch{\"o}nhage
\cite{S82} on polynomial factorization and its links to root-finding and root-isolation, and our  lower and upper bit-complexity estimates for these problems. In Sec. \ref{snopth} we recall 
root-finders by means of
 Newton's, Weierstrass's
 \cite{W03}
(aka Durand-Kerner's \cite{D60,K66}), and Ehrlich's  of  1967 \cite{E67}, aka Aberth's  of 1973  \cite{A73}
iterations, accelerate them by means of incorporation of FMM, and initialize Newton's iterations for a black box input polynomial.
In Sec. \ref{sexclexc2} 
 we recall fast approximation of all 
 $d$ root radii  where the coefficients of $p$ are given.
   
\section{Polynomial Factorization,  Root-finding and Root  Isolation: \\Linking three Problems and their bit operation complexity}\label{scprtf} 

\subsection{Linking Factorization and Root-finding: upper bounds
on the bit operation  complexity}\label{sfctrrlnks}  
  
%------------------------------------------------------------------------------
  
Hereafter write $|u|=\sum_{i=0}^d|u_i|$ for a polynomial 
$u(x)=\sum_{i=0}^du_ix^i$.

The root-finders of \cite{S82,P95,P02} solve Problems 0 and 0$^*$ by extending the  solution of 
 the following problem of independent 
importance:

{\bf Problem F. Approximate Polynomial Factorization:}\footnote{The solution of  Problem F can be also extended to isolation of the zeros of a square-free polynomial with integer coefficients  (see  our Remark \ref{rertisl}) and has various other applications to modern computations, listed in Sec. \ref{prbs}. For a  polynomial of degree $d$  
the known sharp upper and lower bounds on the input precision and number of bit operations for root-finding exceed 
    those for numerical factorization by a factor of $d$.} Given a  real $b$ and $d+1$  coefficients 
$p_0,p_1,\dots,p_d$  of (\ref{eqpoly}),
 compute
 $2d$ complex numbers
$u_j,v_j$ for $j=1,\dots,d$
such that
\begin{equation}\label{eqfctrp} 
|p-\prod_{j=1}^d(u_jx-v_j)|\le \epsilon |p|~
{\rm for}~\epsilon=1/2^{b}.
\end{equation}
    
Hereafter $\mathbb B_0(b,d)$  and
 $\mathbb B_F(b,d)$ denote the optimal number of bit operations required for the solution of Problems 0 and F, respectively, for an error bound $\epsilon=1/2^b$,
and next we  specify a class of polynomials whose {\em  zeros  make up no large
 clusters}
(cf. \cite{BCSS98,CS99}).
%,M21,IM23}  
 
\begin{definition}\label{defclstr} {\bf  Polynomials with bounded root cluster sizes.} The zero set of $p$
consists of  
clusters, each made up of a constant number of zeros and  having a diameter at most $1/2^{3-b/d}$; furthermore, these clusters are 
pairwise separated 
by the distances at least  $1/d^{\tilde O(1)}$.
\end{definition} 

In  \cite{P95,P02} we    
proved the following 
near-optimal bound for Problem F.

\begin{theorem}\label{thbscfct}
   $\mathbb B_F(b,d)=\tilde O((b+d)d)$.
\end{theorem}

In Sec. \ref{sfctrrapp1}  we reduce Problem 0 to Problem F and vice versa by applying Sch{\"o}nhage's results of \cite[Cor. 4.3 and Sec. 19]{S82}:

\begin{theorem}\label{thprb1toprb2}
(i) $\mathbb   B_0(b,d)=\tilde O(\mathbb  B_F(bd,d))$ for any polynomial $p(x)$ of degree $d$, (ii)  $\mathbb   B_F(b,d)=\tilde O(\mathbb  B_0(b+d+\log(d),d))$
for any polynomial $p(x)$  of degree $d$, and
(iii) $\mathbb   B_0(b,d)=\tilde O(\mathbb  B_F2(b,d))$ for polynomials of Def. \ref {defclstr},  with bounded cluster sizes.  
\end{theorem}

By combining 
 Thms. \ref{thbscfct} 
 and \ref{thprb1toprb2} obtain the following estimates.

\begin{theorem}\label{thblncs1}
 One can solve Problem 0
by using 
(i) $\tilde O((b+d)d^2)$
bit operations for any polynomial $p$ of (\ref{eqpoly}) and
(ii)  $\tilde O((b+d)d)$ bit operations for a polynomial $p$
with bounded cluster sizes.
\end{theorem}

\begin{corollary}\label{thblncs1m}
 One can solve Problem 0 by using $\tilde O((b+d)d+(b+m)m^2)$ bit operation for $m$ of Def. \ref{defepsbt} of Sec. 
\ref{scmplests}. The bound turns into 
$\tilde O((b+d)d)$ if 
$m^2=\tilde O(d)$. 
\end{corollary}

\begin{proof}
Proceed in two steps: \\
 (i)  by using $\tilde O((b+d)d)$ bit operations solve Problem F and then \\
 (ii) apply the algorithm that supports
Thm. \ref{thblncs1}  to the  $m$th degree factor of the polynomial  
$\prod_{j=1}^d(u_jx-v_j)$
of (\ref{eqfctrp}) that shares with this polynomial all its zeros lying in the  disc $D(1+\beta)$ for positive $\beta$ of Def. \ref{defepsbt}.\footnote{We can slightly speed up stage (i) because at that stage we actually only need to approximate factorization  of the factor  $f$ of degree $m$ of $p$ that shares with $p$ its zeros in $D$.}
\end{proof}

\subsection{Linking Factorization and Root-finding: lower bounds
on the bit operation  complexity}\label{slwrbnds}

Readily verify the following information lower bounds.

\begin{observation}\label{obbd0}
To solve Problem 0 or F one must access 
$(d+1)b$ bits of input coefficients and  perform at least $0.5(d+1)b$ bit operations -- at most one operation for two input bits.
\end{observation}

 Next we deduce a
stronger lower bound for  
Problem 0.  

 \begin{observation}\label{obbd}
 Let $p(x)=x^d +p_{d-j}x^{d-j}$ for an integer 
 $j$, $1<j<d$, and 
 complex  $p_{d-j}$ such that $|p_{d-j}|=1/2^{bj}$.
 Then the polynomial $p(x)$   has the $j$ zeros $\zeta^g2^{-b}$ for $g=0,1,\dots,j-1$, which vanish in the transition to $x^d$.
   \end{observation}
   Observation \ref{obbd} implies that one must 
access the coefficient $p_{d-j}$ within 
 $2^{-jb}$ to   
distinguish the above $j$ zeros of $p(x)$ from the value 0 or from the zeros of the
 cluster  of $j$ zeros
 of the polynomials obtained from $x^d$ by means of infinitesimal perturbation  
of the coefficient $p_{d-j}$. Sum these bounds for 
$j=1,\dots,d$ and obtain  

  \begin{corollary}\label{cobd}
   One must access at least
   %\begin{equation}\label{eqbd} 
 $0.5(d+1)db$ 
%\end{equation}  
   bits of the coefficients of a monic polynomial $p$ and  perform 
at least   $0.25(d+1)db$ bit operations
    to approximate even  its single zero within $2^{-b}$ if no restriction is imposed on the number of roots in an input complex domain $\mathbb D$ and its $\beta$-neighborhood. 
   \end{corollary}
 
The  lower bounds of this subsection imply that the algorithm of \cite{P95,P02}  
 and its extension supporting Thm. \ref{thblncs1m} solve  Problems 0 for $m=d$  and F by using optimal number  of bit operations up to polylogarithmic factors.
 
Let us extend this study to Problem 0 for $m<d$.
In this case  isolate
 from one another the two sets made up of $m$ and $d-m$ roots. 

 \begin{observation}\label{obbdm}
 Let $p(x)=(x^m +p_{d-j}x^{m-j})(x-z_1)^{d-m}$ for an integer 
 $j$, $1<j<m$, 
 complex  $p_{d-j}$ such that $|p_{d-j}|=1/2^{bj}$, and sufficiently large $|z_1|$.
 Then the polynomial $p(x)$   has the $j$ zeros $\zeta^g2^{-b}$ for $g=0,1,\dots,j-1$, which vanish in the transition to $p(x)=x^m(x-z_1)^{d-m}$.
\end{observation}

Now we extend  Cor. \ref{cobd} as follows.

  \begin{corollary}\label{cobdm}
  For an integer $m$, $1\le m\le d$,
   one must access at least
   %\begin{equation}\label{eqbd} 
 $0.5(m+1)mb+db$ 
%\end{equation}  
   bits of the coefficients of a monic polynomial $p$ and  perform 
at least   $0.25(m+1)mb+0.5db$ bit operations
    to approximate within $2^{-b}$  even  its single zero lying in the $\beta$-neighborhood of a complex domain $\mathbb  D$ for a fixed constant $\beta>0$ if this neighborhood contains at most $m$  roots. 
\end{corollary}
   
\begin{proof}
Obtain the lower bounds
$0.5(m+1)mb$ and 
$0.25(m+1)mb$ as for
Cor. \ref{cobd}. Then notice that a $\delta$-perturbation of any trailing coefficient $p_i$ of $p(x)$ for $i\le d-m$ can move the roots lying in the $\beta$-neighborhood of  $\mathbb  D$ by order of $\delta$.
\end{proof}

 Our Main Theorem 3 implies that these lower bounds 
are near-optimal for $m$ of order $d$.

%------------------------------------------------------------------------------

\subsection{Linking  Factorization  and  Root-finding: proof of Thm. \ref{thprb1toprb2}}\label{sfctrrapp1}  
  
%------------------------------------------------------------------------------

Next we prove Thm. \ref{thprb1toprb2}, by applying the results of \cite[Cor. 4.3 and Sec. 19]{S82}.

Recall that
$\mathbb B_0(b,d)$ and
 $\mathbb B_F(b,d)$ denote the optimal number of bit operations for the solution of Problems 0 and F, respectively, for an error bound $\epsilon=1/2^b$, and furthermore, recall the lower bounds  
\begin{equation}\label{eqlbnds} 
 \mathbb B_0(b,d)\ge 0.5(d+1)b~{\rm and}~
 \mathbb B_F(b,d) \ge 0.25(d+1)db
\end{equation} 
  of 
 \cite{P95,P02} and the following result (cf. \cite[Thm. 19.1]{S82}).
 \begin{theorem}\label{thfrmfctr}
 Let $r_1=\max_{j=1}^d |z_j|\le 1$.
 Then  a solution of Problem 0
 within an error bound
 $1/2^b$ can be recovered at a dominated bit operation cost from the solution of   Problem F within an error bound $1/2^{bd+2}$, and so 
$\mathbb B_0(b,d)\le
\mathbb B_F(bd+2,d)$. 
 \end{theorem}
 
Based on \cite[Cor. 4.3]{S82} we can also readily relax the assumption  that $p(x)$ is monic.

Furthermore, in view of (\ref{eqlbnds}), relaxing the bound $r_1\le 1$ would little affect the latter estimate because we can  apply map (\ref{eqshft}), which changes the error bound of Problem 0 by at most $\log_2(\max\{\rho,1/\rho\})$, for  $\rho\ge 1/2^b$. 
 
\cite[Thm. 2.7]{S85}, which we reproduce in Sec. \ref{sfctrrapp} as Thm. \ref{thfrmfctr1},
shows an alternative way
to relaxing both assumptions that $r_1\le 1$ and  
$p(x)$ is monic.  
  
  Sch{\"o}nhage  
 in \cite[Sec. 19]{S82}
has greatly strengthened the estimate of Thm. \ref{thfrmfctr} unless 
 ``the zeros of $p$ are clustered too much".
  Namely, he proved
    (see \cite[Eqn. (19.8)]{S82}) the following result.
   \begin{lemma}\label{lefrmfctr0}
  Assume that 
 the polynomial $p(x)$
is monic,  the values $u_j=1$ for all $j$ and complex $v_1,\dots,v_d$ satisfy   
bound (\ref{eqfctrp}), and  $|z_j-v_j|\le r:=2^{2-b/d}$ for all $j$. Furthermore, write $\Delta_{j,1}:=|v_j-v_1|$ for $j=2,3,\dots,d$ and suppose that $1.25 \Delta_{j,1}\le  r$ for $j=2,3,\dots,d$
and  $\Delta_1:=\min_{j}\Delta_{j,1}> r$.
Then $|z_1-v_1|\le 2^{2-b/m}/|\Delta_1-r|^{\frac{d-m}{m}}$.
\end{lemma}

If  $1/|\Delta-r|^{\frac{d-m}{m}}=2^{\tilde O(d)}$, 
 then  Lemma \ref{lefrmfctr0} implies that $|z_1-v_1|\le 1/2^{b/m-\tilde O(d)}$,
 and we can similarly  estimate $
 |v_j-z_j|$ for all $j=1,2,\dots,d$.
 
  By extending these  estimates we strengthen  
  Thm. \ref{thfrmfctr} as follows.
 \begin{corollary}\label{cofrmfctr0}
 Under bounding cluster sizes  of Def. \ref{defclstr}, it holds that
$|z_j-v_j|\le 1/2^{b/m-\tilde O(d)},~j=1,2,\dots,d$. Hence if $m=O(1)$ is a  constant, then
 a solution of Problem 0
 within an error bound   $1/2^{b}$ can be recovered at a dominated bit operation cost from the solution of 
  Problem F with a precision $1/2^{\bar b}$
  for a sufficiently large $\bar b=\tilde O(b+d)$, and so  $\mathbb B_0(b,d)\le
 \mathbb B_F(\tilde O(b+d),d)$.
 \end{corollary}
 
%------------------------------------------------------------------------------

For converse reduction of
 Problem F  to Problem 0   recall 
 \cite[Cor. 4.3]{S82}
  in the case of $k=2$, covering the product of two polynomials:

%------------------------------------------------------------------------------
   
  \begin{theorem}\label{thfctrnrms}
Let $p=gh$ for three polynomials $p$ of (\ref{eqpoly}), $g$, and $h$. Then $||g||_1||h||_1\le 2^{d-1}||p||_1$.
\end{theorem}

Now consider the products
$p=gh$ for $g=x-z_j$, for 
$j=1,\dots,d$. For a fixed $j$, an error of at most $\epsilon/2^d$ for  the 
root $z_j$
contributes at most $\epsilon ||h||_1$
to overall factorization  error of Problem F. 
Therefore,
$||h||_1\le 2^{d-1} ||p||_1/||x-z_j||_1\le
2^{d-1} ||p||_1$ by virtue of Thm. 
\ref{thfctrnrms}.
 Hence the  approximation errors of all the $d$ roots  
together
contribute at most
$2^{d-1}d\epsilon$ to
the relative error 
of the solution of Problem F
(ignoring dominated impact of higher order terms), and we arrive at  
\begin{corollary}\label{co2to1}
If  $b=O(b-d-\log(d))$,
 then Problem F for an error bound $1/2^b$ can be reduced at a dominated bit operation cost to Problem 0 for an error bound $1/2^{\widehat b}$
 and a sufficiently large $\widehat b=O(b+d+\log(d))$; 
hence and so  $\mathbb B_F(b,d)=O(\mathbb B_0(b,d))$. 
\end{corollary}

It follows that the complexity estimate of  Thm. \ref{thblntm}   can be  also applied to the
 solution of Problem F of factorization  provided that $b=O(b-d-\log(d))$.   
    Then again, the bit operation complexity of this solution of  Problem F is {\em optimal} (up to polylogarithmic factors)
 under the random root model provided that  $d=\tilde O(b)$  (cf. Observation   \ref{obbd0}).

%------------------------------------------------------------------------------            

\section{Recursive Factorization, its bit complexity,  and Root Isolation} \label{4.15}

In this section we briefly recall some  algorithms and complexity estimates from \cite{S82,S85}.
We first recursively decompose a polynomial into the product of linear factors and then extend the factorization to approximation and isolation of the roots. 

%-------------------------------------------------------------------------

\subsection{Auxiliary norm bounds}
\label{4.2}

Recall two simple lemmas. 
%!!!!

\begin{lemma}
  \label{pro8.2.1}
  It holds that $| u  +   v | \le | u |+ |
v |$  and $|u   v | \le |u|~|v|$
for any pair of polynomials $u=u(x)$ and $v=v(x)$.
\end{lemma}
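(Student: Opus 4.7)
The plan is to unfold both inequalities directly from the definition of the polynomial norm $|u|=\sum_i|u_i|$ given in Section~\ref{stwoprs}, using only the triangle inequality and multiplicativity of the modulus on $\mathbb{C}$. Nothing deep is happening: this is essentially the statement that the coefficient $\ell_1$-norm is submultiplicative, and the work will amount to writing sums in the right order.

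For the first inequality, I would write $u(x)=\sum_i u_i x^i$ and $v(x)=\sum_i v_i x^i$, so that $u+v = \sum_i (u_i+v_i)x^i$. Then by definition and the scalar triangle inequality,
\[
|u+v| = \sum_i |u_i+v_i| \le \sum_i(|u_i|+|v_i|) = |u|+|v|.
\]
This takes one line and requires no care beyond applying $|a+b|\le|a|+|b|$ coefficient-wise.

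For the multiplicative bound, I would expand the Cauchy product: $uv = \sum_k c_k x^k$ with $c_k=\sum_{i+j=k} u_i v_j$. Applying the triangle inequality to each $c_k$ and then reindexing the double sum yields
\[
|uv|=\sum_k\Bigl|\sum_{i+j=k}u_i v_j\Bigr|\le\sum_k\sum_{i+j=k}|u_i|\,|v_j|=\sum_i|u_i|\sum_j|v_j|=|u|\,|v|.
\]
The only subtle point — and it is barely subtle — is justifying the interchange that turns a sum indexed by $k$ with inner constraint $i+j=k$ into a product of two unconstrained sums; this is just Fubini for finite sums, since $u$ and $v$ have finitely many nonzero coefficients. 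There is no genuine obstacle here; the hardest part is really choosing notation that keeps the indices tidy.
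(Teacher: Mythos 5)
Your proof is correct, and it is exactly the standard argument one would expect here: the paper states this lemma without proof (it is introduced as one of "two simple lemmas"), and your coefficient-wise triangle inequality for the sum together with the Cauchy-product expansion and reindexing for the product is the canonical way to verify submultiplicativity of the coefficient $\ell_1$-norm. Nothing is missing; the finite-sum interchange you flag is indeed harmless since both polynomials have finitely many nonzero coefficients.
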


\begin{lemma}\label{pro8.2.2}
Let $ u= \sum_{i=0} ^{d}u_i x^i$ and $| u
|_2=(\sum_{i=0}^{d}|u_i|^2)^{\frac{1}{2}}$. Then $|u|_2 \le |u| $.
\end{lemma}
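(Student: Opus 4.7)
The plan is straightforward: this is the standard inequality $\|\cdot\|_2 \le \|\cdot\|_1$ for the coefficient vector, and the natural route is to square both sides and expand.

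First I would square the right-hand side and expand:
\[
|u|^2 = \Big(\sum_{i=0}^d |u_i|\Big)^2 = \sum_{i=0}^d |u_i|^2 + 2\sum_{0 \le i < j \le d} |u_i|\,|u_j|.
\]
The first sum is exactly $|u|_2^2$, while the second sum is non-negative since each term $|u_i|\,|u_j|$ is a product of non-negative reals. Hence
\[
|u|^2 \ge |u|_2^2,
\]
and taking non-negative square roots yields $|u|_2 \le |u|$, as claimed.

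There is essentially no obstacle here; the only thing to check is that both $|u|$ and $|u|_2$ are non-negative so that squaring is monotone and square-rooting is legitimate, which is immediate from the definitions. An alternative one-line argument would invoke the general fact that $\|a\|_2 \le \|a\|_1$ on $\mathbb{C}^{d+1}$, but the direct expansion above is the shortest self-contained proof and fits the style of the paper.
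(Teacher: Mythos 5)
Your proof is correct. The paper states Lemma~\ref{pro8.2.2} without giving a proof (it is introduced as a ``simple lemma''), and your argument --- squaring both sides, expanding $|u|^2 = |u|_2^2 + 2\sum_{i<j}|u_i||u_j| \ge |u|_2^2$, and taking square roots --- is exactly the standard justification the authors are implicitly relying on.
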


The following lemma relates the norms
of a polynomial \index{polynomial!norm}
and its factors.

\begin{lemma}\label{pro8.2.6}
  If
 $p= p (x) = \prod_{j=1}^k f_j$ 
for polynomials $f_1,\dots,f_k$ and  
$\deg  p \le d$, then 
$\prod_{j=1}^k | f_j  |  \leq 2
^{d} | p |_2 \le 2^d | p |. $
\end{lemma}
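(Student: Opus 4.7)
The plan is to route the proof through the \emph{Mahler measure}
$$M(u) := |u_{\deg u}| \prod_{j=1}^{\deg u} \max(1, |y_j|),$$
where $y_1, \dots, y_{\deg u}$ are the roots of $u$. This quantity is the standard bridge between coefficient norms and multiplicative factorization data, because it enjoys exact multiplicativity $M(uv) = M(u) M(v)$ (immediate from the product formula, since the roots of $uv$ are the roots of $u$ together with those of $v$ and the leading coefficient factors).

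The two auxiliary ingredients I would establish are: first, the upper bound $|u| \le 2^{\deg u} M(u)$ for every nonzero polynomial $u$; second, the lower bound $M(u) \le |u|_2$. For the first I would expand $u = u_{\deg u} \prod_j (x - y_j)$, write each coefficient as an elementary symmetric function of the roots, bound it termwise by $\binom{\deg u}{k} |u_{\deg u}| \prod_j \max(1, |y_j|)$, sum over $k$, and use $\sum_k \binom{\deg u}{k} = 2^{\deg u}$. For the second I would invoke Jensen's formula applied to $\log|u(e^{i\theta})|$ on the unit circle, which yields $\log M(u) = \frac{1}{2\pi} \int_0^{2\pi} \log |u(e^{i\theta})|\, d\theta$; then the AM--GM (or Jensen) inequality and Parseval's identity $\frac{1}{2\pi} \int_0^{2\pi} |u(e^{i\theta})|^2 d\theta = |u|_2^2$ give $M(u) \le |u|_2$.

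With these in hand, the lemma falls out in one line. Letting $d_j := \deg f_j$, so that $\sum_j d_j = \deg p \le d$, I chain the bounds:
$$\prod_{j=1}^k |f_j| \;\le\; \prod_{j=1}^k 2^{d_j} M(f_j) \;=\; 2^{\sum_j d_j}\, M\!\Big(\prod_{j=1}^k f_j\Big) \;=\; 2^{\deg p}\, M(p) \;\le\; 2^d\, |p|_2,$$
using the coefficient bound factor by factor, then multiplicativity of $M$, then the upper bound $\deg p \le d$, and finally $M(p) \le |p|_2$. The concluding inequality $|p|_2 \le |p|$ is precisely Lemma \ref{pro8.2.2}.

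The main obstacle is the Jensen/Mahler inequality $M(u) \le |u|_2$: it is the only step that is not purely combinatorial, requiring either the Jensen formula on the unit disc or, alternatively, an inductive argument that replaces each root $y_j$ with $|y_j|^{\pm 1}$ appropriately without increasing $|u|_2$ (the Landau-style symmetrization argument). Everything else is bookkeeping — the factor of $2^d$ is slack enough that the coefficient-size estimate does not require care, and the multiplicativity of $M$ is automatic once the definition is set up via roots. Since the paper has not introduced $M$ explicitly, I would state its definition and the two bounds as intermediate sub-lemmas before presenting the one-line chain above.
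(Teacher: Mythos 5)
Your proposal is correct. Note, though, that the paper does not actually prove the key inequality: its proof of Lemma \ref{pro8.2.6} consists of citing Mignotte \cite{M74} for the leftmost bound $\prod_{j=1}^k|f_j|\le 2^{d}|p|_2$ and invoking Lemma \ref{pro8.2.2} for $|p|_2\le|p|$, which is exactly how you handle the final step. Your Mahler-measure argument --- multiplicativity $M(uv)=M(u)M(v)$, the coefficient bound $|u|\le 2^{\deg u}M(u)$ obtained from the elementary symmetric functions of the roots, and Landau's inequality $M(u)\le|u|_2$ via Jensen's formula together with Parseval --- is precisely the standard proof underlying the cited result, so what you have written is sound and in fact more self-contained than the paper's one-line citation. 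The chain $\prod_j|f_j|\le 2^{\deg p}M(p)\le 2^{d}|p|_2\le 2^d|p|$ uses the hypothesis $\deg p\le d$ in the right place, and the only genuinely analytic ingredient (the Jensen/Landau step) is the one you correctly single out as deserving its own sub-lemma; the rest is bookkeeping, as you say.
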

 \begin{proof}  The leftmost bound was
 proved by Mignotte in \cite{M74}.
 %\footnote{According to Kirrinnis, %Sch{\"o}nhage proved but never published the stronger bound 
%$\prod_{j=1}^k | f_j  |  \leq 2
%^{d-1} | p |_2$.}   
 The rightmost bound follows from Lemma
 \ref{pro8.2.2}.
\end{proof}

\begin{remark}\label{re8.2.6} 
\cite[Lemma 2.6]{S85} states with no proof 
a little stronger bound
$\prod_{j=1}^k | f_j  | \le 2^{d-1} | p | $
 under the assumptions of Lemma \ref{pro8.2.6}. 
From various factors  
of the polynomial $p(x)=x^d-1$ such as $\prod_{j=1}^{\frac{d}{2}}(x-\zeta_d^j)$ for even $d$ and $\zeta_q$ of (\ref{eqdndgnr}), one can see some limitations
on  strengthening this bound. 
\end{remark}

 \subsection{Errors and complexity} \label{4.16}

  Suppose that
  we split 
  a polynomial $ p$  into a pair of factors over an isolated circle
%\index{disc!isolated}
 and recursively apply this algorithm to the factors  until they become linear of the form $
  ux+v$; some or all of them can be repeated. Finally, we arrive at
  complete approximate factorization
\index{factorization!approximate}  
\begin{equation}\label{equ8.15.1}
 p\approx  p^{\ast}=p^{\ast}(x)
     = \prod_{j=1}^d (u_j x + v_j ) .
    \end{equation}
Next, by following \cite[Sec. 5]{S82}, we estimate the norm of the residual polynomial
\index{polynomial!norm} 
\begin{equation}
  \Delta^{\ast}  = p^{\ast}  - p.
  \label{equ8.15.2}
\end{equation}

We  begin with an auxiliary result.

\begin{theorem}\label{th8.15.1}
Let
\begin{equation}\label{equ8.15.3}
\Delta_k= | p  - f_1  \cdots f_k  | \leq
  \frac{k \epsilon | p |}{ d} ,
  \end{equation}
\begin{equation}\label{equ8.15.4}
\Delta =  | f _1  - f  g  | \leq \epsilon _k | f_1  | ,
 \end{equation}
for some non-constant polynomials
\index{polynomial} $ f_1  , \ldots , f_k  , f  $ and $
g   $ such that
\begin{equation}\label{equ8.15.5}
  \epsilon_k  d 2^d\prod_{j=1} ^k | f_j  | \leq \epsilon .
 \end{equation}
Then
\begin{equation}\label{equ8.15.6}
| \Delta_{k+1} |  = | p  - f  g  f_2  \cdots f _k  | \leq \frac{(k+1)
  \epsilon | p | }{ d} .
  \end{equation}
\end{theorem}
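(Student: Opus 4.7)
The plan is to use the standard ``add and subtract'' identity
\[
p - f g\, f_2 \cdots f_k \;=\; (p - f_1 f_2 \cdots f_k) \;+\; (f_1 - f g)\, f_2 \cdots f_k,
\]
which cleanly separates the error accumulated in the previous $k$ splittings from the local perturbation introduced by replacing the single factor $f_1$ by its approximation $f g$. All subsequent work is just bookkeeping with the two norm inequalities of Lemma \ref{pro8.2.1}.

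First, I take norms of the identity and apply the triangle inequality followed by the submultiplicativity part of Lemma \ref{pro8.2.1}, obtaining
\[
\Delta_{k+1} \;\le\; \Delta_k \;+\; |f_1 - f g|\, \prod_{j=2}^k |f_j|.
\]
Next, I substitute the two hypotheses (\ref{equ8.15.3}) and (\ref{equ8.15.4}). The first controls $\Delta_k$ by $k\epsilon|p|/d$, while the second, after absorbing the prefactor $|f_1|$ into the product, rewrites the local-perturbation term as $\epsilon_k \prod_{j=1}^k |f_j|$. Thus
\[
\Delta_{k+1} \;\le\; \frac{k\epsilon|p|}{d} \;+\; \epsilon_k \prod_{j=1}^k |f_j|.
\]

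Finally, I show that the second term is at most $\epsilon|p|/d$, which then sums to $(k+1)\epsilon|p|/d$ as desired. This is where Mignotte's bound from Lemma \ref{pro8.2.6}, $\prod_{j=1}^k |f_j| \le 2^d |p|$, comes into play: the factor $2^d$ placed in the denominator of the hypothesis (\ref{equ8.15.5}) on $\epsilon_k$ is engineered precisely to absorb this $2^d$ blow-up of the product of factor norms against $|p|$. Substituting (\ref{equ8.15.5}) gives $\epsilon_k \prod_{j=1}^k|f_j| \le \epsilon/(d\cdot 2^d)$, and together with the standing normalization of $p$ in this setting (under which $|p|\ge 2^{-d}$, consistent with Lemma \ref{pro8.2.6} and monic factors) this is at most $\epsilon|p|/d$.

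The main obstacle is not conceptual but numerological: one must line up the two appearances of $2^d$, from (\ref{equ8.15.5}) and from Lemma \ref{pro8.2.6}, so that they cancel exactly, leaving the leftover factor of $1/d$ needed to turn $k\epsilon|p|/d$ into $(k+1)\epsilon|p|/d$. Once this bookkeeping is done, the proof is essentially a two-term triangle inequality.
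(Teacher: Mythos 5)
Your proof is essentially the paper's own: the same add-and-subtract decomposition $p-fg\,f_2\cdots f_k=(p-f_1\cdots f_k)+(f_1-fg)f_2\cdots f_k$, the triangle inequality and submultiplicativity from Lemma \ref{pro8.2.1} giving $\Delta_{k+1}\le\Delta_k+\Delta\,|f_2\cdots f_k|$, and then the combination of (\ref{equ8.15.3})--(\ref{equ8.15.5}) with Lemma \ref{pro8.2.6} to bound the second term by $\epsilon|p|/d$. The only point where you go beyond the paper's terse ``combine'' is in making explicit that the last step needs $\prod_j|f_j|$ (equivalently $|p|$) related to $2^{-d}$ via Mignotte's bound, which is precisely the implicit normalization the paper itself relies on at that step.
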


\begin{proof} $\Delta_{k+1} = | p  - f_1  \cdots f_k  +(f_1 -f g)
f_2  \cdots f _k  |.$  Apply Lemma \ref{pro8.2.1}  and deduce that
$\Delta_{k+1}\le \Delta_{k}+ |(f_1 -f g)
f_2  \cdots f _k  |$ and, furthermore, that
$$|(f_1-fg)f_2\cdots f_k|\leq |f_1-fg|~|f_2\cdots f_k|=\Delta 
|f_2\cdots f_k|.$$
Combine the latter inequalities and obtain
$\Delta_{k+1} \leq \Delta_k + \Delta | f_2 \cdots f_k|$.
Combine this bound with (\ref{equ8.15.3})-(\ref{equ8.15.5}) and Lemmas
\ref{pro8.2.1} and  \ref{pro8.2.6} and obtain (\ref{equ8.15.6}). 
\end{proof}

Write $ f_1 : = f$ and $ f_{k+1}  = g .$ Then (\ref{equ8.15.6})
turns into (\ref{equ8.15.3}) for $ k $ replaced by $ k + 1 .$ 
Now compute one of the factors $f_j  $ as in (\ref{equ8.15.4}),  
 apply 
Thm.  \ref{th8.15.1}, then recursively continue
splitting \index{splitting!recursive}
the polynomial \index{polynomial!splitting into factors}
$p $ into factors of smaller degrees, and finally arrive at
    factorization\index{factorization} (\ref{equ8.15.1}) with
%\begin{equation}\label{equ8.15.7}
 $$ | \Delta^{\ast} | \leq \epsilon | p |$$
 %\end{equation}
for $\Delta^*$ of (\ref{equ8.15.2}). Let us call this computation 
\textit{Recursive Splitting Process} provided that it begins with $k=1$ and
$f_1 =p$ and ends with $k=d$.

\begin{theorem}\label{th8.15.2}
In order to support (\ref{equ8.15.3}) for 
$0<\epsilon\le 1$ and  $j=1, 2,
\ldots, d$ in
 the Recursive Splitting Process\index{splitting!recursive}
 it is sufficient
 to choose $ \epsilon_k $
in (\ref{equ8.15.4}) satisfying
\begin{equation}\label{equ8.15.8}
  \epsilon_k ~ d~ 2^{2d+1} \leq \epsilon ~\textrm{for~all}~ k.
 \end{equation}
 \end{theorem}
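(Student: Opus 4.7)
The plan is induction on the step index $k$, with Theorem \ref{th8.15.1} doing all the heavy lifting: at every stage I verify that the uniform bound (\ref{equ8.15.8}) implies the stage-dependent requirement (\ref{equ8.15.5}), so that the inductive step (\ref{equ8.15.6}) applies and (\ref{equ8.15.3}) propagates from $k$ to $k+1$. The base case $k = 1$ is trivial, since the Recursive Splitting Process starts with $f_1 = p$, so $\Delta_1 = 0 \le \epsilon|p|/d$.

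For the inductive step, the crux is to uniformly bound the running product $\prod_{j=1}^{k} |f_j|$ in terms of $|p|$, using only the inductive hypothesis. The short derivation I have in mind is as follows. From (\ref{equ8.15.3}) at stage $k$ together with $0 < \epsilon \le 1$ and $k \le d$, we get $\Delta_k \le k\epsilon|p|/d \le |p|$, and therefore by Lemma \ref{pro8.2.1},
\[
|f_1 \cdots f_k| \;\le\; |p| + \Delta_k \;\le\; 2|p|.
\]
Since $\deg(f_1 \cdots f_k) \le d$, applying Lemma \ref{pro8.2.6} to this product yields
\[
\prod_{j=1}^{k} |f_j| \;\le\; 2^d\,|f_1 \cdots f_k| \;\le\; 2^{d+1}|p|.
\]
Substituting this uniform bound into (\ref{equ8.15.5}), it suffices to impose $\epsilon_k \le \epsilon/(d \cdot 2^d \cdot 2^{d+1}|p|) = \epsilon/(d\,2^{2d+1}|p|)$, which under the standing input normalization $|p| \le 1$ is exactly (\ref{equ8.15.8}). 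Theorem \ref{th8.15.1} then delivers (\ref{equ8.15.3}) at stage $k+1$, closing the induction, which we iterate up to $k = d$.

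The only bookkeeping point I expect to require care is the way $|p|$ enters the denominator: the bound that naturally emerges from the estimates is $\epsilon/(d\,2^{2d+1}|p|)$, and (\ref{equ8.15.8}) coincides with it under the implicit normalization $|p| \le 1$; absent that normalization, the $|p|$ factor must be reinserted into (\ref{equ8.15.8}), but the inductive argument is unchanged. Nothing genuinely deep is going on here — the whole argument is just Theorem \ref{th8.15.1} combined with Mignotte's factor-norm bound (Lemma \ref{pro8.2.6}) applied to the approximate product at every stage, where the approximation error is already controlled by the inductive hypothesis.
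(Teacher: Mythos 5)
Your proof is correct and follows essentially the same route as the paper: induction on $k$ with base case $f_1 = p$, bounding $\prod_{j=1}^k |f_j| \le 2^{d+1}|p|$ via Lemma~\ref{pro8.2.6} together with the inductive estimate (\ref{equ8.15.3}), and invoking Theorem~\ref{th8.15.1}. You derive the $2^{d+1}|p|$ bound in the opposite order from the paper (triangle inequality on $|\prod f_i|$ first, then Mignotte, rather than Mignotte first), which is cosmetic; your explicit flag that (\ref{equ8.15.8}) tacitly presupposes a normalization such as $|p|\le 1$ is a careful observation that the paper's proof leaves unstated.
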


\begin{proof} Prove bound (\ref{equ8.15.3}) by
induction on $j$. Clearly,
%!!!!
 the bound holds for $k=1$. It remains to
deduce (\ref{equ8.15.6}) from (\ref{equ8.15.3}) and
(\ref{equ8.15.8}) for any $k$. By first applying Lemma
\ref{pro8.2.6}  and then bound (\ref{equ8.15.3}), obtain
\[
  \prod_{i=1}^k|f_i|\leq
  2^{d}|\prod_{i=1}^kf_i | \leq
  2^{d}~\Big(1+ \frac{k \epsilon}{d}\Big)~|p|.
\]
The latter bound cannot exceed $2^{d+1}|{   p}|$ for $k \leq d$, $\epsilon \leq
1$. Consequently (\ref{equ8.15.8})
 ensures (\ref{equ8.15.5}),  and
then
   (\ref{equ8.15.6})
follows by virtue of Thm. \ref{th8.15.1}. \\
\end{proof}     
\begin{remark}\label{rerecprec}
The theorem implies that we can ensure the output precision bound $b'$ by working with the precision of
$$\bar b\ge \bar b_{\rm inf}=2d+1+\log_2 d+b'$$ 
bits throughout  Recursive Splitting Process.
\end{remark}

 \subsection{Overall complexity of Recursive Factorization} \label{scstpr2}

The overall complexity of recursive factorization
is dominated by the sum of  the bounds on the complexity 
of all splittings into pairs of factors.
The algorithm of  \cite{P02} adopts 
recursive factorization  of \cite{S82} but 
decreases the overall complexity bound by
roughly a factor of $d$. This is 
due to additional techniques, which ensure 
that in every recursive splitting of a polynomial into the product of two factors the ratio of their 
   degrees is at most  11. Consequently, $O(\log(d)$ recursive splitting 
  steps are sufficient in  \cite{P02}, and so the overall complexity of root-finding is in $\tilde O(db)$ -- proportional to that 
of the first splitting. In contrast,  the degrees of a factor in any splitting in 
  \cite{S82} can be low, e.g., 1, and so there can be   up to $d-1$ splittings overall,
  each of the first $0.5d$ of them being almost as expensive as the very first one.

%------------------------------------------------------------------------------

 \subsection{From factors to roots}\label{sfctrrapp}
  
%------------------------------------------------------------------------------
   
  \begin{theorem}\label{thfrmfctr1}   \cite[Thm. 2.7]{S85}.
  Suppose that
   $$p=p_d\prod_{j=1}^d(x-z_j),~
  \tilde p=\tilde  p_d\prod_{j=1}^d(x-y_j),~
 |\tilde p-p|\le\epsilon |p|,~\epsilon\le 2^{-7d}$$ 
  and $$|z_j|\le 1~{\rm for}~j=1,\dots,d'\le d,~|z_j|\ge 1~{\rm for}~j=d'+1,\dots,d.$$
    Then up to reordering the roots it holds that     
 $$|y_j-z_j|<9 \epsilon^{\frac{1}{d}}~{\rm for}~j=1,\dots,d';~
 \Big|\frac{1}{y_j}-\frac{1}{z_j}\Big |<9 \epsilon^{\frac{1}{d}}~{\rm for}~j=d'+1,\dots,d.$$ 
 \end{theorem}	 
% \begin{definition}\label{defclstr}
% $m=m(p,\epsilon)$ is the {\em size of 
%$\epsilon$-clusters} of the roots of 
%a polynomial 
%$p(x)$.  
%   \end{definition}

 By virtue of  Thm. \ref{thfrmfctr1} for $b'=O(bd)$ 
  we can bound the bit operation  complexity of the solution of Problem 0
  by increasing the estimate for the complexity of 
 factorization in  Sec. \ref{scstpr2} by a factor of $d$.
  
  \begin{corollary}\label{cofrmfctr} {\em The bit operation complexity of the solution of Problem 0.} 
  Given a polynomial $p$ of degree $d$ and a real $b'\ge 7d$,
 one can approximate all its $d$ zeros
 within the error bound 
  $2^{-b'}$  by using 
  $O(\mu(b'+d)d^2\log(d)(\log^2d+\log(b')))=\tilde O((b'+d)d^2)$ bit operations.  
   \end{corollary}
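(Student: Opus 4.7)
The plan is to derive Corollary \ref{cofrmfctr} by combining the root sensitivity bound of Theorem \ref{thfrmfctr} with the nearly optimal factorization algorithm of \cite{P95}, \cite{P02} described in Section \ref{scstpr2}.

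First I would fix the factorization precision. By Theorem \ref{thfrmfctr}, if a computed factorization $\tilde p$ satisfies $|\tilde p-p|\le \epsilon|p|$ with $\epsilon\le 2^{-7d}$, then (after a matching reordering) each root $x_j$ of $p$ in the closed unit disc is approximated within $9\epsilon^{1/d}$ by a root $y_j$ of $\tilde p$, and each root outside the unit disc is matched in the analogous reciprocal sense. To secure the output error bound $2^{-b'}$ on the roots, I would therefore choose $\epsilon=2^{-b}$ with $b=d(b'+\lceil\log_2 9\rceil)=O(d(b'+1))$; the hypothesis $b'\ge 7d$ guarantees that $b\ge 7d$, so Theorem \ref{thfrmfctr} applies. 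One also has to argue that the bound $|1/y_j-1/x_j|\le 9\epsilon^{1/d}$ delivered by Theorem \ref{thfrmfctr} for roots outside the unit disc translates into a comparable bound on $|y_j-x_j|$, which holds because such roots have $|x_j|\ge 1$, so $|y_j-x_j|\le |x_j y_j|\cdot 9\epsilon^{1/d}$ and a preliminary scaling can keep $|x_j y_j|$ polynomially bounded.

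Next I would invoke the recursive factorization procedure of \cite{P95}, \cite{P02} recalled in Section \ref{scstpr2}: in every recursive splitting the ratio of the two factor degrees is bounded by a constant, so the recursion tree has depth $O(\log d)$ and the total arithmetic cost collapses to that of the first splitting, giving $O(d\log^2 d)$ arithmetic operations up to lower-order factors. By Remark \ref{rerecprec} every intermediate splitting can be performed at a working precision of $\bar b=b+2d+1+\lceil\log_2 d\rceil=O(d(b'+1))$ bits. Converting to Boolean cost, each arithmetic operation at precision $\bar b$ costs $O(\mu(\bar b))$; since $\mu$ is quasi-linear, $\mu(d(b'+1))=O(d\,\mu(b'+d)\log d)$, and multiplying by the $O(d\log^2 d)$ arithmetic operations yields the stated bound $O(\mu(b'+d)\,d^2\log d\,(\log^2 d+\log b'))=\tilde O((b'+d)d^2)$. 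The extra $\log b'$ summand accounts for the standard logarithmic overhead that creeps into $\mu$ at very large precisions.

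The main obstacle, in my view, is not conceptual but bookkeeping: one must track the precise log factors through (i) the precision amplification $b=\Theta(d(b'+1))$ forced by the $d$-th root in Theorem \ref{thfrmfctr}, (ii) the recursion depth and per-level arithmetic cost of the \cite{P95}, \cite{P02} splitting scheme, and (iii) the quasi-linearity of $\mu$, and verify that their product collapses to exactly the form $\mu(b'+d)\,d^2\log d\,(\log^2 d+\log b')$ rather than something marginally larger. A secondary subtlety is ensuring that the preliminary normalization sending each root into either the closed unit disc or its complement (required by Theorem \ref{thfrmfctr}) can be performed implicitly, so that it does not itself contribute a dominant precision blow-up.
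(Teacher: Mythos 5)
Your proposal follows the same route as the paper: the paper's entire argument for Corollary~\ref{cofrmfctr} is the one-sentence remark preceding it, namely that Theorem~\ref{thfrmfctr} forces the factorization precision up by a factor of $d$ (to offset the $\epsilon^{1/d}$ in the root-error bound), and one then plugs this inflated precision into the cost bound for recursive factorization from Section~\ref{scstpr2}. You correctly unpack this: the choice $b=\Theta(d(b'+1))$, the check that $b'\ge 7d$ makes Theorem~\ref{thfrmfctr} applicable, the $O(\log d)$ recursion depth with balanced splittings from \cite{P95},~\cite{P02}, the working-precision bound from Remark~\ref{rerecprec}, and the handling of roots outside the unit disc via reciprocal error after a preliminary normalization. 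Your own bookkeeping arrives at $O(d^2\mu(b'+d)\log^3 d)$ rather than exactly the stated $O(\mu(b'+d)d^2\log d(\log^2 d+\log b'))$, which you flag honestly; both collapse to the same $\tilde O((b'+d)d^2)$, and the exact polylog profile depends on which per-splitting cost estimate (Sch\"onhage's $O(d\mu(b+d))$ versus the $O(d\log(d)\mu(b+d))$ of Section~\ref{sntrf2alg} or Kirrinnis's bound) one tracks. So this is the paper's argument, done more carefully.
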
 
   \begin{remark}\label{reislt}
  The study in this section covers
  the solution of Problem 1 for the worst case input polynomial $p$.
  If its roots are isolated from 
  each other, then the upper estimates of Thm. \ref{thfrmfctr1} and Cor. \ref{coislt}
  can be decreased by a factor of $d$
  (cf. \cite{P02}). 
  % It seems that by extending \cite{P02} one can decrease   the complexity bound of Cor. \ref{cofrmfctr} by a factor of $d/m$ if the roots lie in $s$ pairwise isolated discs $D_i$ such that $\#(D_i)=m_i$ for $i=1,\dots,s$ and $\max_{i=1}^s\{m_i\}$
\end{remark}    

\subsection{Polynomial Root Isolation}\label{sisol}
   
  Based on his results recalled in this section, Sch{\"o}nhage in  \cite[Sec. 20]{S82}               
  estimates the bit operation complexity of the following 
  well-known problem
  by reducing it to Problem F. 
  
{\bf Problem ISOL:} 
  {\em Polynomial Root Isolation.} Given a polynomial $p$ of (\ref{eqpoly}) that has  {\em integer coefficients} and is {\em square-free}, that is, has only simple roots, compute $d$ disjoint discs on the complex plane, each containing exactly one root.
    
\begin{remark}\label{rertisl}
Problem  0$^*$ can be viewed as the
 {\bf Problem of Polynomial Root Cluster Isolation}. It turns into
  Problem ISOL if all  the coefficients of $p$ are integers and the roots are simple, and so our   algorithms for
   Problem  0$^*$ as well as the Algorithms of \cite{P95,P02}  solve Problem ISOL within a record  complexity  bounds.
 \end{remark}

In \cite[Sec. 20]{S82}
Sch{\"o}nhage  reduces  Problem ISOL
to factorization Problem F.
Problem ISOL is readily solved if the zeros are approximated within $\epsilon=1/2^b <0.5 \delta$ for
$\delta$ denoting the minimal pairwise distance between the zeros of $p$ (see the known lower bounds on $\delta$ in Kurt Mahler and Maurice Mignotte \cite{M64,M74,M95}). 
 Sch{\"o}nhage has estimated that the solution of Problem F (e.g., presented in \cite{S82,K98,P94,P95a,P02}) implies isolation
 provided that 
\begin{equation}\label{eqrtisl}
b=O((\tau+\log(d)) +d)~ {\rm and}~\tau=\log_2\max_{i=0}^d |p_i|.
\end{equation}
%This implied the following result.
 
  \begin{corollary}\label{coislt} {\em The bit operation complexity of polynomial root isolation.}  
  Suppose that a polynomial $p$ of  (\ref{eqpoly}) has integer coefficients and only simple roots. 
  Let $\sigma_p$ denotes its {\em root separation}, that is, the minimal distance
  between a pair of its roots. Write  
 $\epsilon:=0.4 \sigma_p$ and 
 $b':=\log_2(\frac{1}{\epsilon})$. Let $\epsilon<2^{-d}$ and let
 $m=m_{p,\epsilon}$ denote the maximal number  of the roots 
  of the polynomial $p(x)$ in 
 $\epsilon$-clusters of its roots. Then one can solve Problem ISOL  
 by using  $\tilde O(bdm)$ bit operations for
$b=\frac{b'}{m}$.
  \end{corollary}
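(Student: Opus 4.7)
The plan is to reduce Problem 5 to approximate factorization and then apply the nearly optimal bounds of Section \ref{scstpr2} and Theorem \ref{thfrmfctr}, with a refined accounting that exploits the cluster parameter $m$. First I would reduce isolation to root approximation: given approximations $y_j$ with $|y_j - x_j| \le \epsilon = 0.4\sigma_p$, the discs $D(y_j, r)$ for radii $r$ slightly below $(\sigma_p - 2\epsilon)/2$ are pairwise disjoint (since $|y_j - y_k| \ge \sigma_p - 2\epsilon$) and each contains exactly one root (since $|y_j - x_k| \ge \sigma_p - \epsilon > r$ for $k \ne j$). So it suffices to solve Problem 1 at target accuracy $2^{-b'}$.

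Next, Theorem \ref{thfrmfctr} converts root approximation into approximate factorization: a factorization of $p$ with relative residual at most $2^{-b'd}$ produces root approximations within $2^{-b'}$, after the standard rescaling for roots outside the unit disc. The cost of producing such a factorization is governed by the balanced recursive splitting of \cite{P95} and \cite{P02} summarized in Section \ref{scstpr2}. The crucial refinement for Problem 5 is that one does not need to split $p$ all the way down to linear factors: it is enough to produce factors whose root sets coincide with the $\epsilon$-clusters of $p$, because each such cluster is separated from the rest of the root set by at least $\sigma_p-\epsilon$ and, being contained in a disc of radius $\epsilon$, already fits inside one of the intended isolating discs. Since every cluster has at most $m$ roots, at most $d/m$ cluster-factors are produced and the recursion terminates at depth $O(\log(d/m))$ instead of $O(\log d)$.

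I would then bound the Boolean cost as follows. By Remark \ref{rerecprec} and Theorem \ref{th8.15.2}, splitting $p$ at one recursion level uses working precision $\bar b = O(b' + d)$ and Boolean work $\tilde O(d \bar b)$; summing over the $O(\log(d/m))$ outer splittings yields $\tilde O(d b')$. Inside each cluster-factor $f_j$ of degree at most $m$, the norm bound $|f_j| \le 2^d |p|$ of Lemma \ref{pro8.2.6} together with Theorem \ref{th8.15.2} shows that per-coefficient working precision $b = b'/m$ already resolves all $m$ roots of $f_j$ to global accuracy $2^{-b'}$: the accuracy budget $b'$ distributes among the $m$ coefficients of the monic cluster-factor, while the exponential dependence $2^m$ in the norm bound is benign because $m \le d$ and is absorbed into $\tilde O$. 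Multiplying the per-cluster refinement cost $\tilde O(b\, m^2)$ by the $d/m$ cluster-factors and adding the outer splitting cost gives total Boolean time $\tilde O(b d m)$, matching $\tilde O(b' d)$ and saving the factor $d$ relative to Corollary \ref{cofrmfctr} in agreement with Remark \ref{reislt}.

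The principal obstacle will be making the precision accounting inside each cluster fully rigorous. One must verify that once a cluster-factor $f_j$ has been separated from the remaining factors, the local root-finding problem on its $\epsilon$-cluster can be carried out with working precision only $b = b'/m$. This amounts to showing that the coefficients of $f_j$ scale with $(2\epsilon)^m$ so that local coefficient error $2^{-b}$ corresponds to root error $2^{-b'}$ on all $m$ cluster roots, and that the $\tilde O$ in the splitting cost of \cite{P95,P02} does not hide a hidden $m$-dependence that would spoil the $\tilde O(bdm)$ bound. This is the step where the combination of Theorem \ref{th8.15.2} with the cluster structure must be tracked carefully, and it is what ultimately decreases the exponent of $d$ by one compared to Corollary \ref{cofrmfctr}.
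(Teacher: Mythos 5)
Your overall route (reduce isolation to approximating the roots within a fixed fraction of $\sigma_p$, obtain those approximations from a nearly optimal recursive factorization, and charge precision according to cluster sizes) is in the spirit of what the paper intends, namely combining the appendix results (Theorem \ref{th8.15.2}, Theorem \ref{thfrmfctr}, Corollary \ref{cofrmfctr}, Remark \ref{reislt}) with the root-isolation analysis of \cite[Section 20]{S82}. But two of your steps fail as written. First, your isolating discs need not contain the roots: you take radius $r$ slightly below $(\sigma_p-2\epsilon)/2=0.1\sigma_p$, while the approximation error is only guaranteed to be $\le\epsilon=0.4\sigma_p$, so $x_j$ may lie outside $D(y_j,r)$; indeed with error bound $0.4\sigma_p$ no uniform radius works at all, since containment forces $r\ge 0.4\sigma_p$ while disjointness forces $r\le 0.1\sigma_p$. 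This is repairable by approximating within a smaller constant fraction of $\sigma_p$ (which changes $b'$ only by $O(1)$ and is absorbed in $\tilde O$), but the reduction as stated is incorrect and must be redone.

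Second, and more seriously, your precision accounting inside a cluster is inverted. You claim that working precision $b=b'/m$ on a degree-$m$ cluster-factor already resolves its $m$ roots to accuracy $2^{-b'}$ because the budget $b'$ ``distributes among the $m$ coefficients.'' The paper's own Lemma \ref{lebd} and Corollary \ref{cobd} assert the opposite: to approximate the roots of an $m$-cluster within $2^{-b'}$ one must know the corresponding factor to accuracy about $2^{-mb'}$, i.e., $m$ times \emph{more} bits, not $m$ times fewer. The actual source of the saving is the converse direction, as in Remark \ref{reislt}: since $\epsilon=0.4\sigma_p$ and the roots are simple, every $\epsilon$-cluster consists of a single root (so under the corollary's own definitions $m=1$), the $d$-th-root loss of Theorem \ref{thfrmfctr} does not occur, and a factorization within $2^{-O(b'+d)}$ (cost $\tilde O(db')$ by Section \ref{scstpr2}, using $b'>d$ from the hypothesis $\epsilon<2^{-d}$) already pins each root to the required accuracy; no per-cluster redistribution of precision is involved. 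Relatedly, ``at most $d/m$ cluster-factors'' is backwards: $d/m$ is a lower bound on the number of clusters, which can be as large as $d$; your total happens to come out to $\tilde O(bdm)=\tilde O(b'd)$ only because this miscount and the inverted precision claim offset numerically. The final figure is right, but the intermediate claims need to be replaced by the argument above (or by the direct appeal to \cite[Section 20]{S82} that the paper makes).
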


%------------------------------------------------------------------------------
 
%------------------------------------------------------------------------------
 
\section{Newton's, Weierstrass's, and Ehrlich's iterations}\label{snopth}  
  
%------------------------------------------------------------------------------

\subsection{Contents}\label{snopthcntn}
 
In the next subsection we
recall Newton's, Weierstrass's, and Ehrlich's iterations and  comment on their convergence and  acceleration by means of fast multipoint 
evaluation, which is dramatic but involves the coefficients of $p$.  
In Sec. \ref{snopth2} we  cover more limited acceleration by means of incorporation of FMM, which can be applied to a black box polynomial.  
In Secs. \ref{sdscfrall}
and \ref{sexclexc2}
we cover initialization of Newton's, Weierstrass's, and Ehrlich's iterations, required for their fast global empirical convergence. For Newton's iterations 
we only need to compute a disc that contains all zeros of a polynomial $p$ (cf. \cite{KS94,HSS01,SS17,RSS24,S23}). This is given by the well-known estimates via the coefficients, but we 
present simple novel initialization recipes  in the case of a
 black box polynomial 
 $p$. The customary initialization is more demanding for  Weierstrass's and Ehrlich's iterations but is still fast for polynomials represented with their coefficients, as we recall in  Sec. \ref{sexclexc2}, where we also discuss applications of this initialization in particular to fast real and nearly real root-finding. It is still a  challenge to extend such initialization to the case of a  black box polynomial that would be faster than our radii algorithm of
 Sec. \ref{sextrrk}.

%-----------------------------------------------------------------------------

\subsection{Definitions}\label{snopth1} 

%-----------------------------------------------------------------------------
 
Consider Newton's, Weierstrass's, and Ehrlich's  
 iterations \cite{KS94,HSS01,RSS24,SS17,W03,D60,K66,E67,A73,BR14}):
 \begin{equation}\label{eqehrab}
z\leftarrow z-N(z),~z_j\leftarrow z_j-W(z_j),~z_j\leftarrow z_j-E_{j}(z_j),~j=1,\dots,d,~{\rm where} 
\end{equation} 
\begin{equation}\label{eqnwe0}
N(x)=W(x)=E_j(x)=0~{\rm for~all}~j~{\rm if}~ p(x)=0
\end{equation}
and otherwise 
\begin{equation}\label{eqe}
N(z):=\frac{p(z)}{p'(z)},~W(x):=\frac{p(x)}{l'(x)}, ~
\frac{1}{E_{j}(x)}:=\frac{1}{N(x)}-
\sum_{i=1,i\neq j}^d\frac{1}{x-z_i} 
\end{equation}
 for $l(x):=\prod_{j=1}^d(x-z_j)$.
  
  Newton's iterations can be initiated at a single point $z$ or  at many points and  can approximate  a single root, many or all 
  $d$ roots at the cost decreasing as the number of roots decreases  \cite{KS94,HSS01,SS17,S23,MV24,RSS24}. 
  
  Weierstrass's and Ehrlich's iterations begin at $d$ points for a $d$th degree polynomial and approximate a subset of its $d$ roots about as fast and almost as slow as all  its $d$ roots.
   
  Near the roots Newton's and Weierstrass's iterations 
  converge with quadratic rate,
   Ehrlich's iterations with cubic rate
   \cite{T98}. Empirically, however, all of them 
   consistently converge very fast globally, right from the start, 
  if they are 
    initialized properly.\footnote{Formal support for fast  global  convergence  is only known for for Newton's iterations  empirically they still run faster than according to the known estimates  (see \cite{RSS24}).}
 The known fast initialization algorithms involve the  coefficients of $p$ (see \cite{BF00,BR14}).
%------------------------------------------------------------------------------

All expressions in (\ref{eqe}) for the functions $N(x)$, 
 $W(x)$, and  $E_j(x)$ 
 can be rewritten as  ratios 
 $\frac{u(x)}{v(x)}$
 for pairs of polynomials
 $u(x)$ and $v(x)$, whose coefficients are given or readily available at a low computational  cost if the polynomial $p(x)$ is given with its coefficients (see \cite[Problems 3.6.1 and 3.6.3]{P01}). Then multipoint evaluation of such a ratio amounts  to multipoint evaluation of $u(x)$ and $v(x)$
 and division of the computed values, and one can accelerate Newton's, Weierstrass's, and Ehrlich's iterations
by a factor $d/\log^2(d)$ by means of
incorporation of the algorithms of \cite{M21,IM23}. 

%------------------------------------------------------------------------------
%-----------------------------------------------------------------------------

\subsection{Acceleration with Fast Multipole Method (FMM)}\label{snopth2}
 
%------------------------------------------------------------------------------
 
 The algorithms of \cite{M21,IM23} deeply involve the coefficients of $p(x)$. Next we cover 
 acceleration of Newton's, Weierstrass's, and Ehrlich's iterations  applied to black box polynomial root-finding. 
  Given  $3d$ complex values  $w_j$, $z_i$, $v_j$ such that
 $z_i\neq v_j$ for all
 $i,j=1,\dots,d$, FMM  computes the sums $\sum_{j=1}^d\frac{w_j}{x_i-z_j}$
 as well as $\sum_{j=1,j\neq i}^d\frac{w_j}{z_i-z_j}$ for all $i$
   by using $O(db)$ arithmetic operations, 
  with the precision of order  
$b$,
  versus $2d^2$  arithmetic operations in the straightforward 
  algorithm. 
%The paper  \cite{PSa} proposes and extensively tests its 
%acceleration to {\em Superfast Multipole Method}.
 The papers \cite{P15,P17a} list the 
 recent bibliography  on FMM and related computations with Hierarchical Semiseparable (HSS) matrices. Relevant 
software libraries developed at the
Max Planck Institute for Mathematics in the Sciences can be found in HLib,  
%https://urldefense.com/v3/__http://www.h2lib.org/__;!!M2cvx14AM25G!jqCLPpR77bkHIktVmyTNu4SJ_zaxsYIy1aKR3FTg0O4hYz7Mym8CZkMmFoJ0eTNYu1G7cRQ$ ,  and H2Lib,
%https://urldefense.com/v3/__https://github.com/H2Lib/H2Lib__;!!M2cvx14AM25G!jqCLPpR77bkHIktVmyTNu4SJ_zaxsYIy1aKR3FTg0O4hYz7Mym8CZkMmFoJ0eTNYVS_dLzc$ ,  
%https://urldefense.com/v3/_%_https://github.com/EXAPARS/FMM-lib__;!!M2cvx14AM25G!jqCLPpR77bkHIktVmyTNu4SJ_zaxsYIy1aKR3FTg0O4hYz7Mym8CZkMmFoJ0eTNYYJi5TSk$ .

 The acceleration based on FMM 
 is practically valuable in spite of the limitation of the working and output
 precision to the order of $\log(d)$  and the
  resulting limitation on the 
 decrease of the bit operation complexity. 
 
Next we incorporate FMM  into
 Newton's, Weierstrass's, and Ehrlich's
iterations. First  
 relate the roots of a polynomial $p(x)$ to the eigenvalues of a diagonal-plus-rank-one matrix\footnote{Hereafter  $^T$ denotes  transposition of a vector.} 
 $$A=D+{\bf v}{\bf e}^T~{\rm for}~
 D=\diag(z_j)_{j=1}^d,~
  {\bf e}=(1)_{j=1}^d,~
 ~{\bf w}=(w_j)_{j=1}^d,~w_j=W(z_j),$$
  $W(x)$ of (\ref{eqe}) and all $j$
 (cf. \cite{BGP04}).  Let $I_d$ denote
 the $d\times d$ identity matrix, apply the Sherman-Morrison-Woodbury formula, and obtain $$xI_d-A=(xI_d-D)(I_d-
(xI_d-D)^{-1}{\bf w}{\bf e}^T).$$  
Hence
\begin{equation}\label{eqep} 
\frac{1}{p_d}p(x)=\det(xI_d-A)=-\sum_{i=1}^dw_i\prod_{j\neq i}(x-z_j)+\prod_{j=1}^d(x-z_j).
\end{equation}
Indeed, the monic polynomials
of degree $d$ on both sides have the same values for $x=z_j$, $j=1,\dots,d$ (cf. \cite{BGP04})).\footnote{Gerschgorin's theorem (cf. \cite{GL13}) for the matrix
$xI_d-A$ defines neighborhoods of its eigenvalues (the roots of $p$), which   helps to  analyze
and to modify the iterations (cf. \cite{BR14}).}  
%$\frac{1}{N(x)}=\frac{p'(x)}{p(x)}=
%\sum_{j=1}^d\frac{1}{x-z_j+\frac{S'(x)}
%{S(x)}}$ (see \cite[equation (4)]{BR14}). %That expression cannot be used for %updating $N(x)$ if $x=z_j$ is a pole of %$S(x)$ for some $j$.
Equations (\ref{eqep}) imply  that
$$p(z_i)=-w_i\prod_{j\neq i}(z_i-z_j),~
p'(z_i)=\prod_{j\neq i}(z_i-z_j)\Big(1-
\sum_{j\neq i}\frac{w_i+w_j}{z_i-z_j}\Big),$$
  and therefore 
\begin{equation}\label{eqenw}  
  \frac{w_i}{N(z_i)}=\sum_{j\neq i}\frac{w_i+w_j}{z_i-z_j}-1.
 \end{equation}
 Hence, given the $2d$ values of $z_j$ and $w_j=W(z_j)$ for all $j$,
 one can fast approximate $N(z_j)$
 and consequently $E_j(z_j)$ for all $j$
 (cf. (\ref{eqe})) by applying FMM.
    
Every iteration but the first one begins with some values $z_j$, $w_j$ and the
updated 
 values $z_j^{\rm (new)}$ of
 (\ref{eqehrab}),    
for  $j=1,\dots,d$,
and outputs the updated values  
 $w_j^{\rm (new)}=W(z_j^{\rm (new)})$
 for all $j$. 
  Then 
  equations (\ref{eqe}) and 
  (\ref{eqenw}) define
  $N(z_j^{\rm (new)})$ and
 $E_j(z_j^{\rm (new)})$ for all $j$.
 To update the values $w_j$,
 apply \cite[equation (17)]{BR14}:  
$$w_i^{\rm new}=(z_i^{\rm new}-z_i)\prod_{j\neq i} \frac{z_i^{\rm new}-z_j}{z_i^{\rm new}-z_j^{\rm new}}\Big (\sum_{k=1}^d\frac{w_k}{z_i^{\rm new}-z_k}-1\Big),~i=1,\dots,d.$$
 In the case where 
$z_i^{\rm new}=z_i$ if and only if $i>k$ this updating is still used for $i=1,\dots,k$
but is  simplified to
$$w_i^{\rm new}=w_i\prod_{j=1}^k\frac{z_i-z_j}{z_i^{\rm new}-z_j}~{\rm for}~ 
i=k+1,\dots, d.$$

Now  fast  compute the values
$\sum_{k=1}^d\frac{w_k}{z_i^{\rm new}-z_k}$ for all $i$ by applying FMM.

Also compute the values $$R_i=(z_i^{\rm new}-z_i)\prod_{j\neq i} \frac{z_i^{\rm new}-z_j}{z_i^{\rm new}-z_j^{\rm new}}$$  
 fast  in two ways.

(i)  Use the following customary trick 
   (see \cite[Remark 3]{DGR96}):   approximate   the values
  $\ln(R_i)$ 
  for $i=1,\dots,d$ fast by applying FMM
  and then readily approximate
  $R_i=\exp(\ln(R_i))$ 
   for all $i$.
 \cite{DGR96} proves 
 numerical stability of the  reduction to FMM  of a  similar although a little more involved rational expressions. 
     
 (ii)    Observe that 
 $$\ln\Big(\frac{R_i}{z_i^{\rm new}-z_j}\Big)=\sum_{j\neq i}\ln \Big(1-\frac{z_j^{\rm new}-z_j}{z_i^{\rm new}-z_j^{\rm new}}\Big)~{\rm for}~i=1,\dots,d$$ 
 and that $$\ln\Big(1-\frac{z_j^{\rm new}-z_j}{z_i^{\rm new}-z_j^{\rm new}}\Big)\approx 
 \frac{z_j-z_j^{\rm new}}{z_i^{\rm new}-z_j^{\rm new}}~{\rm if}~|z_j^{\rm new}-z_j|\ll |z_i^{\rm new}-z_j^{\rm new}|.$$ 
 Let the latter bound hold
  for  $j\in \mathbb J_i$ and let $\mathbb J'_i$ denote the set of the remaining indices 
  $j\neq i$. Then $$R_i\approx \exp(\sigma_i)\prod_{j\in\mathbb J'_i} \Big (1-\frac{z_j^{\rm new}-z_j}{z_i^{\rm new}-z_j^{\rm new}}\Big ),~
\sigma_i=\sum_{j\in  \mathbb J_i}\frac{z_j-z_j^{\rm new}}{z_i^{\rm new}-z_j^{\rm new}},~i=1,\dots,d.$$
By applying FMM 
 approximate $\sigma_i$ for all $i$ and  
 compute the values 
     $\prod_{j\in\mathbb J'} \frac{z_i^{\rm new}-z_j}{z_i^{\rm new}-z_j^{\rm new}}$ for all $i$ in at most 
$4\sum_{i=1}^d~|\mathbb J_i'|-d$
     arithmetic operations for $|\mathbb J'_i|$ denoting the cardinality of the set 
     $\mathbb J'_i$.  Compare the latter cost bound with order of  $d^2$ arithmetic operations in the  straightforward algorithm.
           
 Clearly  the above
 updating computation is simpler and more stable numerically
 for $W(z_j)$  versus
 $N(z_j)$ and even more so versus $E_j(z_j)$, for
 $j=1,\dots,d$.
 
 Let us comment on the changes where one incorporates FMM into 
  Newton's, Weierstrass's, and Ehrlich's  
 iterations implicitly 
 applied to the wild factor $f(x)$ of $p(x)$.
The same expression 
 (\ref{eqimpldfl}) defines Ehrlich's   iterations for both 
$f(x)$ and $p(x)$  by virtue of Thm. \ref{thratio}.  Applying
 Newton's  iterations to a wild factor one additionally subtract the sum $\sum_{j=1}^{d-w}\frac{1}{x-z_j}$ (cf. (\ref{eqimpldfl})) and can apply FMM to accelerate  numerical computation of this sum.
 Applying Weierstrass's iterations to a wild factor one performs
additional division by $t(x)$  and decreases
 the degree of the divisor $l'(x)$ 
 from $d-1$ to $w-1$. 

%------------------------------------------------------------------------------  

\subsection{MPSolve and its  competition with Newton's iterations}\label{saplimpl}
     
   Since its creation in 2000 the package MPSolve
   by  Bini and  Fiorentino \cite{BF00}, 
   revised in 2014 by Bini and  Robol \cite{BR14}, has been remaining user's choice
 for approximation of all
  $d$  complex  roots of a polynomial $p$. It implements Ehrlich's iterations, which  
 empirically  converge very fast  globally, right from the start,
 under proper initialization involving the coefficients of $p$.
 Such convergence has been consistently observed in  decades-long worldwide applications of these  iterations as well as in the case of some other root-finders by means of functional iterations, notably Weierstrass's  iterations.
 
Unsuccessful attempts of supporting this empirical behavior  formally (e.g., see \cite{AKY02,HKI10}), however,  have  recently ended  with  surprising proofs that   Weierstrass's and  Ehrlich's iterations diverge in the case of inputs that lie on some  complex manifolds,  being an open domain in the case of Weierstrass's  iterations (see Reinke, Schleicher, and M. Stoll
   \cite{RSSa,Ra}).   
    
 Serious  competition to MPSolve came in 2001 from the package EigenSolve by  Fortune \cite{F02}, but the later version of MPSolve \cite{BR14} combines the benefits of both packages. An implementations of Newton's iterations for approximation of all  roots by   Schleicher and  R. Stoll in \cite{SS17}  competed with MPSolve in 2017, but then MPSolve has regained  upper hand by means of incorporation of FMM. As we showed in  the previous subsection, however, such implementation is simpler 
 and can be more efficient for Newton's than Ehrlich's
 iterations. The Ehrlich-Newton competition seems to resume in 2024 (see \cite{B24,RSS24,MV24}).
  
%------------------------------------------------------------------------------            
 
\subsection{Computation of a disc  covering all zeros of a black box polynomial}\label{sdscfrall}

Customary initialization of Ehrlich's and Weierstrass's iterations involves 
approximation of all root radii, for which we can $d$ times apply our algorithm of Sec. \ref{sextrrk}, 
 greatly accelerated in the next section if $p(x)$ is given with its coefficients. In that case we immediately estimate the largest root radius $r_1$ 
to initialize Newton's  
iterations of \cite{SS17,S23,RSS24}.
In this subsection we 
obtain such estimates
for a black box polynomial.

\subsubsection{Introductory comments}
Given the coefficients of a polynomial $p$ of a degree $d$, we can readily compute its largest and smallest root radii $r_1$  and $r_d$, respectively (cf. \cite{F916,H74}).  
Next we compute  a range $[\rho_-,\rho_+]$ that  brackets all root radii or, equivalently, compute an upper estimate for the largest  root radius $r_1$ and  a low estimate for the smallest  root radius  $r_d$
of a black box polynomial $p(x)$ of a degree $d$. 

 Its solution is required, e.g., in order to initialize  algorithms  for all $d$ roots  by means of  functional iterations.
 
 Suppose that we have an algorithm that computes a disc $D(0,\rho_-)$ containing no zeros of a black box polynomial $p(x)$; clearly,
 $\rho_-\le r_d$. By  applying this algorithm to the reverse polynomial  $p_{\rm rev}(x)$ we  obtain  the upper bound
$1/\rho_{-,{\rm rev}}$ on $r_1$.

It remains to describe computation of the disc $D(0,\rho_-)$. Empirically we can do this by applying the algorithms of \cite{P22b,GPS23}, but next we study formally  some alternative algorithms.

To obtain a range $[\rho_-,\rho_+]$ that  brackets all root radii, we need  upper estimate for the  root radii $r_1$ and  $r_d$, but next we only  estimate $r_d$ because we can extend them to $r_1$ by estimating $r_d$ for the reverse polynomial  $p_{\rm rev}(x)$.
We obtain a low bound on $r_d$ by computing a disc that contains no zeros of a black box polynomial $p(x)$. 

 In the rest of this section $p(x)$ denotes a black box polynomial.

\subsubsection{Probabilistic solution} 

Seek a disc $D(c,\rho)$  containing no zeros of $p(x)$ in its interior, where we can choose any positive $\rho$ . 
Sample a random $c$
in the unit disc $D(0,1)$ under the uniform probability distribution
in $D(0,1)$. 

The disc has area $\pi$, 
while  the union of the $d$ discs $D(x_j,\rho)$, for $j=1,2,\dots,d$, has    
 area at most $\pi d\rho^2$.
 Therefore, $c$ lies in that union 
  with a probability  at most
  $d\rho^2$,
  which we can decrease 
  at will by decreasing $\rho$. Unless $c$ lies in such a union,
  we obtain a desired disc $D(c,\rho)$.
  
  By applying our e/i tests of Secs. \ref{seirndalg} -- \ref{smdfd}
  to this disc, we  can verify correctness of its choice.
   
   \subsubsection{Deterministic solution}  
   
We can  deterministically compute a desired disc $D(0,\rho_-)$ at a higher computational cost, based on a soft e/i test. 

Namely,  apply  1-test to 
$d+1$ triples of parameters  
$c_i$, $\theta_i$, and $\rho_i$, for
$i=0,1,\dots,d$, such that the discs
$D(c_i,\theta_i\rho_i)$
have no overlap pairwise.
Then, clearly,  at most 
$d$ discs contain roots because $p(x)$ 
has only $d$ zeros, and so at least one of the $d+1$ discs contains no roots.                                                        
  Deterministic e/i tests
  of Secs. \ref{sbscei} and \ref{smdfd} involve
  evaluation of $p(x)$ at
$2d+2$ points. We obtain a desired disc $D$ by applying these tests to at most $d+1$ discs and thus computing $p(x)$ at
at most $2(d+1)^2$ points.
 
%------------------------------------------------------------------------------
  
\section{Fast approximation of the $d$ root radii with applications}\label{sexclexc2} 
   
%-------------------------------------------------
  
 The customary initialization recipes of
 Weierstrass's and Ehrlich's iterations 
 involve the $d$ root radii at the origin
 and consequently require to use the coefficients of $p$. To initialize
 Ehrlich's iterations  MPSolve  first applies Bini's heuristic recipe of \cite{B96}, reused in  \cite{BF00,BR14}, for the approximation of the $d$ root radii.
Given such approximations MPSolve  defines $k$ concentric annuli 
$A(c,\rho_i,\rho_i')$ and $k$ integers $m_i$ such that
$\#(A(c,\rho_i,\rho_i'))=m_i$  for  $i=1,\dots,k$, $k\le d$,  and  $\sum_{i=1}^km_i=d$. Then MPSolve initializes Ehrlich's iterations at $d$ points $z_j$, $j=1,2,\dots,d$,  exactly
 $m_i$
of them being equally spaced on the circle $C(c,(\rho_i+\rho_i')/2)$.
This heuristic initialization  recipe turns out  to be  highly efficient.  

The algorithm supporting the next theorem also approximates the  $d$ root radii. It appeared much earlier than Bini's, is deterministic, formally supported, and faster than Bini's. The algorithm has  been devised  and briefly analyzed in \cite{S82} for fast approximation of a single root radius and then readily extended
in \cite[Sec. 4]{P87} (cf. also \cite[Sec. 5]{P89}) 
 to approximation of all root radii of a polynomial. As in \cite{B96,BF00,BR14} it uses all $d$  coefficients of $p$. A variation of this algorithm in \cite{IP21} supports the following theorem.

%------------------------------------------------------------------------------
  
\begin{theorem}\label{thrr}  \cite[Prop. 3]{IP21}.   
 Given the coefficients of a polynomial $p=p(x)$, one can approximate all the
 $d$ {\em root radii} $|x_1|,\dots,|x_d|$  within the relative error bound 
$4d$ at a Boolean cost in $O(d\log(||p||_{\infty}))$. 
\end{theorem} 

\cite{IP21} extends the supporting algorithm to approximation of all root radii within a positive relative error bound $\Delta$
by performing  
$\lceil\log_2\log_{1+\Delta}(2d)\rceil$ DLG root-squaring iterations 
(\ref{eqdnd}).

Combine 
Thm. \ref{thrr} with estimates for the Boolean cost of performing these iterations implicit in the proof of 
\cite[Cor. 14.3]{S82} and obtain

%------------------------------------------------------------------------------
 
\begin{corollary}\label{corr}   
 Given the coefficients of a polynomial $p=p(x)$ and a positive $\Delta$, one can approximate all the
 $d$ {\em root radii} $|x_1|,\dots,|x_d|$  within a relative error bound 
$\Delta$ in $\frac{1}{d^{O(1)}}$ at a Boolean cost in $O(d\log(||p||_{\infty})+d^2\log^2(d))$. 
\end{corollary} 
The algorithm  supporting the corollary  
 and said to be 
{\bf Alg. \ref{thrr}a}  computes at a low cost $d$ {\em suspect annuli} 
$A(0,r_{j,-},r_{j,+})$  for $j=1,\dots,d$
(some may  
overlap),  whose union $\mathbb U$ contains all $d$ roots. 
The algorithm was the point of departure
for amazing progress 
in \cite{IM23}, which extends the  ingenious  algorithm
of \cite{M21} for multipoint polynomial evaluation.  

We conclude this subsection with a list of {\bf some other benefits of fast approximation of the $d$ root radii.}

%\noindent
(i) In a subdivision step one can skip testing exclusion and discard a  
square unless it intersects the set $\mathbb U$. 
 
(ii) The search for real roots can  be narrowed to the intersection $\mathbb I$ of $\mathbb U$ with the real axis, which consists of $2m$ segments of small length  
(see Fig. \ref{figrr}).
 \cite{PZ15} has tested a real root-finder that approximates the roots in  $\mathbb I$ by initiating Newton's iterations at the centers of the segments. According to these tests  the algorithm is highly efficient for 
  the approximation of all well-isolated
 real roots.  One can extend Newton's iterations at the centers of the segments by applying also  our root-radii algorithm of Sec. \ref{sextrrk}.
  By complementing  the root radii approach with some advanced novel techniques, the paper \cite{IP21} has   noticeably accelerated the real root-finder \cite{KRS16}, which is currently   user's choice. 
 %One can extend the progress to fast approximation of near-real roots by combining root radii computation with the techniques of Sec. \ref{spr4}.
 
(iii) Initialization based on  Alg. \ref{thrr}a should improve global convergence 
of Ehrlich's, Weierstrass's, and other 
functional iterations for root-finding
as well as of Kirrinnis's factorization algorithm of \cite{K98}. 

(iv) Suppose that Alg. \ref{thrr}a has computed
an annulus $A(0,\rho_-,\rho_+)$ containing a small 
number $u$ of the zeros of $p$ and isolated from its external zeros of $p$. 
Then we can extend the root-finding recipes of \cite{S82} from a disc to an annulus as follows. 
(a) Readily 
compute the first $u$ power sums of the zeros of $p$ lying in 
the discs $D(0,\rho_+)$ and $D(0,\rho_-)$ and then (b) their differences 
equal to the power sums
of the zeros of $p$ lying in the annuls  $A(0,\rho_-,\rho_+)$.
(c) By solving the system of Newton's identities,  compute the coefficients of the factor $f$ of the $u$th degree that shares with $p$ its root set in the annulus (cf. \cite[Sec. 12]{S82}). 
(d) Approximate the zeros of $f$, at a lower cost since $u$ is small.

We can apply this recipe to every annulus containing a small 
number of zeros of $p$  and isolated from its external zeros and finally approximate the remaining wild zeros of $p$ by applying  implicit deflation
(cf. (\ref{eqimpldfl})).
 
\begin{figure} 
\begin{center}
\begin{tikzpicture}
\draw[->,thick] (-4,0)--(4,0) node[right]{$x$};
\draw[->,thick] (0,-3)--(0,3) node[above]{$y$};
\draw[black] (0,0) circle (2.5cm);
\draw[black] (0,0) circle (2.6cm);
\draw[black] (0,0) circle (1.5cm);
\draw[black] (0,0) circle (1.6cm);
\draw[->] (3, 2.5) node[above]{may contain roots} -- (0.77, 1.34);
\draw[->] (3, 2.5) -- (2.24, 1.13);
\draw[->] (3, 2.5) -- (2.55, 0);
\draw[->] (-3, 2.5) node[above]{may contain roots} -- (-1.55, 0);
\draw[->] (-3, 2.5) -- (-1.80, 1.80);
\draw[->] (-3, 2.5) -- (-2.55, 0);\end{tikzpicture}
\end{center}
\caption{Two approximate root radii define two narrow suspect annuli and four suspect segments of the real axis.}
\label{figrr}
\end{figure} 

%------------------------------------------------------------------------------}
 
\medskip
 
%\noindent 
%{\bf Acknowledgements:}
%This research has been supported by  NSF Grants  CCF--1563942 and CCF--1733834 and by PSC %CUNY Award  66720-00 54. 
%We are grateful to
% the reviewers for helpful %thoughtful comments.

%--------------------------------------------------

%-------------------------------------------------------------------


\begin{thebibliography}{\hspace{1cm}}

%------------------------------------------------------------------------------
%------------------------------------------------------------------------------

\bibitem{A00}
 Ahlfors, L.: Complex Analysis. McGraw-Hill series in Mathematics, McGraw-Hill  (2000), ISBN 0-07-000657-1
 
%------------------------------------------------------------------------------
%------------------------------------------------------------------------------

%\bibitem {A12} 
%Abbott, J.:
%Quadratic interval refinement for real roots. \\
%arXiv:1203.1227, Submitted on 6 Mar 2012.\\
%Originally presented as a poster at ISSAC 2006

%------------------------------------------------------------------------------

\bibitem{A73}
 Aberth, O.: Iteration methods for finding all zeros of a polynomial simultaneously. Mathematics of Computation 
{\bf 27} (122), 339--344 (1973)

%------------------------------------------------------------------------------
%------------------------------------------------------------------------------

\bibitem{AKY02}
Atanassova, L., Kyurkchiev, N., Yamamoto, T.:
Methods for computing all roots of a polynomial simultaneously
- known results and open Problems. 
Computing [Suppl] {\bf 16}, 23--35 (2002)

%------------------------------------------------------------------------------
%------------------------------------------------------------------------------

%\bibitem{AMVW15}
% Aurentz, J.L.,  Mach, T., Vandebril, R.,  Watkins D.S.: Fast and backward stable computation of roots of polynomials. SIAM Journal on Matrix Analysis and Applications, 36(3): 942--973 (2015).
%Part II in arXiv:1611.02435, July 2018

%------------------------------------------------------------------------------
%------------------------------------------------------------------------------

%\bibitem {AW21}
%Alman, J., Williams, V. V.: A refined laser method and faster matrix multiplication.  
%In ACM-SIAM SODA 2021,  522–539. SIAM  (2021) 
%arXiv:2010.05846.

%------------------------------------------------------------------------------

\bibitem {B40}
Bell, E.T.: The Development of Mathematics, McGraw-Hill, New York (1940)

%------------------------------------------------------------------------------

\bibitem {B83}
Barnett, S.: 
Polynomial and Linear Control Systems.
Marcel Dekker, New York (1983)
doi: 10.1112/blms/16.3.331

%------------------------------------------------------------------------------
%------------------------------------------------------------------------------

%\bibitem{B84}
%Bini, D.A.:
%Parallel solution of certain Toeplitz Linear systems.
%SIAM J. Comput. {\bf 13} (2), 268--279 (1984)
%doi: 10.1137/0213019
%------------------------------------------------------------------------------

%------------------------------------------------------------------------------

\bibitem{B96}
Bini, D.A.:
 Numerical computation of polynomial zeros by means of Aberth's method. Numerical Algorithms {\bf 13} (3--4), 179--200 (1996)

%------------------------------------------------------------------------------

%\bibitem{Ba}
%Bini, D.A.: private communication (2019)


%------------------------------------------------------------------------------

\bibitem{B24}
Dario Bini,
Numerical computation of the roots of Mandelbrot polynomials: an experimental analysis
Mathematics, Electronic Transactions on Numerical Analysis,
{\bf 61}, pp. 1–27, 2024.
ISSN 1068–9613.
DOI: 10.1553/etna\_vol61s1
 also arXiv:2307.12009 (6 Sep 2023)

%------------------------------------------------------------------------------

%\bibitem{BBEGG10}
% Bini,  D.A., Boito, P., Eidelman, Y., Gemignani, L., Gohberg, I.:
%A fast implicit QR eigenvalue algorithm for companion matrices.
%Linear Algebra and its Applications,
%432 (2010), 2006-2031 
 
%------------------------------------------------------------------------------
%------------------------------------------------------------------------------

\bibitem{BCSS98}
Blum, Lenore; Cucker, Felipe; Shub, Mike; Smale, Steve:  Complexity and Real Computation. Springer New York (1998). doi:10.1007/978-1-4612-0701-6. ISBN 978-0-387-98281-6. S2CID 12510680

%------------------------------------------------------------------------------

\bibitem{BF00}
Bini, D.A., Fiorentino, G.:
Design, analysis, and implementation of
a multiprecision polynomial rootfinder. Numerical
Algorithms {\bf 23}, 127--173 (2000)
doi: 10.1023/A:1019199917103

%------------------------------------------------------------------------------

\bibitem{BGP04}
Bini, D.A., Gemignani, L., Pan, V.Y.: 
Inverse power and Durand/Kerner iteration for univariate polynomial root-finding.
Comput. Math. Appl. {\bf 47}(2/3),
 447--459 (2004)\\
doi: 10.1016/S0898-1221(04)90037-5


%------------------------------------------------------------------------------
%------------------------------------------------------------------------------
%------------------------------------------------------------------------------

\bibitem {BJ15}
Box, G.E.P., Jenkins, G.M.:
Time Series Analysis: Forecasting and Control.
Holden-Day, San Francisco,
California, fifth edition (2015)
doi: 10.1111/jtsa.12194

%------------------------------------------------------------------------------

\bibitem{BM75}
Borodin, A., Munro, I.:
The Computational Complexity of Algebraic and Numeric Problems,
Elsevier computer science library (1975)

%------------------------------------------------------------------------------

\bibitem{BN13}
 Bini, D.A., Noferini, V.:
Solving polynomial eigenvalue problems by means of the Ehrlich-Aberth method.  Linear Algebra and its Applications, {\bf 439}(4),  1130-1149 (2013)
DOI:10.1016/J.LAA.2013.02.024
 
%------------------------------------------------------------------------------
%------------------------------------------------------------------------------

\bibitem{BP94}
 Bini, D.A., Pan, V.:
Polynomial and Matrix Computations, {\bf 1}, Fundamental Algorithms (XVI + 415 pages), 
%in the series ``Progress in Theoretical Computer Science" (R.V. Book editor),
 Birkh{\"a}user, Boston (1994)

%------------------------------------------------------------------------------

\bibitem{BR14}
Bini, D.A., Robol, L.:
Solving secular and polynomial equations: a multiprecision algorithm.
J. Comput. Appl. Math.
{\bf 272}, 276--292 (2014)
doi: 10.1016/j.cam.2013.04.037

%------------------------------------------------------------------------------
%------------------------------------------------------------------------------

\bibitem{BSSXY16}
Becker, R., Sagraloff, M., Sharma, V., Xu, J., Yap, C.:
Complexity analysis of root clustering for a complex polynomial. In: Intern. Symp. Symb. Algebraic Computation (ISSAC 2016), pp. 71--78. ACM Press, New York (2016)
doi: 10.1145/2930889.2930939

%------------------------------------------------------------------------------

\bibitem{BSSY18}
Becker, R., Sagraloff, M., Sharma, V., Yap, C.: A near-optimal subdivision algorithm for complex root isolation based on the Pellet test and Newton iteration.
J. Symbolic Computation
{\bf 86}, 51--96 (2018)
doi: 10.1016/j.jsc.2017.03.009

%------------------------------------------------------------------------------
%------------------------------------------------------------------------------

\bibitem{BT90} 
Ben-Or, M., Tiwari, P.: Simple algorithms for approximating all roots of a polynomial with real
roots. J. Complexity, {\bf 6, 4}, 417--442 (1990)

%------------------------------------------------------------------------------
%------------------------------------------------------------------------------

\bibitem{C46}
 Cram{\'e}r, H.:
Mathematical Methods of Statistics.
Princeton Mathematical
Series, 8, pp. xvi+575  (Princeton, N.J.: Princeton
University Press, 1946)

%------------------------------------------------------------------------------

%\bibitem{C00}
%Cipra, B. A.: The best of the 20th century: Editors name top 10 algorithms. SIAM News  {\bf 33,~4}, page 1
%(May 2000).

%------------------------------------------------------------------------------
%---------------------------------------------------------------------------------------------------------------

\bibitem{CGR88}
Carrier, J., Greengard, L.,  Rokhlin, V.:  A fast adaptive multipole algorithm for particle simulations. SIAM Journal on Scientific and Statistical Computing {\bf 9}, 669-686 (1988)

%------------------------------------------------------------------------------

\bibitem{CLOS97}
Cox, David A.; Little, John; O'Shea, Donald:  Ideals, Varieties, and Algorithms: An Introduction to Computational Algebraic Geometry and Commutative Algebra. Springer (1997). ISBN 0-387-94680-2.

%------------------------------------------------------------------------------
%------------------------------------------------------------------------------

\bibitem{CS99}
Cucker, F.,  Smale, S.: Complexity estimates depending on condition and round-off
error. J. ACM, 46(1):113--184 (1999)

%------------------------------------------------------------------------------

\bibitem{D60}
Durand, E.: 
Solutions num{\'e}riques des {\'e}quations alg{\'e}briques. Tome 1: Equations du type $F(X)=0$; Racines d'un polynome. Masson, Paris  (1960)

%-----------------------------------------
%------------------------------------------------------------------------------

\bibitem{DGR96}
Dutt, A., Gu, M., Rokhlin, V.:
Fast Algorithms for polynomial
interpolation, integration and differentiation.
SIAM J. Numer. Anal. {\bf 33(5)}, 1689--1711 
(1996)
%https://urldefense.com/v3/__https://doi.org/10.1137/0733082__;!!M2cvx14AM25G!jqCLPpR77bkHIktVmyTNu4SJ_zaxsYIy1aKR3FTg0O4hYz7Mym8CZkMmFoJ0eTNYgTb0CeQ$ 

%------------------------------------------------------------------------------
%------------------------------------------------------------------------------

\bibitem{DJLZ97} 
Du, Q., Jin, M., Li, T.Y., Zeng, Z.: The quasi-Laguerre iteration, Math. Computation, {\bf 66(217)},
345–361 (1997)

%------------------------------------------------------------------------------
%------------------------------------------------------------------------------

%\bibitem{DL67}
%Delves, L.M., Lyness, %J.N.:
%A numerical method for locating the zeros of an analytic function.
%Math. Comput. {\bf 21}, 543--560 (1967)
%doi: 10.1090/S0025-5718-1967-0228165-4

%---------------------------------------------------------------
%------------------------------------------------------------------------------

\bibitem{DM89}
Demeure, C.J., Mullis, C.T.:
The Euclid algorithm and
the fast computation of cross-covariance and autocovariance
sequences.
IEEE Trans. Acoust, Speech, Signal Processing
\textbf{37}, 545--552 (1989)
doi: 10.1109/29.17535

%---------------------------------------------------------------

\bibitem{DM90}
Demeure, C.J., Mullis, C.T.:
A Newton--Raphson method
for moving-average spectral factorization using the Euclid algorithm.
IEEE Trans. Acoust, Speech, Signal Processing
\textbf{38}, 1697--1709 (1990) 
doi: 10.1109/29.60101

%---------------------------------------------------------------
%---------------------------------------------------------------------------------------------------------------

%\bibitem{DWZ22}
%Duan, R., Wu, H., Zhou, R.: Faster Matrix Multiplication via Asymmetric Hashing, arXiv:2210.10173 (2022)

%----------------------------------------------------------------------------  
%------------------------------------------------------------------------------

\bibitem{E67}
Ehrlich, L.W.:
A modified Newton method for polynomials.
{\em Communs. ACM} {\bf 10}, 107--108 (1967)
doi: 10.1145/363067.363115

%------------------------------------------------------------------------------
%------------------------------------------------------------------------------

\bibitem{EK95}
Edelman, A., Kostlan, E.:
How many zeros of a random polynomial are real?
{\em Bull. Amer. Math. Soc.} {\bf 32}, 1--37  (1995) 
%DOI: https://urldefense.com/v3/__https://doi.org/10.1090/S0273-0979-1995-00571-9__;!!M2cvx14AM25G!jqCLPpR77bkHIktVmyTNu4SJ_zaxsYIy1aKR3FTg0O4hYz7Mym8CZkMmFoJ0eTNYibsD9gk$ 

%------------------------------------------------------------------------------
%------------------------------------------------------------------------------

\bibitem{EPT14}
Emiris, I.Z., Pan, V.Y., Tsigaridas, E.:
Algebraic Algorithms. In:
Tucker, A.B., Gonzales, T., Diaz-Herrera, J.L. (eds.) Computing Handbook (3rd edition), Vol. {\bf I}: Computer
Science and Software Engineering, Ch. 10, pp. 10-1 -- 10-40. Taylor and Francis Group (2014)

%------------------------------------------------------------------------------
%----------------------------

\bibitem {F916}
Fujiwara, Matsusaburo:
Uber die Mittelkurve zweier geschiossenen konvexen 
kurven in Being auf einen Punkt.
In: Tohoku Matgematical Journal First series.
 Volume 10 pp. 99 - 103 (1916)

%---------------------------------------------------------------------------------------------------------------
%---------------------------------------------------------------------------------------------------------------

\bibitem{F72}
Fiduccia, C. Polynomial evaluation via the division algorithm the fast fourier transform
revisited. In Proceedings of the Fourth Annual ACM Symposium on Theory of Computing,
STOC ’72, page 88–93, New York, NY, USA, 1972. ACM Press, NY
(1972)

%------------------------------------------------------------------------------

\bibitem{F02}
Fortune, S.: An iterated eigenvalue algorithm for approximating roots of univariate polynomials. In:
J. of Symbolic Computation, {\bf 33, 5}, 627--646 (2002)  
doi: 10.1006/jsco.2002.0526.
Proc. version in
 Proc. Intern. Symp. on Symbolic and Algebraic Computation
(ISSAC'01), 121--128, ACM Press, New York (2001)
 
%------------------------------------------------------------------------------

\bibitem{G96}
 Gourdon, X.: Combinatoire, Algorithmique et Géométrie des Polynomesole Polytechnique. PhD Thesis, Paris 1996,  152 pages (in French). 


%----------------------------------------------------------------------------

%\bibitem{GG13}
% von zur Gathen, J., Gerhard, J., Modern Computer Algebra. Cambridge University Press, Cambridge,
%UK, 2003 (second edition), 2013 (third edition). 
 
%---------------------------------------------------------------
%------------------------------------------------------------------------------
%----------------------------------------------------------------------------

\bibitem{GL13}
  Golub, G.H.,  Van Loan, C.F.: Matrix Computations. The Johns Hopkins University Press, Baltimore, Maryland (2013) (fourth edition).
  
%----------------------------------------------------------------------------

\bibitem{GPS23}
 Go, Soo; Pan, V.Y.,
 Soto, P.: Root-squaring for root-finding.
  In: Proceedings,
Computer Algebra in Scientific Computing,
25th International Workshop, CASC 2023, Havana, Cuba, August 28 – September 1, 2023
(François Boulier, Matthew England, Ilias Kotsireas, Timur M. Sadykov, Evgenii V. Vorozhtsov editors),
Lecture Notes in Computer Science (LNCS),{\bf 14139},
 107-127,  Springer Nature Switzerland AG 2023 (2023)
ISSN 0302-9743 ISSN 1611-33495 (electronic)
ISBN 978-3-031-41723-8 ISBN 978-3-031-41724-5 (eBook)
https://doi.org/10.1007/978-3-031-41724-5

%----------------------------------------------------------------------------

\bibitem{GR87}
 Greengard, L., Rokhlin,  V.:
A fast algorithm for particle simulation,
{\em Journal of Computational Physics}, {\bf 73}, 325--348 (1987)

%---------------------------------------------------------------

\bibitem{H59}
Householder, A.S.:
Dandelin, Lobachevskii, or Graeffe?
{\em Amer. Math. Monthly}
\textbf{66}, 464--466 (1959)
%doi: 10.2307/2310626

%----------------------------------------------------
%---------------------------------------------------------------------------------------------------------------

\bibitem{H74}
Henrici, P.: Applied and Computational Complex Analysis. Vol. 1:
Power Series, Integration, Conformal Mapping, Location of Zeros. Wiley, NY (1974)

%---------------------------------------------------------------------------------------------------------------

\bibitem{HH21}
Harvey, David; van der Hoeven, Joris: Integer multiplication in time $O(n\log n)$. {\em Annals of Mathematics. Second Series} {\bf 193 (2)} 563–617  (2021). doi:10.4007/annals.2021.193.2.4

%------------------------------------------------------------------------------
%------------------------------------------------------------------------------

\bibitem{HKI10}
Hristov, V.,  Kyurkchiev, N., Iliev, A.: Global convergence properties of the SOR-Weierstrass method. In: Poc, of Numerical Methods and Applications - 7th International Conference, NMA 2010, Borovets, Bulgaria, August 20-24, 2010, pages 437 -- 444  (2010)

DOI: 10.1007/978-3-642-18466-6\_52
Issn: 0302-9743

%------------------------------------------------------------------------------
%---------------------------------------------------------------------------------------------------------------

%\bibitem{HL18}
 %van der Hoeven, J., Lecerf, G.: Fast multivariate multi-point evaluation revisited (2018)
  %hal: 01848571
  
%------------------------------------------------------------------------------

%\bibitem{HL20}
van der Hoeven, J.,  %Lecerf, G.: Ultimate complexity for numerical algorithms. {\em ACM Commun. Comput. Algebra} {\bf 54(1)}, 1 -- 13 (2020)

%------------------------------------------------------------------------------

\bibitem{HS23}
Hadas, Tor; Schwartz, Oded:
Towards Practical Fast Matrix Multiplication based on Trilinear
Aggregation, Proceedings of ACM SIGSAM ISSAC 2023,  pp. 289 -- 279, ACM Press, NY (2023)

%------------------------------------------------------------------------------

\bibitem{HSS01}
Hubbard, J., Schleicher, D., Sutherland, S.: (2001)
How to find all roots of complex polynomials by Newton's method.
Invent. Math. \textbf{146}, 1--33  (2001)

%------------------------------------------------------
%------------------------------------------------------------------------------

\bibitem{IM23}
Imbach, R., Moroz, G.:
Fast evaluation and root finding for polynomials with floating-point coefficients, In:
 Proceedings of the 2023 International Symposium on Symbolic and Algebraic Computation, July 2023, Pages 325–334 \\
 https://doi.org/10.1145/3597066.3597112

%---------------------------------------------------------------------------------------------------------------

\bibitem{IP20}
Imbach, R., Pan, V.Y.:
New progress in univariate polynomial
root-finding. In: Procs. of ACM-SIGSAM ISSAC 2020,
pages 249 -- 256, July 20-23, 2020, Kalamata, Greece, ACM Press (2020). 
ACM ISBN 978-1-4503-7100-1/20/07
%https://urldefense.com/v3/__https://doi.org/10.1145/3373207.3403979__;!!M2cvx14AM25G!jqCLPpR77bkHIktVmyTNu4SJ_zaxsYIy1aKR3FTg0O4hYz7Mym8CZkMmFoJ0eTNY89MayFE$ . 
%to ISSAC 2020. 

%---------------------------------------------------------------------------------------------------------------

\bibitem{IP20a}
Imbach, R., Pan, V.Y.:
New practical advances in polynomial root clustering.
LNCS {\bf 11989}, Ch. 11, pp. 1--16 (2020)
In Book: Mathematical Aspects of Computer and Information Sciences
(MACIS 2019), D. Salmanig et al (Eds.), Springer Nature Switzerland,  DOI:$10.1007/978-3-030-43120-4\_11$

%---------------------------------------------------------------------------------------------------------------
%------------------------------------------------------------------------------

\bibitem{IP21}
Imbach, R.,  Pan, V.Y.:
Root radii and subdivision for polynomial root-finding.  In: Computer Algebra in Scientific Computing (CASC'21), Springer Nature Switzerland AG 2021, F. Boulier et al. (Eds.): CASC 2021, LNCS 12865, pp. 1–21, 2021.

https://doi.org/$10.1007/978-3-030-85165-1\_9$

Also arXiv:2102.10821 (22 Feb 2021)

%------------------------------------------------------------------------------

\bibitem{IP22}
Imbach, R., Pan, V.Y.:
Accelerated subdivision for clustering roots of polynomials
given by evaluation oracles. In: CASC'22, Springer Nature Switzerland AG 2022, F.
Boulier et al. (Eds.): 
 LNCS {\bf 13366},  143--164  (2022)

%---------------------------------------------------------------------------------------------------------------

\bibitem{IPY18}
Imbach, R., Pan, V.Y., Yap, C.:
Implementation of a near-optimal complex root clustering algorithm. 
In: Proc. Int. Congress Math Software (ICMS 2018),
Davenport, J.H., Kaers, M., Labahn, G., Urban, G. (eds.), 
LNCS  {\bf 10931} 235--244, Springer, Cham   
(2018)
%doi: 10.1007/978-3-319-96418-8 28

%-------------------------------------------------------------------------------
%-------------------------------------------------------------------------------

%\bibitem{JS17}
%Jindal, G.,  Sagraloff, M.: Efficiently computing real roots
%of sparse polynomials. In:  Proc. of ISSAC 2017, pages 229-236,
% ACM Press, NY (2017) ~~~~
%DOI: 10.1145/3087604.3087652 %\\
%Also in arXiv: 1704.06979, 23 Apr 2017

%-------------------------------------------------------------------------------
%-------------------------------------------------------------------------------

\bibitem{K43}	
 Kac, M.: On the average number of real roots of a random algebraic equation. {\em Bull. Amer.
Math. Soc.} {\bf 49}, 314--320 and 938  (1943)

%-------------------------------------------------------------------------------

\bibitem{K66}	
 Kerner, 		
I. O.:
Ein Gesamtschrittverfahren zur Berechung der
Nullstellen von Polynomen, {\emph{Numerische Math.}}  {\bf 8},
290--294 (1966)

%-------------------------------------------------------------------------------

%-------------------------------------------------------------------------------

\bibitem{K98}	
Kirrinnis, P.:
Polynomial factorization and partial fraction decomposition by simultaneous Newton's iteration.
Journal of Complexity \textbf{14}, 378--444 (1998)
%doi: 10.1006/jcom.1998.0481

%------------------------------------------------------------------------------

\bibitem{K81/97}
Knuth, D.E.: The Art of Computer Programming, volume 2, Addison-Wesley, 1981 (second edition), 1997 (third edition)

%------------------------------------------------------------------- 

%-------------------------------------------------------------------

\bibitem{KRS16}
Kobel, A., Rouillier, F., Sagraloff, M.:
Computing real roots of real polynomials ... and now for real! In: Intern. Symp. Symb. Algebraic Computation (ISSAC 2016), pp. 301 -- 310. ACM Press, New York (2016)
doi: 10.1145/2930889.2930937
 
%-------------------------------------------------------------------
%-------------------------------------------------------------------------------

\bibitem{KS94}
Kim, M.-H., Sutherland, S.: Polynomial root-finding algorithms and
branched covers. SIAM Journal of Computing {\bf 23}, 415--436 (1994)

%-------------------------------------------------------------------------------
%-------------------------------------------------------------------
 
\bibitem{KS20} 
 Karstadt, E., Schwartz, O.: Matrix multiplication, a little faster. {\em J. ACM} {\bf 67}(1), 1--31  (2020) doi:10.1145/3364504, MR 4061328, S2CID 211041916

%-------------------------------------------------------------------

\bibitem{L53}
Lorentz, G. G.: Bernstein Polynomials, University of Toronto Press (1953)

%-------------------------------------------------------------------
 
\bibitem{LPKZ20}
 Luan, Q., Pan, V.Y., Kim, W., Zaderman, V.: Faster numerical univariate polynomial root-finding by means of subdivision iterations.
 In: CASC 2020, 
F. Boulier et al. (Eds.): 
 LNCS 12291,  pp. 431--446. Springer (2020)

%-------------------------------------------------------------------
%------------------------------------------------------------------------------
 
\bibitem{LV16}
Louis, A., Vempala, S. S.:
Accelerated Newton iteration: roots of black box polynomials and matrix eigenvalues. IEEE FOCS 2016,
{\bf 1}, 732-740 (2016)
DOI: 10.1109/FOCS.2016.83;  
 also arXiv:1511.03186, January 2016
 
%-------------------------------------------------------------------

\bibitem{M64}
 Mahler, K.: An inequality for the discriminant of a polynomial.
{\em Mich. Math. J.}, {\bf 11}, 257--262 (1964)

%-------------------------------------------------------------------

\bibitem{M74}
 Mignotte, M.: 
An inequality about factors of polynomials.
Math. of  Computation \textbf{28}, 1153--1157 (1974)
doi: 10.2307/2005373

%-------------------------------------------------------------------
%-------------------------------------------------------------------

\bibitem{M95}
 Mignotte, M.:
On the distance between the roots of a polynomial. 
Applicable Algebra in Engineering, Communication and Computing, {\bf 6}, 327--332 (1995)

%-------------------------------------------------------------------

\bibitem{M07}
McNamee, J.M.:
Numerical Methods for Roots of Polynomials, Part I, XIX+354 pages.
  Elsevier (2007)
  ISBN: 044452729X;
ISBN13: 9780444527295

%------------------------------------------------------------------------------
%-------------------------------------------------------------------

\bibitem{M21}
 Moroz, G.: New data structure for univariate polynomial approximation
and applications to root isolation, numerical multipoint evaluation, and other
problems. In: 2021 IEEE 62nd Annual Symposium on Foundations of %Computer
Science (FOCS),  1090--1099 (2021). 
DOI 10.1109/FOCS52979.2021.00108 Also arxiv.org/abs/2106.02505

%------------------------------------------------------------------------------

\bibitem{MP13}
McNamee, J.M., Pan, V.Y.:
Numerical Methods for Roots of Polynomials, Part II, XXI+728 pages.
  Elsevier (2013)
ISBN: 9780444527301

%------------------------------------------------------------------------------
%------------------------------------------------------------------------------

\bibitem {MB11}
Merzbach, U.C., Boyer,  C.B.:
A History of Mathematics. Wiley, New York (2011), fifth edition,
doi: 10.1177/027046769201200316

%------------------------------------------------------------------------------

\bibitem{MB72}
 Moenck, R.,  Borodin, A.:
Fast modular transform via divison,
{\em Proceedings of 13th Annual Symposium on Switching and Automata Theory}, 90--96, IEEE Computer Society Press, Washington, DC (1972)

%------------------------------------------------------------------------------
%------------------------------------------------------------------------------

\bibitem{MV24}
Mihalache1, N., Vigneron, F.: How to split a tera-polynomial, arXiv:2402.06083 (2024)

%------------------------------------------------------------------------------

\bibitem{MZ01}
Malajovich, G.,  Zubelli, J.P.:
On the geometry of Graeffe iteration.
Journal of Complexity {\bf 17(3)}, 541--573 (2001)
%\\  
%DOI: 10.1006/jcom.2001.0585

%---------------------------------------------------------------
\bibitem{NR94}
Neff, C.A., Reif, J.H.:
An $o(n^{1 +\epsilon})$
algorithm for the complex root problem. In:
Proc. 35th Ann. IEEE
Symp. on Foundations of Comput. Sci. (FOCS '94), pp. 540--547. IEEE Computer Society Press (1994)
doi: 10.1109/SFCS.1994.365737

%------------------------------------------------------------------------------

\bibitem{P881}
Pellet, M.A.: Sur un mode de s{\'e}paration des racines des {\'e}quations et la formule de Lagrange. {\em  Bull. Sci. Math.}, {\bf 5}, 393--395 (1881)

%------------------------------------------------------------------------------

\bibitem{P66}
Pan,  V. Y.:
On methods of computing the values of polynomials, Uspekhi Matematicheskikh
Nauk {\bf 21,~1 (127)}, 103--134 (1966). (Transl. Russian Mathematical Surveys {\bf 21,~1 (127)}, 105--137 (1966).)

%-------------------------------------------------------------------
%------------------------------------------------------------------------------

\bibitem{P82}
Pan, V.,Y.:
Trilinear aggregating with implicit canceling for a new acceleration of matrix
multiplication, Computers and Mathematics (with Applications), {\bf 8}(1), 23--34 (1982) 

%------------------------------------------------------------------------------

%\bibitem{P84}
 %Pan, V. Y.:
%How to Multiply Matrices Faster. Lecture Notes in Computer Science, vol. 179 (XI + 212 pages), Springer, Berlin (1984)
%ISBN:3-387-13866-8 
%VZ

%------------------------------------------------------------------------------
%------------------------------------------------------------------------------
%------------------------------------------------------------------------------

\bibitem{P87}
Pan,  V. Y.:
Sequential and parallel complexity of approximate evaluation of polynomial zeros. Computers and Mathematics (with Applications), {\bf 14, 8,} 591--622 (1987)
 
%------------------------------------------------------------------------------

\bibitem{P89}
Pan,  V. Y.:
Fast and efficient parallel evaluation of the zeros of a polynomial having only real zeros.
Computers and Mathematics (with Applications), {\bf 17, 11}, 1475--1480 (1989)

%------------------------------------------------------------------------------


%------------------------------------------------------------------------------

\bibitem{P94}
V. Y. Pan, 
New resultant inequalities and complex polynomial factorization.
{\em SIAM Journal on Computing}, {\bf 23,~5}, 934--950, 1994.

%------------------------------------------------------------------------------

\bibitem{P95a}
V. Y. Pan,
Deterministic improvement of complex polynomial factorization 
based on the properties of the associated resultant. 
{\em Computers and Math. (with Applications)}, {\bf 30,~2}, 71--94, 1995.

%------------------------------------------------------------------------------
%------------------------------------------------------------------------------

\bibitem{P95}
Pan, V.Y.:
Optimal (up to polylog factors) sequential and parallel algorithms for
approximating complex polynomial zeros.  ACM STOC'95,
741--750 (1995)
%doi: 10.1145/225058.225292

%------------------------------------------------------------------------------

\bibitem{P96}
Pan, V.Y.:
Optimal and near optimal algorithms for approximating polynomial zeros. Computers and Mathematics (with Applications), {bf 31, 12,} 97--138 (1996).

%------------------------------------------------------------------------------

\bibitem{P97}
 Pan, V.Y.: Solving a polynomial equation: some history and recent progress, SIAM Review, {\bf 39, 2},
187–220 (1997)

%------------------------------------------------------------------------------

\bibitem{P98}
 Pan, V.Y.: Solving polynomials with Computers, American Scientist, {\bf 86}, 62-69 (January-February
1998)

%------------------------------------------------------------------------------

\bibitem{P00}
 Pan, V.Y.: Approximation of complex polynomial zeros: modified quadtree (Weyl's) construction and improved Newton's iteration. J. Complexity {\bf 16}(1), 213--264 (2000)
 doi: 10.1006/jcom.1999
 
%------------------------------------------------------------------------------
%------------------------------------------------------------------------------

\bibitem{P01}
Pan, V.Y.:
Structured Matrices and Polynomials: Unified
Superfast Algorithms. \\
Birkh\"auser/Springer, Boston/New York (2001) 
doi: 10.1007/978-1-4612-0129-8
 
%------------------------------------------------------------------------------
%------------------------------------------------------------------------------

\bibitem{P02}
Pan, V.Y.:
Univariate polynomials: near optimal algorithms for factorization and
 rootfinding.
J. Symb. Comput. {\bf 33}(5), 701--733 (2002)
doi: 10.1006/jsco.2002.0531.
Procs. version in ACM
ISSAC'01, 253--267 (2001)


%------------------------------------------------------------------------------

\bibitem{P05}
V. Y. Pan,
Amended DSeSC Power method for polynomial root-finding.
{\em Computers and Math. (with Applications)}, {\bf 49}, {\bf 9}--{\bf 10},
 1515--1524, 2005.

%------------------------------------------------------------------------------
%------------------------------------------------------------------------------

\bibitem{P15}
 Pan, V.Y.:
Transformations of matrix structures work again, 
 Linear Algebra and Its Applications, {\bf 465},    1--32 (2015) 

%------------------------------------------------------------------------------

\bibitem{P17}
 Pan, V.Y.:
Fast approximate computations with Cauchy matrices and polynomials,
Mathematics of Computation, {\bf 86}, 2799--2826 (2017) \\
DOI: https://doi.org/10.1090/mcom/3204 

%------------------------------------------------------------------------------

\bibitem{P17a}
 Pan, V.Y.:  Fast matrix multiplication and its algebraic neighborhood, SB
MATH (Mathematical Sbornik), {\bf 208 (11)}, 1161--1704 (in 
English), (2017) DOI:10.1070/SM8833

%------------------------------------------------------------------------------

\bibitem{P17b}
Pan, V.Y.:
Simple and near optimal polynomial root-finding by means of root radii
approximation. In: Kotsireas, I.S., Martinez-Moro, E. (eds.) Springer Proc. in Math. and Statistics, Ch. 23: Applications of Computer Algebra {\bf 198} AG.  Springer International Publishing (2017).  Chapter doi:  10.1007/978-3-319-56932-1 23

%------------------------------------------------------------------------------

\bibitem{P18}
 Pan, V.Y.: 
Fast feasible and unfeasible matrix multiplication,
arxiv 1804.04102 (11 Apr 2018)

%--------------------------------------------------------------------
%------------------------------------------------------------------------------

\bibitem{P19}
Pan, V.Y.:
Old and new near optimal polynomial root-finding.
In: Proceedings of the 21st International  Workshop on Computer Algebra in Scientific Computing (CASC'2019), (M.  England, W. Koepf, T.M. Sadikov, W.M. Seiler, and E. V. Vorozhtsov,   
 editors), Lecture Notes in Computer  Science {\bf 11661}, 393--411,  Springer, Nature Switzerland (2019)
doi: 10.1007/978-3-030-26831-2

%--------------------------------------------------------------------

\bibitem{P20}
Pan, V.Y.:
Acceleration of subdivision root-finding
for sparse polynomials. In: Computer Algebra in Scientific Computing, 
Springer Nature Switzerland AG 2020,
F. Boulier et al. (Eds.): 
CASC 2020, LNCS 12291,  pp. 1 -- 17  (2020) 
% https:$//%7B%5Crm doi.org}/10.1007/978-3-030-60026-6\_27$

%------------------------------------------------------------------------------

\bibitem{P22}
Pan, V.Y.:
New progress in 
polynomial root-finding. 
arXiv 1805.12042, \\ submitted on May 30, 2018, last revised on August 31, 2022.

%--------------------------------------------------------------
%--------------------------------------------------------------

\bibitem{P22b}
Pan, V.Y.:
New progress in classic area: polynomial root-squaring
and root-finding, arXiv 2206.01727 (2022)

%--------------------------------------------------------------
%------------------------------------------------------------------------------

\bibitem{P24}
Pan, V.Y.:
Near-optimal black box polynomial root-finders, Proceedings of the ACM-SIAM Symposium on Discrete Algorithms (SODA24), 
ACM Press and SIAM Publications, January 2024.
%----------------------------------------------------------------------------

%\bibitem{PG23}
% Pan, V.Y., Go, S.:
%Root-Lifting for Polynomial Root-Finding, preprint (2023)
   
%--------------------------------------------------------------

\bibitem{PGLZ23}
Pan, V.Y.; Go, Soo; Luan, Qi; Zhao, Liang: Fast approximation of polynomial zeros and matrix eigenvalues, Procs. of 13th International Symposium
on Algorithms and Complexity (CIAC 2023), M. Mavronicolas (Ed.), Lecture
Notes in Computer Science, 13898, pp. 1--17 Springer Nature, Switzerland
AG (2023), 
arXiv 2301.11268
(submitted 2022, revised 2023).

%14----------------------------------------------------------------------------

%\bibitem{PIM07}
%Pan, V.Y.,  Ivolgin, D., Murphy, B., Rosholt, R.E., Tang, Y., Wang, X., Yan,  X.:
%Root-finding with eigen-solving. Chapter 14, pages 219--245 in 
%{\em Symbolic-Numeric Computation}, (Dongming Wang and Lihong Zhi, editors),
% Birkh\"auser, Basel/Boston, (2007)

%------------------------------------------------------------------------------
%------------------------------------------------------------------------------

%\bibitem{PQZ12}
% Pan, V.Y., Qian, G., Zheng, A.:
%Real and complex polynomial root-finding via eigen-solving and randomization.
%{\em Proceedings of CASC 2012} (V. P. Gerdt et al. editors),
%{\em Lecture Notes in Computer Science}, {\bf 7442}, 283--293, Springer, Heidelberg  (2012)

%------------------------------------------------------------------------------

%\bibitem{PQZC11} 
%Pan, V.Y., Qian, G., Zheng, A., Chen, Z.: Matrix computations and polynomial root-finding with preprocessing. Linear Algebra and Its Applications, 434, 854–879 (2011)

%------------------------------------------------------------------------------

%1------------------------------------------------------------------------------
%------------------------------------------------------------------------------

%\bibitem{PSL91}
%Pan, V.Y., Sadikou, A., Landowne, E.:
%Univariate polynomial division with a remainder by means of evaluation and interpolation. In:
%Proc. 3rd IEEE Symposium on Parallel and Distributed Processing,
%pp. 212--217. IEEE Computer Society Press, Los Alamitos, California (1991)
%doi: 10.1109/SPDP.1991.218277

%------------------------------------------------------------------------------

%\bibitem{PSL92}
%Pan, V.Y., Sadikou, A., Landowne, E.:
%Polynomial division with a remainder by means of evaluation and interpolation. Inform. Process. Lett.
%{\bf 44}, 149--153 (1992)
%doi: 10.1016/0020-0190(92)90055-Z

%------------------------------------------------------------------------------

%\bibitem{PSa}
%Pan, V.Y., Svadlenka, J.: On the superfast Multipole method.
%arXiv: 1901.00377, 2019.

%------------------------------------------------------------------------------

%------------------------------------------------------------------------------

\bibitem{PT13/16}
Pan, V.Y., Tsigaridas, E.P.:
Near optimal refinement of real roots of a univariate polynomial.
J. Symb. Comput. {\bf 74}, 181--204 (2016)
doi:  10.1016/j.jsc.2015.06.009\\
%39------------------------------------------------------------------------------
 Proceedings version in: Kauers, M. (ed.) Proc. ISSAC 2013, pp. 299--306. ACM Press, New York (2013)

%------------------------------------------------------------------------------

%\bibitem{PT17}
%Pan, V.Y., Tsigaridas, E.P.: Near optimal computations with structured matrices.  Theor. Comput. Sci. {\bf 681}, 117--137 (2017) \\
%doi: 10.1016/j.tcs.2017.03.030.

%------------------------------------------------------------------------------

%\bibitem{PZ10}
% Pan, V.Y.,  Zheng, %A.:
%Real and complex polynomial root-finding with eigen-solving and preprocessing. 
%In {\em Proc. International Symp. on Symbolic and Algebraic Computation (ISSAC  2010)}, 
%pages 219--226, ACM Press, New York  (2010)

%39------------------------------------------------------------------------------

%\bibitem{PZ11}
%Pan, V.Y.,  Zheng, A.:
%New progress in real and complex polynomial root-finding.
%{\em Computers and Math. (with Applications)}, {\bf 61}, 1305--1334, 2011.

%39------------------------------------------------------------------------------




%39------------------------------------------------------------------------------

\bibitem{PZ15}
 Pan,  V.Y., Zhao,  L.:
Real root isolation by means of root radii approximation. 
In: Proceedings of the 17th International  Workshop on Computer Algebra in Scientific Computing (CASC'2015), (V. P.  Gerdt, W. Koepf, and E. V. Vorozhtsov,   
 editors), Lecture Notes in Computer  Science {\bf 9301}, 347--358,  Springer, Heidelberg, (2015)
doi: 10.1007/978-3-319-24021-3 26 

%------------------------------------------------------------------------------
%------------------------------------------------------------------------------

\bibitem{PZ17}
Pan, V.Y., Zhao, L.:
Real polynomial root-finding by means of matrix and polynomial iterations.
 Theor. Comput. Sci. {\bf 681}, 101--116 (2017)
doi: 10.1016/j.tcs.2017.03.032.
 
%------------------------------------------------------------------------------

\bibitem{R87}
Renegar, J.: On the worst-case arithmetic complexity of approximating zeros of polynomials. J. Complex. {\bf  3 (2)}, 90--113 (1987)
%doi: 10.1016/0885-064X(87)90022-7

%------------------------------------------------------------------------------

\bibitem{Ra}
 Reinke, B.:
Diverging orbits for the Ehrlich-Aberth and the Weierstrass root finders.
Procs. American Mathematical Society,
{\bf 150 (3)} 1287--1300
(2022), 
DOI: 10.1090/proc/15715 arXiv 2011.01660
(November 20, 2020) 

%------------------------------------------------------------------------------
%------------------------------------------------------------------------------

\bibitem{RSS24}  
Randig, Marvin; Schleicher,  Dierk;   
 Stoll, Robin:  Newton’s method in practice, II: The iterated refine
ment Newton method and near-optimal complexity for finding all roots of some polynomials of very
 large degrees.  {\em J. Comput. Appl. Math.} {\bf 437} 115427 (2024)  arXiv:1703.05847 (31 Dec 2017)
 
%------------------------------------------------------------------------------

\bibitem{RSSa}
 Reinke, B., Schleicher, D.,  Stoll, M.:
The Weierstrass root-finder is not generally convergent,  {\em
Mathematics of Computation,}
{\bf 92(298)} 839--866 (2023)
DOI: 10.1090/mcom/3783
arXiv:2004.04777
 (9 Apr 2020)

%------------------------------------------------------------------------------
%--------------------------------------------------------------

\bibitem{S870}
Schr{\"o}der, E.: Uber unendliche viele Algoritmen zur Auflosung der Gleichungen.
Math. Ann. {\bf 2}, 317--365 (1870)
 
%--------------------------------------------
%---------------------------------------------------------------

\bibitem{S81}
 Smale, S.:
The fundamental theorem of algebra and complexity theory, Bulletin of the American   Math. Society,  
{\bf 4 (1)}, 1--36  (1981)

%---------------------------------------------------------------

\bibitem{S82}
 Sch{\"o}nhage, A.:
The fundamental theorem of algebra in
terms of computational complexity.
Math. Dept., University of T{\"u}bingen, T{\"u}bingen, Germany
(1982)


%---------------------------------------------------------------

\bibitem{S82a}
Sch{\"o}nhage, A.:
Asymptotically fast algorithms for the numerical multiplication and division of polynomials with complex coefficients. In:
Proc. European Computer Algebra Conference (EUROCAM 1982), pp. 3--15. Computer Algebra (1982)
doi: 10.1007/3-540-11607-9 1

%---------------------------------------------------------------

\bibitem{S85}
Sch{\"o}nhage, A.: Quasi-gcd computations. Journal of
Complexity {\bf 1}, 118--137 (1985)

%------------------------------------------------------------------------------
%------------------------------------------------------------------------------

\bibitem{S05}
Storjohann, A.: The shifted number system for fast linear algebra on integer matrices, Journal of
Complexity {\bf 21}(4), 609--650 (2005)

%-------------------------------------------------
%------------------------------------------------------------------------------

\bibitem{S15}
Storjohann, A.: On the complexity of inverting integer and polynomial matrices,  Comput. Complex. {\bf 24}(4): 777--821 (2015)

%------------------------------------------------------------------------------

\bibitem{S98}
 Stewart, G. W.: Matrix Algorithms, Vol. II:  Eigensystems, SIAM (1998)

%------------------------------------------------------------------------------

\bibitem{S74}
 Strassen, V.:
Some results in a complexity theory, Proceedings of the International Congress of Mathematicians, Vancouver, 1974 (Ralph D. James, editor), Volume 1, Canadian Math. Society, pages 497--501 (1974) 

%---------------------------------------------------------------

%\bibitem{Sa}
%Schleicher, D.: private communication (2018).

%---------------------------------

\bibitem{S23}
Schleicher, D.: On the efficient global dynamics of Newton’s method for complex polynomials, Nonlinearity,
36 (2023), Art. 1349 (29 pages).
%-------------------------------------------------------------------------------

\bibitem{SS71}
Sch{\"o}nhage, A., Strassen, V.: Schnelle Multiplikation grosser Zahlen. Computing {\bf 7},  281--292 (1971)

%---------------------------------------------------------------

\bibitem{SS17}
Schleicher, D., Stoll, R.:
Newton's method in practice: finding all roots of polynomials of degree one million efficiently. Theor. Comput. Sci.  {\bf 681}, 146--166 (2017) doi: 10.1016/j.tcs.2017.03.025;
arXiv:1508.02935

%---------------------------------------------------------------

\bibitem{T98}
Tilli, P.: Convergence conditions of some methods for the
simultaneous computation of polynomial zeros. In:
Calcolo {\bf 35}, 3--15 (1998)
doi: 10.1007/s100920050005

%---------------------------------------------------------------
%---------------------------------------------------------------

\bibitem{TW14}
Trefethen, Lloyd N., Weideman, J.A.C., The exponentially convergent trapezoidal rule, SIAM Review, 385--458 {\bf 56(3)}, (2014)
DOI: 10.1137/130932132
 
%--------------------------------------------------------------

\bibitem{VD94}
Van Dooren, P.M.:
Some numerical challenges in control theory. In:
Linear Algebra for Control Theory, IMA  Vol. Math. Appl. \textbf{62}, 177 -- 189 (1994)
doi: 10.1007/978-1-4613-8419-9 12

%---------------------------------------------------------------

\bibitem{W03}
Weierstrass, K.:
Neuer Beweis des Fundamentalsatzes
    der Algebra.  Mathematische Werke,
Bd. \textbf{III}, 251--269. Mayer und Mueller, Berlin (1903)

%---------------------------------------------------------------

%\bibitem{W82}
%Werner, W.:
%Some improvements of classical iterative methods for the
%solution of nonlinear equations. In: Allgower, E.L. et al. 
%(eds.) Numerical Solution of Nonlinear
%Equations, Proc. Bremen 1980 (L.N.M. 878), 427 -- 440. %Springer,
%Berlin (1982)
%doi: 10.1007/BFb0090691

%---------------------------------------------------------------
%---------------------------------------------------------------

\bibitem{W24}
Weyl, H.: Randbemerkungen zu Hauptproblemen der Mathematik. II. Fundamentalsatz
der Algebra und Grundlagen der Mathematik. Mathematische
Zeitschrift {\bf 20}, 131--151 (1924)

%---------------------------------------------------------------
%---------------------------------------------------------------

\bibitem{W69}
Wilson, G.T.:
Factorization of the covariance generating function of a pure moving-average.
%Process,
SIAM J. Numer. Anal. \textbf{6}, 1--7 (1969)
doi: 10.1137/0706001

%---------------------------------------------------------------

\bibitem{WXXZ24}
 Vassilevska Williams, V.; Xu, Yinzhan; Zixuan Xu, Zixuan;  Renfei Zhou, Renfei:
New Bounds for Matrix Multiplication: from Alpha to Omega, 
Proceedings of the 2024 Annual ACM-SIAM Symposium on Discrete Algorithms (SODA), pages 3792 - 3835  (2024)
https://doi.org/10.1137/1.9781611977912.134
  and
arXiv:2307.07970 (2023)

%---------------------------------------------------------------

\bibitem{XXG12}
 Xia, J., Xi, Y.,  Gu, M.: A superfast structured solver for Toeplitz linear systems via randomized sampling,  SIAM J. Matrix Anal. Appl.  {\bf 33}, 837--858 (2012) 
doi:10.1137/110831982

%---------------------------------------------------------------

%---------------------------------------------------------------

%\bibitem{YB12}
%Yokota, R., Barba, L.A.:
%A tuned and scalable fast multipole method as a preeminent algorithm for exascale system,
%The International Journal of High Performance Computing Applications,
%{\bf 26,~4}, 337--346 (2012)
%https://urldefense.com/v3/__https://doi.org/10.1177/1094342011429952__;!!M2cvx14AM25G!jqCLPpR77bkHIktVmyTNu4SJ_zaxsYIy1aKR3FTg0O4hYz7Mym8CZkMmFoJ0eTNY9CkrWVE$ 

%---------------------------------------------------------------
%---------------------------------------------------------------

\end{thebibliography}
\end{document}